\newtheorem{theorem}{Theorem}
\newtheorem{lemma}{Lemma}
\newtheorem{proposition}{Proposition}
\theoremstyle{definition}
\title[Stability of Travelling Pulses]
 { Spectrum and stability of travelling pulses in a coupled FitzHugh--Nagumo equation }
\author[Q. Qiao, X. Zhang]{Qi Qiao$^\dag$ and Xiang Zhang$^\ddag$}
\address{
$^\dag$ School of Mathematical Sciences, Shanghai Jiao Tong University, Shanghai 200240, People's Republic of China\newline
\quad $^\ddag$ School of Mathematical Sciences, MOE--LSC,  and CMA-Shanghai, Shanghai Jiao Tong University, Shanghai 200240, People's Republic of China}
\email{qiaoqijs@163.com(Qiao); xzhang@sjtu.edu.cn(Zhang)
}
\begin{document}

\begin{abstract}
For a coupled slow--fast FitzHugh--Nagumo(FHN) equation derived from a reaction-diffusion-mechanics (RDM) model,  Holzer, Doelman and Kaper in 2013 studied existence and stability of the travelling pulse, which consists of two fast orbit arcs and two slow ones, where one fast segment passes the unique fold point with algebraic decreasing and two slow ones follow normally hyperbolic critical curve segments. Shen and Zhang in 2019 obtained existence of the travelling pulse, whose two fast orbit arcs both exponentially decrease, and one of the slow orbit arcs could be normally hyperbolic or not at the origin. Here we characterize both nonlinear and spectral stabilities of this travelling pulse.
\end{abstract}

\subjclass[2010]{35C07; 35B25; 35B35; 34E15; 34F10; 35B32}
\keywords{coupled FitzHugh--Nagumo equation; singular perturbation; travelling pulse; spectrum; nonlinear stability; spectral stability.\\
    $^\ddag$Corresponding author: Xiang Zhang}

\maketitle

\hspace{0.13in}
\section{Introduction}\label{sx1}
\hspace{0.05in}

The FitzHugh--Nagumo (FHN) equation is a typical reaction--diffusion equation, which was originally proposed by FitzHugh \cite{FitzHugh} in 1961 and Nagumo et al. \cite{Nagumoetal} in 1962 as a simplification of the Hodgkin--Huxley model of a nerve axon
\begin{equation}\label{FHN}
\begin{array}{ll}
u_{t}=u_{xx}+u(u-a)(1-u)-w,\\
w_{t}=\epsilon(u-\gamma w).
\end{array}
\end{equation}
with $a\in [0,1],~\gamma>0$ and $0<\epsilon\ll 1$.
Here $u(x,t)$  represents the membrane potential and $w(x,t)$ denotes the recovery variable.
{Usually, in system \eqref{FHN}, it is assumed that $\gamma$ is small enough such that it only allows a trivial steady state.}

There are many studies on the travelling pulses of FHN system, which represent the propagation of the action potential along the nerve cells.
It is well known that, if $a\in (0,1/2)$, system \eqref{FHN} has a travelling pulse solution by using a number of different techniques \cite{Carpenter,CS,CG,Hastings1976,Hastings1982,Langer}.
Hastings \cite{Hastings1976} researched that system \eqref{FHN} has two pulse solutions with different propagation speeds, which are
called the slow pulse solution and the fast pulse solution, respectively.
And Hastings \cite{Hastings1982}, by topological method, investigated the travelling pulses with oscillatory tails in the FHN system for $0<a,\epsilon\ll 1$.
Recently, by using geometric singular perturbation theory, exchange lemma and geometric blow--up techniques, Carter and Sandstede \cite{CS} also discussed the existence of travelling pulses with oscillatory tails for $0<a,\epsilon\ll 1$.

The stability of the pulse solution has been studied also by many authors \cite{CRS,Evans1972III,Evans1975,Flores,Jones1984,Yanagida}.
Such as Jones \cite{Jones1984} and Yanagida \cite{Yanagida} proved that the fast pulses are stable for $0<a<1/2$ by defining the Evans function \cite{AGJ,Evans1975}.
Carter et al. \cite{CRS} verified that the fast pulses obtained in \cite{CS} is nonlinearly stable for $a\in [0,1/2)$, where $a$ can be taken as a small parameter by applying exponential dichotomies \cite{1965book,1978book} and Lin's method \cite{BCD,Lin,Sandstede1998}.

Nash and Panfilov \cite{NP} and Panfilov et al. \cite{PKN} introduced a reaction--diffusion--mechanics (RDM) model that couples the elasticity equation, which simulated the mechanical deformation of a two--dimensional patch of myocardial fibers, to the modified FHN equations.
Recently, Holzer et al. \cite{HDK} derived the next RDM model on the real line
\begin{equation}\label{eqFHN 2013}
\begin{array}{ll}\vspace{0.12in}
U_{t}=\dfrac{1}{F}\dfrac{\partial}{\partial x}\left(\dfrac{1}{F}\dfrac{\partial U}{\partial x}\right)+k U(U-a)(1-U)-UW,\\
W_{t}= \epsilon (kU- W),
\end{array}
\end{equation}
where
\begin{equation}\label{eqFHN2013-1}
F(x,t)=\frac{1}{2}+\frac{M}{4c_{1}}+\frac{1}{2}\sqrt{\left(1+\frac{M}{2c_{1}}\right)^2-\frac{2}{c_{1}}W(x,t)}.
\end{equation}
Here $U$ represents voltage and $W$ a recovery variable. Moreover, the parameter $a$ measures the degree of excitability in the medium and $k$ is a rate constant. And the function $F(x,t)$ is used to simulate the deformation effect under some simplified conditions, where $M$ is the stress constant and $c_{1}$ is the parameter, which measures the internal energy of the deformable medium.
Then the authors established the existence of the travelling pulses for system \eqref{eqFHN 2013} by using the
geometric singular perturbation theory and  geometric blow--up techniques with $a\in (0,1/2)$. More precisely, the travelling pulse is
located in the region near a non--hyperbolic point (the maximum point). And then, they discussed the spectral stability of the travelling pulse solution by exponential dichotomies and Evans function.

Recently, Shen and Zhang \cite{SZ} also discussed the coupled FHN system
\begin{equation}\label{eqFHN}
\begin{array}{ll}\vspace{0.12in}
U_{t}=\dfrac{1}{F}\dfrac{\partial}{\partial x}\left(\dfrac{1}{F}\dfrac{\partial U}{\partial x}\right)+k U(U-a)(1-U)-UW,\\
W_{t}= \epsilon (U-\gamma W),
\end{array}
\end{equation}
where $F(x,t)$ has the expression \eqref{eqFHN2013-1}, and  
$\gamma$ is small enough such that it only allows a trivial steady state. The authors obtained existence of the travelling pulses for $a\in [0,1/2)$ by using the theories of geometric singular perturbation, for instance Fenichel's invariant manifold theorems and exchange lemma together with qualitative analysis and the center manifold theorem, as well as provided an explanation on nonexitence of travelling pulse for $a\in[1/2,1]$. It should be noted here that the travelling pulses of systems \eqref{FHN} and \eqref{eqFHN} were obtained as homoclinic orbits to the origin of the associated systems of ordinary differential equations from  \eqref{FHN} and \eqref{eqFHN} via travelling wave transformations.
Not like in \cite{HDK}, the homoclinic orbit in \cite{SZ} jumps before reaching the non--hyperbolic fold point (the maximum point) of the critical curves for $a\in[0,1/2)$. And the associated ordinary differential systems to systems \eqref{FHN} and \eqref{eqFHN} have a key difference when $a= 0$:  the trivial equilibrium of the model \eqref{FHN} overlaps with a folded point (i.e., the minimum point) of the critical curves, while
the trivial equilibrium of the model \eqref{eqFHN} overlaps with a transcritical point of the critical curves.

In this paper, our investigation will be based on the results of Shen and Zhang \cite{SZ}, and discuss the stability of the travelling pulses obtained there, by the exponential dichotomy \cite{BCD,CRS,1965book,1978book,HDK,Sandstede} and Lin's method \cite{BCD,CRS,Lin,Sandstede1998}.

We remark that one of the main techniques in this paper is the exponential dichotomy, which is stated and deeply utilized here for studying the point spectrum of the associated second order linear differential operator, which is in fact determined by the variational equation of system \eqref{eqFHN} along the travelling pulse. We must say that besides the exponential dichotomy, there are also Evans function and geometric and topological method \cite{AGJ,GJ90,GJ91,Jones1984,WZ}, Evans function and NLEP(Nonlocal Eigenvalue Problem) method \cite{DGK01,DGK02,DHV},  SLEP(Singular Limit Eigenvalue Problem) method \cite{NMIF,NS} et al.
Compared with the geometric and topological method \cite{AGJ,GJ90,GJ91,Jones1984,WZ}, the method of exponential dichotomy is analytical method and may be simpler.
The NLEP method \cite{DGK01,DGK02,DHV} will cause the nonlocal eigenvalue problem which is a hypergeometric differential equation. And similar, the SLEP method \cite{NMIF,NS}
will cause the singular limit eigenvalue problem which is not easy to solve. Thus compared with NLEP method and SLEP method, the method of exponential dichotomy may be simpler about calculation.
We remark that the Evans function and exponential dichotomy method in \cite{HDK} is not directly applicable to this article. Since, in \cite{HDK}, the back solution in some layer has an exponential decay rate as $\xi\rightarrow +\infty$ and an algebraic decay rate as $\xi\rightarrow -\infty$, which implies that the back solution does not contribute eigenvalue to the associated second order linear differential operator $\mathcal{L}_{a,\epsilon}$ in region $R_1$ seeing figure \ref{R1R2R3}.
Thus the eigenvalue of  $\mathcal{L}_{a,\epsilon}$ in region $R_1$ is unique and is trivial which is relevant with the front solution decaying with exponential rate as $\xi\rightarrow \pm\infty$.
However, in this paper, the back solution has exponential decay rates as $\xi\rightarrow \pm\infty$ which implies that the back solution contributes an eigenvalue to operator $\mathcal{L}_{a,\epsilon}$ in region $R_1$. Meanwhile, the front solution also have exponential decay rates as $\xi\rightarrow \pm\infty$. Thus it follows that there exist two eigenvalue $\lambda_0=0$ and $\lambda_1$ is region $R_1$. In order to discuss the stability of travelling pulse, we must consider the sign of $\textrm{Re}\lambda_1$ by exponential dichotomy and Lin's method.

The remaining part of this paper is organized as follows. Section \ref{sx2} introduces the existence of travelling pulses established by Shen and Zhang \cite{SZ} for $a\in[0,1/2)$.
Section \ref{sx3} states the main results on  stability of the travelling pulse, that is, the pulse is nonlinear stable for $a\in(0,1/2)$ and is spectrally stable for $a=0$.
Section \ref{sx4} is to calculate the essential spectrum of the linearization $\mathcal{L}_{a,\epsilon}$ of system \eqref{eqFHN} along the travelling pulse.
Section \ref{sx5} focuses on the point spectrum of the linearization operator, where we divide the right region of the essential spectrum  into three regions $R_{1},~R_{2}$ and $R_{3}$: The regions $R_{2}$ and $R_{3}$ do not intersect point spectrum, while the region $R_{1}$ contains at most two eigenvalues, if they exist, one is a translational eigenvalue $\lambda=0$ and the other is a negative eigenvalue.
Sections \ref{sx6} is the proofs of Theorems \ref{the.stability} and \ref{the.stability for a=0}.
The last section is an appendix, which for readers' convenience, recall some related results on exponential dichotomy and on Sturm--Liouville theorem on infinite interval.

\section{Existence of travelling pulses}\label{sx2}
\hspace{0.1in}

A \textit{travelling wave solution} of system \eqref{eqFHN} is a particular non--constant solution of the form
\begin{equation}\label{Ansatz}
U(x,t)=u(\xi),~W(x,t)=w(\xi),~\xi=x+ct,
\end{equation}
where $c$ is called wave speed. The next process is one of normal procedures for studying existence of travelling waves of slow--fast models, see e.g. \cite{HDK,SZ}.
Substituting the ansatz \eqref{Ansatz} into \eqref{eqFHN} leads to
\begin{equation}\label{eqTraWave}
\begin{split}
&\frac{1}{F}\left(\frac{1}{F}u'\right)'-cu'-f(u,w)=0,\\
&w'=\frac{\epsilon}{c}(u-\gamma w),
\end{split}
\end{equation}
where $'=\frac{d}{d\xi}$ and $f(u,w)=f(u,w;a)=ku(u-a)(u-1)+uw.$ Let $v=\frac{u'}{F},$  then system \eqref{eqTraWave} can be rewritten as a system of the
first--order ODEs, namely,
\begin{equation}\label{eqfast}
\begin{split}
u'&=F(w)v,\\
v'&=cF^2(w)v+F(w)f(u,w),\\
w'&=\frac{\epsilon}{c}(u-\gamma w).
\end{split}
\end{equation}
 System \eqref{eqfast} is called a \textit{fast system}, and in the slow scale $\widetilde{\xi}=\varepsilon \xi$, its associated \textit{slow one} reads
\begin{equation}\label{eqslow}
\begin{split}
\epsilon \dot{u}&=F(w)v,\\
\epsilon \dot{v}&=cF^2(w)v+F(w)f(u,w),\\
\dot{w}&=\frac{1}{c}(u-\gamma w),
\end{split}
\end{equation}
where the dot means the derivative with respect to $\widetilde{\xi}$.  The \textit{layer system} is
\begin{equation}\label{eqlayer}
\begin{split}
u'&=F(w)v,\\
v'&=cF^2(w)v+F(w)f(u,w),\\
w'&=0.
\end{split}
\end{equation}
Setting $\varepsilon=0$ in system \eqref{eqslow} results in the \textit{reduced system}
\begin{equation}\label{eqreduce}
\begin{split}
0&=F(w)v,\\
0&=cF^2(w)v+F(w)f(u,w),\\
\dot{w}&=\frac{1}{c}(u-\gamma w),
\end{split}
\end{equation}
The critical set of system (\ref{eqreduce}) is composed of the straight line
$$L_{0}=\{(u,v,w)\in \mathbb{R_{+}}\times \mathbb{R}\times \mathbb{R_{+}}~|~u=0,~v=0 \}$$
and the parabola
$$
S_{0}=\{(u,v,w)\in \mathbb{R_{+}}\times \mathbb{R}\times \mathbb{R_{+}}~|~w=-k(u-a)(u-1),~v=0 \}.
$$
The maximum point of $S_{0}$ is $(\frac{1+a}{2},0,\frac{k}{4}(1-a)^2)$, which is a non--hyperbolic point.
Moreover, we define the right branch of $S_{0}$ as $S_{0}^{r}$ and the left branch of $S_{0}$ as $S_{0}^{l}$.
Recall that $S_{0}^{r}$ and $S_{0}^{l}$ do not contain the non--hyperbolic point $(\frac{1+a}{2},0,\frac{k}{4}(1-a)^2)$, the local maximum point.
It is clear that all points on the critical set $L_{0}\cup S_{0}$ are equilibria of the layer system (\ref{eqlayer}). When $w_0 \in \left[0,\frac{k}{4}(1-a)^2\right)$, the layer system \eqref{eqlayer} on $w=w_0$ has three equilibria $(0,0,w_0),~(U_{1}(w_0),0,w_0)$ and $(U_{2}(w_0),0,w_0)$,
where
$$U_{1,2}(w_0)=\frac{1+a}{2}\pm \frac{1}{2}\sqrt{(1-a)^2-\frac{4w_0}{k}}.$$

According to \cite{SZ}, in the $w=w_{f}=0$ plane the layer system has a heteroclinic orbit $\phi_{f}=(u_{f},v_{f})^{T}$ {in the first quadrant}, with the wave speed $c=c_{0}(a)=\frac{\sqrt{2k}}{F(0)}(\frac{1}{2}-a)$, connecting $(0,0,0)$ and $(1,0,0)$, where
$$
u_{f}(\xi)=\frac{C_fe^{F(0)\sqrt{\frac{k}{2}}\xi}}{1+C_fe^{F(0)\sqrt{\frac{k}{2}}\xi}}, \quad {~~v_{f}(\xi)=\frac{u_{f}'(\xi)}{F(0)},}
$$
with $C_f$ an integral constant. In the $w=w_{b}$ plane, where $w_{b}$ satisfies
\begin{equation}\label{eq.wb and w0 relation}
\begin{array}{ll}
(1-2a)F(w_{b})=(2U_{1}(w_{b})-U_{2}(w_{b}))F(0)
\end{array}
\end{equation}
and $0<w_{b}<\frac{k(2a^2-5a+2)}{9}<\frac{k(1-a)^2}{4}$,
system \eqref{eqlayer} on $w=w_b$ has a heteroclinic orbit $\phi_{b}=(u_{b},v_{b})^{T}$  {in the fourth quadrant}, with the wave speed $c=c_{0}(a)$, connecting $(U_{2}(w_{b}),0,w_b)$ and $(0,0,w_b)$.
Here
$$u_{b}(\xi)=\frac{U_{2}(w_{b})}{1+C_be^{F(w_{b})\sqrt{\frac{k}{2}}U_{2}(w_{b})\xi}}, \quad { ~~v_{b}(\xi)=\frac{u_{b}'(\xi)}{F(w_b)},}$$
with $C_b$ an integral constant.

From the above discussions, one can construct the singular homoclinic orbit, consisting of the above mentioned two heteroclinic fast orbits and two slow orbit arcs on $L_0$ and $S_0^r$ in between $0\le w\le w_b$, see Fig. 1. Shen and Zhang \cite{SZ} proved the next result.

\begin{lemma}\label{lem.SZ}
For $a\in [0,~1/2)$ and any fixed positive values of $(M,~c_{1},~k)$, if $\gamma>0$ then for $\epsilon>0$ sufficiently small, there is a function $c(a,\epsilon)=c_{0}(a)+O(\epsilon)$ such that system \eqref{eqTraWave} when $c=c(a,\epsilon)$ admits a homoclinic orbit to the origin, 
as that in {red} shown in Fig.~\ref{fig.pulse}.
\end{lemma}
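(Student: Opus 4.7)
The plan is to construct the homoclinic orbit by gluing together perturbations of the four singular pieces (the two fast fronts $\phi_f,\phi_b$ and the two slow arcs on $L_0$ and $S_0^r$) using Fenichel's invariant manifold theorems and a matching argument, with the wave speed $c$ acting as the free parameter needed to close the loop. First I would identify the normally hyperbolic portions of the critical manifolds: both branches $S_0^r$ and the part of $L_0$ with $0<w<w_b$ away from the transcritical/fold points are normally hyperbolic saddle-type manifolds for the layer flow \eqref{eqlayer}, so Fenichel's theorem yields locally invariant slow manifolds $S_\epsilon^r$ and $L_\epsilon$, together with their two-dimensional stable and unstable manifolds $W^{s,u}(S_\epsilon^r)$ and $W^{s,u}(L_\epsilon)$, that are $O(\epsilon)$-close to the singular objects.

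Next I would track the unstable manifold $W^u(L_\epsilon)$ of the origin forward under the flow. Near $w=0$ the heteroclinic $\phi_f$ lies in $W^u(L_0)\cap W^s(S_0^r)$ transversally (within the layer), so by transversality this intersection persists for small $\epsilon$ and the orbit enters a Fenichel neighborhood of $S_\epsilon^r$. The exchange lemma then governs the passage along $S_\epsilon^r$: the orbit follows the slow flow (which pushes $w$ upward along $S_\epsilon^r$) and exits $C^0$-close to $W^u(S_\epsilon^r)$ at some $w=w^*(\epsilon)$. To leave via the back jump $\phi_b$, one needs $w^*(\epsilon)=w_b+O(\epsilon)$, which requires the rule \eqref{eq.wb and w0 relation} and can be enforced by tuning the wave speed to $c=c_0(a)+O(\epsilon)$. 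Finally, a second transverse fast jump via $\phi_b$ drops the orbit into $W^s(L_\epsilon)$, and then the slow flow on $L_\epsilon$ carries it back to the origin.

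To make the matching rigorous I would set up Lin coordinates (or equivalently a Melnikov/pseudo-orbit argument) at two matching sections, one transverse to $\phi_f$ and one transverse to $\phi_b$. The distance between $W^u(0;\epsilon)$ propagated forward through the exchange lemma and $W^s(0;\epsilon)$ propagated backward can be decomposed into components along the strong stable and unstable fibers of $S_\epsilon^r$; the fast components vanish automatically by transversality of the layer heteroclinics, while the slow component is made to vanish by choosing $c=c(a,\epsilon)$ via the implicit function theorem applied to a Melnikov-type integral. This yields the single equation for $c$ claimed in the lemma.

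The main obstacle is the case $a=0$, where the origin coincides with the transcritical intersection of $L_0$ and $S_0^l$, so $L_0$ loses normal hyperbolicity at $w=0$ and standard Fenichel theory breaks down on the return portion of the orbit. I would handle this with a geometric blow-up of the transcritical point (polar-type rescaling in $(u,v,w)$) to desingularize the flow, show that $W^s(L_\epsilon)$ extends as a center-stable object through the blown-up chart, and verify that the back jump $\phi_b$ lands in its interior so that the orbit is funneled back to the origin. All other steps are standard GSP; this desingularization and the transversality verification near the degenerate equilibrium are the delicate part, and they are precisely what \cite{SZ} carried out in detail.
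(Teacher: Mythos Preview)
The paper does not actually prove Lemma~\ref{lem.SZ}; it is stated as a result of Shen and Zhang \cite{SZ} and simply cited. So there is no in-paper proof to compare against beyond the one-line description preceding the lemma and the remarks in Appendix~A.

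That said, your outline matches closely what the paper reports about the argument in \cite{SZ}: Fenichel persistence of $S_0^r$ and $L_0$, exchange-lemma passage near the slow manifolds, transverse layer heteroclinics $\phi_f,\phi_b$, and a one-parameter matching in $c$ to close the loop. The one substantive deviation is your treatment of $a=0$. You propose a geometric blow-up of the transcritical point to desingularize the return to the origin, whereas the paper (and Appendix~A) explicitly say that \cite{SZ} handled the degeneracy by combining Fenichel's theorems with the \emph{center manifold theorem}, not blow-up. Both techniques can resolve a transcritical degeneracy, but they are not the same argument: center-manifold reduction gives a local normal form on which one checks directly that $L_\epsilon$ approaches the origin as $\xi\to+\infty$, while blow-up rescales the singularity into a sphere and analyzes chart dynamics. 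Your sketch is therefore a plausible alternative route at $a=0$, but it is not the route taken in \cite{SZ} as described here.
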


\begin{figure}
  \centering
  \includegraphics[width=6cm]{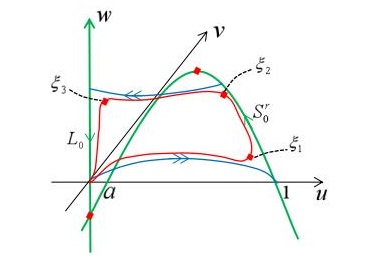}\\
  \caption{For $a\in (0,1/2)$, the green curves are given by $L_{0}$ and $S_{0}$, and the two red points on green curves are  non--hyperbolic point. Moreover, when $a=0$, the red spot on the $w$--axis coincides with the origin. The blue curves with the double arrow represent the front and back solutions of the layer system \eqref{eqlayer}, respectively, which are fast orbits. Here, the red curve is the pulse $\phi_{a,\epsilon}$, and $\xi_{1}=\Xi_{\tau}(\epsilon)$, $\xi_{2}=Z_{a,\epsilon}-\Xi_{\tau}(\epsilon)$ and ${ \xi_3}=Z_{a,\epsilon}+\Xi_{\tau}(\epsilon)$.
  }\label{fig.pulse}
\end{figure}

The homoclinic orbit in Lemma \ref{lem.SZ} of system \eqref{eqlayer} provides a travelling wave solution of system \eqref{eqFHN}, as stated in the following theorem, where there presents an approximation of the wave with the singular homoclinic orbit.

\begin{theorem}\label{the.expression}
Let $\widetilde{\phi}_{a,\epsilon}(\xi)=(u_{a,\epsilon},w_{a,\epsilon})^{T}(\xi)$ be the travelling pulse
solution derived in Lemma \ref{lem.SZ} for $a\in [0, 1/2)$ and $\epsilon>0$ sufficiently small.
For each sufficiently small $\sigma_{0},~\tau>0$, set $\Xi_{\tau}(\epsilon)=-\tau \log\epsilon$, there exist $\epsilon_{0}>0$, $C>1$, and $\xi_{0},~Z_{a,\epsilon}>0$ with $\xi_{0}$ independent of $a$ and $\epsilon$ and $1/C \leq \epsilon Z_{a.\epsilon}\leq C$, such that the next estimation holds.
\begin{itemize}

\item[$(i)$]
On $J_{f}:=(-\infty,\Xi_{\tau}(\epsilon)]$ and $J_{b}:=[Z_{a,\epsilon}-\Xi_{\tau}(\epsilon),Z_{a,\epsilon}+\Xi_{\tau}(\epsilon)]$, the pulse solution $\widetilde{\phi}_{a,\epsilon}(\xi)$ satisfies respectively
\[
\begin{split}
(u_{a,\epsilon}(\xi),w_{a,\epsilon}(\xi))^{T}&=(u_{f}(\xi)+O(\epsilon \log\epsilon),O(\epsilon \log\epsilon))^{T},\\
(u_{a,\epsilon}(\xi),w_{a,\epsilon}(\xi))^{T}&=(u_{b}(\xi)+O(\epsilon \log\epsilon),w_{b}+O(\epsilon \log\epsilon))^{T}.
\end{split}
\]

\item[$(ii)$]
On $J_{r}:=[\xi_{0},Z_{a,\epsilon}-\xi_{0}],$ $\phi_{a,\epsilon}(\xi)=(u_{a,\epsilon},v_{a,\epsilon},w_{a,\epsilon})^{T}(\xi)$ is approximated by the right slow manifold $S_{0}^{r}$
with
$$d(\phi_{a,\epsilon},S_{0}^{r})\leq \sigma_{0}.$$
\item[$(iii)$]
On $J_{l}:=[Z_{a,\epsilon}+\xi_{0},\infty),$ $\phi_{a,\epsilon}(\xi)$ is approximated by the left slow manifold $L_{0}$
with
$$d(\phi_{a,\epsilon},L_{0})\leq \sigma_{0}.$$

\end{itemize}

\end{theorem}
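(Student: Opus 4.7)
The plan is to extract quantitative shadowing estimates from the geometric singular perturbation construction underlying Lemma \ref{lem.SZ}. On the two fast intervals $J_f$ and $J_b$ I would track the pulse against the unperturbed front and back heteroclinics $\phi_f,\phi_b$ via a Gronwall estimate on the variational equation; on the two slow intervals $J_r$ and $J_l$ I would use Fenichel's persistence of the normally hyperbolic portions of $S_0^r$ and $L_0$ together with the exponential attractivity of their stable foliations. The $O(\epsilon\log\epsilon)$ error appears only on the fast intervals because they have length $\Xi_\tau(\epsilon)=-\tau\log\epsilon$, and an $O(\epsilon)$ forcing integrated over such an interval produces exactly this order.

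First, I would invoke Fenichel's theorem on the compact normally hyperbolic pieces of $S_0^r$ and $L_0$ used in \cite{SZ}, obtaining locally invariant slow manifolds $S_\epsilon^r$ and $L_\epsilon$ that are $C^1$-$O(\epsilon)$-close to $S_0^r$ and $L_0$, together with their stable/unstable foliations. For part (ii), the Shen--Zhang construction places the pulse on the stable fiber of $S_\epsilon^r$ whose base point is $O(\epsilon)$-close to $(1,0,0)$, so that for $\xi\geq \xi_0$ the distance from $\phi_{a,\epsilon}(\xi)$ to $S_\epsilon^r$ is bounded by $Ce^{-\kappa(\xi-\Xi_\tau(\epsilon))}$; choosing $\xi_0$ independent of $a,\epsilon$ makes this smaller than $\sigma_0/2$, and the triangle inequality with $d(S_\epsilon^r, S_0^r)=O(\epsilon)$ yields the claim. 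Part (iii) is identical with $L_\epsilon$ replacing $S_\epsilon^r$.

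Next, for part (i) on $J_f$, I would set $\widetilde{\phi}_{a,\epsilon}(\xi)-(u_f(\xi),0)^T=(\tilde u,\tilde w)^T$ and use the fact that the pulse lies on the one-dimensional unstable manifold of the origin, which is $O(\epsilon)$-close to the slice $\{w=0\}$ containing $\phi_f$. Direct integration of the $w$-equation gives $|\tilde w(\xi)|\leq C\epsilon|\xi|$, hence $|\tilde w|=O(\epsilon\log 1/\epsilon)$ on $J_f$. The component $\tilde u$ then satisfies a forced variational equation along $u_f$ with forcing of order $\epsilon+|\tilde w|$; solvability in a bounded class is achieved by tuning $c=c_0(a)+O(\epsilon)$ to remove the secular direction $u_f'$ in the kernel of the variational operator, after which Gronwall on the length-$O(\log 1/\epsilon)$ interval yields $|\tilde u|=O(\epsilon\log 1/\epsilon)$. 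The interval $J_b$ is handled identically with $(u_b,w_b)$ replacing $(u_f,0)$: relation \eqref{eq.wb and w0 relation} guarantees that the back heteroclinic travels at the common wave speed $c_0(a)$, the reference value of $w$ is anchored to the slow drift accumulated on $J_r$, and the residual mismatch is absorbed into $c-c_0(a)=O(\epsilon)$.

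The main obstacle is the matching at the three transition points $\Xi_\tau(\epsilon)$, $Z_{a,\epsilon}-\Xi_\tau(\epsilon)$ and $Z_{a,\epsilon}+\Xi_\tau(\epsilon)$: one must verify that on leaving $J_f$ the pulse actually lies in the stable fiber of $S_\epsilon^r$ above a base point $O(\epsilon)$-close to $(1,0,0)$, that on leaving $S_\epsilon^r$ near $w=w_b$ it lies on the unstable fiber whose forward orbit is exponentially close to $\phi_b$, and that the flight time on $S_\epsilon^r$ satisfies $1/C\leq \epsilon Z_{a,\epsilon}\leq C$. These three facts are exactly what the exchange lemma, as applied in Shen--Zhang \cite{SZ}, supplies; combined by transitivity with the four quantitative estimates above, they yield the theorem.
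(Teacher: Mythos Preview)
Your overall plan---Fenichel persistence for the slow pieces, front/back tracking for the fast pieces, and exchange-lemma matching at the junctions---is the right architecture, and your treatment of parts~(ii) and~(iii) is essentially what the paper does. The gap is in part~(i), in the Gronwall step on the portion of $J_f$ lying inside the Fenichel neighbourhood of $S_0^r$.

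Once $\phi_f$ has entered that neighbourhood it is exponentially close to the layer saddle $(1,0,0)$, and the variational equation along $\phi_f$ inherits that saddle's strictly positive eigenvalue $\lambda_+$. Over an interval of length $\Xi_\tau(\epsilon)=-\tau\log\epsilon$ a naive Gronwall bound therefore amplifies an $O(\epsilon)$ initial error by $e^{\lambda_+\Xi_\tau(\epsilon)}=\epsilon^{-\lambda_+\tau}$, producing $O(\epsilon^{1-\lambda_+\tau})$ rather than $O(\epsilon\log\epsilon)$. Your remark about ``tuning $c=c_0(a)+O(\epsilon)$ to remove the secular direction $u_f'$'' does not address this: $c$ is already fixed by Lemma~\ref{lem.SZ}, and the bounded kernel element $u_f'$ is not the dangerous mode---the genuinely growing direction is the unstable eigenvector of the saddle at $(1,0,0)$, transverse to the front.

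What tames that mode is the global fact that the pulse remains in the Fenichel box for a time of order $1/\epsilon$, which forces its component along the unstable fibre to be $O(e^{-q/\epsilon})$ at entry. The paper packages exactly this via Fenichel normal form plus an Eszter-type tracking lemma (Theorem~\ref{th.Eszter}): after straightening to $(U,V,W)$-coordinates with $S_\epsilon^r=\{U=V=0\}$ and $W^s(S_\epsilon^r)=\{V=0\}$, the front becomes $(U_0(\xi),0,0)$, and the hypothesis that the trajectory enters $N_1$ and only exits at $N_2$ (at slow distance $W_0$, hence after time $O(1/\epsilon)$) is precisely what yields $\|(U,V,W)(\xi)-(U_0(\xi),0,0)\|\le C\epsilon\,\Xi_\tau(\epsilon)$ on $[\xi_f,\Xi_\tau(\epsilon)]$. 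Transforming back and combining with the $O(\epsilon)$ estimate on $(-\infty,\xi_f]$ coming from smooth dependence of $W^u(0)$ gives the first line of~(i); the second line is obtained by the same normal-form argument run near $S_\epsilon^r$ (backward) and near $L_\epsilon$ (forward) around $\xi=Z_{a,\epsilon}$. In short, the exchange-lemma/long-residence information you reserve for ``matching'' must be fed into the part~(i) estimate itself; a variational Gronwall without it cannot close.
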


\begin{proof}
The proof can be found in Appendix A or obtained using the techniques in \cite[Theorem 4.5]{CRS}.
\end{proof}

\section{The main stability results}\label{sx3}

In this section, we state our main results of this paper, which are on stability of the travelling pulse solution $\widetilde{\phi}_{a,\epsilon}(\xi)$
obtained in Lemma \ref{lem.SZ} and Theorem \ref{the.expression}. By linearizing the system along the pulse solutions we study their stability via essential spectrum and point spectrum.

Without abuse using of notation, we write $(u,w)^{T}(\xi)$ as the travelling wave solution of system \eqref{eqTraWave} obtained in the last section.
Let $(U,W)^{T}(x,t)$ be a solution of system \eqref{eqFHN}, which is a perturbation of the travelling wave solution.  Utilizing the moving coordinate $\xi=x+ct$, we write this perturbed solution in
$$(U,W)^{T}(x,t)=(u(\xi)+p(\xi,t),w(\xi)+r(\xi,t))^{T},$$
with $p,~r \in C_{ub}(\mathbb{R},\mathbb{R})$, where
$$C_{ub}(\mathbb{R},\mathbb{R}):=\{u:\mathbb{R} \rightarrow\mathbb{R}|\  u~\mathrm{is~bounded~and~uniformly~continuous}\}$$

Plugging this new expression of $(U,W)^T$ in system \eqref{eqFHN} and replacing $(x,t)$ by $(\xi,t),$ one gets the system of linear partial differential equations that $(p,r)$ satisfy
\begin{equation}\label{equation.of.pr}
\begin{split}
p_{t}&=\frac{1}{F}\frac{\partial}{\partial \xi}\left(\frac{1}{F}p'\right)-cp'-f_{u}(u,w)r-\frac{1}{F}\frac{\partial}{\partial \xi}\left(\frac{F_{w}u'r}{F^2}\right)-\left(\frac{F_{w}}{F^2}\frac{\partial}{\partial \xi}(\frac{u'}{F})\right)r,\\
r_{t}&=-cr'+\epsilon p-\epsilon \gamma r.
\end{split}
\end{equation}
The right--hand side of \eqref{equation.of.pr} defines a linear operator, denoted by $\mathcal{L}_{a,\epsilon}$. That is, $\mathcal{L}_{a,\epsilon}(p,r)^{T}$ is equal to the right--hand side of \eqref{equation.of.pr}.
Note that the linear operator $\mathcal{L}_{a,\epsilon}$ is partially determined by the pulse solution $(u,w)^T$. To study stability of the pulse solution, we will investigate the spectrum of this linear operator.
For this aim we will seek values of $\lambda$ for which the linearized eigenvalue problem $(\mathcal{L}_{a,\epsilon}-\lambda I)\vec{P}=0$ has a nontrivial bounded solution  $\vec{P}=(p,r)^{T}$.

The next is our first main result, which is on the spectrum of the linear operator $\mathcal{L}_{a,\epsilon}$.

\begin{theorem}\label{the.stability}
Let $\widetilde{\phi}_{a,\epsilon}(\xi)$ denote the travelling pulse solution obtained from Theorem \ref{the.expression} with the associated linear operator $\mathcal{L}_{a,\epsilon}$ for $\epsilon>0$ sufficiently small. For $a\in (0,1/2)$, there exists $b_{0}>0$ such that
the spectrum of $\mathcal{L}_{a,\epsilon}$ is contained in
$$
\{0\}\cup \{\lambda \in \mathbb{C}|~\mbox{\rm Re}\lambda\leq -b_{0}\epsilon \}.
$$
\end{theorem}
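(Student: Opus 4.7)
The plan is to split the spectrum $\sigma(\mathcal{L}_{a,\epsilon})$ into essential spectrum and point spectrum and treat them separately. For the essential spectrum, the asymptotic state of the pulse $\widetilde{\phi}_{a,\epsilon}$ at $\xi=\pm\infty$ is the origin, so the far-field operator is constant-coefficient. I would write the eigenvalue problem $(\mathcal{L}_{a,\epsilon}-\lambda I)\vec{P}=0$ as a first-order system and compute the dispersion relation from the asymptotic system, i.e.\ find the set of $\lambda$ for which the spatial eigenvalues cross the imaginary axis. Because the linearization at the origin has the nonzero reaction term $k a u - w$ and the slow term $-\epsilon\gamma$, the Fredholm borders form two parabolic curves both strictly in the left half plane, and by Henry's/Palmer's characterization of the essential spectrum one obtains $\sigma_{\mathrm{ess}}(\mathcal{L}_{a,\epsilon})\subset\{\mathrm{Re}\,\lambda\le -b_0\epsilon\}$ for some $b_0>0$ (here the $\epsilon$-scale is forced by the slow equation for $w$).

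For the point spectrum I would follow the region-decomposition already announced in the introduction: split $\{\mathrm{Re}\,\lambda>-b_0\epsilon\}$ into a large-$|\lambda|$ region $R_3$, an intermediate region $R_2$, and a small neighborhood $R_1$ of the origin. In $R_2\cup R_3$, rescale the eigenvalue problem so that the fast subsystem dominates. Along the slow pieces $J_r$ and $J_l$, Fenichel theory and roughness of exponential dichotomies (Appendix) give an exponential dichotomy for the linearized system with projections of the expected ranks, so no nontrivial bounded solution exists there unless an eigenvalue of the reduced (fast) problem is present; a direct analysis of the asymptotic fast layer eigenvalue problems on $\{w=0\}$ and $\{w=w_b\}$ rules out $\lambda\in R_2\cup R_3$. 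The quantitative size of $R_2,R_3$ is chosen so that the dichotomy constants persist under the $\lambda$-perturbation.

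The delicate case is $R_1$, where I would use Lin's method. Near the front, the back, and the two slow segments I would construct piecewise solutions of $(\mathcal{L}_{a,\epsilon}-\lambda)\vec{P}=0$ that decay (or stay bounded) on the correct half-lines by exploiting the exponential dichotomies on $J_f$, $J_r$, $J_b$, $J_l$. At the three matching points $\xi_1=\Xi_\tau(\epsilon)$, $\xi_2=Z_{a,\epsilon}-\Xi_\tau(\epsilon)$ and $\xi_3=Z_{a,\epsilon}+\Xi_\tau(\epsilon)$ I impose continuity up to jumps living in a complementary direction; the resulting jump conditions constitute a reduced Evans-type determinant $E(\lambda,\epsilon)$. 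Using the known explicit decay rates of $u_f$ and $u_b$ stated in Section \ref{sx2}, one checks that $E(0,\epsilon)=0$ (coming from the translational mode $\xi\mapsto\widetilde{\phi}_{a,\epsilon}'(\xi)$) and that $E$ has at most one other small zero $\lambda_1$. A Melnikov-type calculation, integrating the adjoint of the variational equation against the $\lambda$-derivative of the front/back data, should give a leading-order formula $\lambda_1 = -\epsilon\, A(a) + o(\epsilon)$ with $A(a)>0$ on $(0,1/2)$; this is where the sign, and hence nonlinear stability, is decided.

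The main obstacle I expect is precisely this last step: extracting the sign of the non-trivial small eigenvalue $\lambda_1$ in $R_1$. The translation eigenvalue is free, but $\lambda_1$ comes out as a delicate balance between the front and back contributions (since, unlike in \cite{HDK}, both have genuine exponential decay and both contribute), and its sign must be shown to be strictly negative uniformly on compact subsets of $a\in(0,1/2)$. Controlling the $O(\epsilon\log\epsilon)$ corrections coming from Theorem \ref{the.expression} and showing they do not swamp the leading $O(\epsilon)$ term in $\lambda_1$ will require careful bookkeeping of the Lin's-method remainders and sharp dichotomy estimates on the slow segments.
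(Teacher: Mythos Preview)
Your outline is essentially the paper's proof: essential spectrum via the asymptotic matrix, the three-region split $R_1\cup R_2\cup R_3$, exponential dichotomies and roughness on $R_2\cup R_3$, and Lin's method with a reduced Evans determinant on $R_1$. Two points of clarification are worth making.

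First, the obstacle you anticipate --- that $\lambda_1$ arises as a ``delicate balance between the front and back contributions'' --- does not actually occur. In the paper the matching conditions at the two interfaces (not three: the paper matches at $\xi=0$ and $\xi=Z_{a,\epsilon}$) produce a $2\times 2$ linear system in $(\beta_f,\beta_b)$ whose off-diagonal entries are $O(e^{-q/\epsilon})$, exponentially small. At leading order the system is therefore \emph{diagonal}: the front block gives $\lambda M_f=0$ and the back block gives $\lambda M_{b,1}+M_{b,2}=0$. Thus $\lambda_0=0$ is attached to the front and $\lambda_1\approx -M_{b,2}/M_{b,1}$ is determined by the back alone; there is no competition of signs to resolve.

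Second, the sign of $M_{b,2}$ is not obtained from an abstract Melnikov argument but from an explicit computation: $M_{b,2}$ is the inner product of an adjoint vector with $\phi'_{a,\epsilon}(Z_{a,\epsilon}-L_\epsilon)$, and to leading order in $\epsilon$ it reduces to an integral involving $u_b$, $u_b'$, $F(w_b)$ and $F_w(w_b)$. Since $u_b>0$, $u_b'<0$ and $F_w(w_b)<0$, every term has a definite sign and one gets $M_{b,2}\ge \epsilon/k_2$ directly. The error terms are $O(|\epsilon\log\epsilon|^2)$, which is $o(\epsilon)$, so they do not swamp the leading contribution; your worry about the $O(\epsilon\log\epsilon)$ corrections is unfounded because those errors are squared in the final estimate.
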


Applying Theorem \ref{the.stability} together with the results in \cite{AGJ,Evans 1972 III,Evans 1975}, we obtain easily the next conclusion, which is on nonlinear stability of the travelling pulse $\widetilde{\phi}_{a,\epsilon}(\xi)$.

\begin{theorem}\label{the.nonlinearity stable}
{For $a\in (0, 1/2)$, the} travelling pulse solution from Theorem \ref{the.expression} is nonlinearly
stable for system \eqref{eqFHN}.
\end{theorem}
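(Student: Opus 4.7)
The plan is to deduce Theorem \ref{the.nonlinearity stable} as a corollary of Theorem \ref{the.stability}, invoking the classical spectral-stability-implies-nonlinear-stability machinery of Evans \cite{Evans1972III,Evans1975} and Alexander--Gardner--Jones \cite{AGJ}. The statement of Theorem \ref{the.stability} already gives the sharp spectral picture we need: the only point of the spectrum on the imaginary axis is $\lambda=0$, and the remainder of $\sigma(\mathcal L_{a,\epsilon})$ is contained in $\{\operatorname{Re}\lambda\le -b_0\epsilon\}$, which leaves a spectral gap of size $O(\epsilon)$ to the left of the imaginary axis.

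First I would verify that the hypotheses for the abstract nonlinear stability result are in place. The translational eigenvalue $\lambda=0$ is simple: its eigenfunction is $\widetilde\phi_{a,\epsilon}'(\xi)=(u_{a,\epsilon}',w_{a,\epsilon}')^T$, which is bounded on $\mathbb{R}$ since the pulse decays exponentially at $\pm\infty$, and simplicity of this translational eigenvalue is precisely the content of the analysis of region $R_1$ referenced in the introduction (at most two eigenvalues, one of which is $\lambda=0$ and the other is strictly negative). Next, I would confirm the essential spectrum computed in Section \ref{sx4} is also contained in $\{\operatorname{Re}\lambda\le -b_0\epsilon\}$ (possibly after shrinking $b_0$), so that the whole spectrum apart from the simple zero eigenvalue sits in a sector bounded away from the imaginary axis.

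With these two inputs, the semigroup $e^{\mathcal L_{a,\epsilon}t}$ on $C_{ub}(\mathbb R,\mathbb R^2)$ decomposes as a projection $\mathcal P_0$ onto the one-dimensional kernel spanned by $\widetilde\phi_{a,\epsilon}'$ plus a complementary semigroup satisfying $\|e^{\mathcal L_{a,\epsilon}t}(I-\mathcal P_0)\|\le K e^{-\mu \epsilon t}$ for some $\mu\in(0,b_0)$. I would then set up the standard ansatz
\begin{equation*}
(U,W)^T(x,t)=\widetilde\phi_{a,\epsilon}(\xi+q(t))+(p(\xi+q(t),t),r(\xi+q(t),t))^T,
\end{equation*}
choose the phase shift $q(t)$ so as to kill the $\mathcal P_0$-component of the perturbation, and obtain a variation-of-constants formula for $(p,r)$ coupled with an ODE for $\dot q$. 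A Gronwall/fixed-point argument on this system, using that the nonlinear terms in \eqref{equation.of.pr} are quadratic in $(p,r)$ and that $q$ is driven by $(p,r)$, yields orbital asymptotic stability: for sufficiently small initial perturbation in $C_{ub}$, $\|(p,r)(\cdot,t)\|$ decays like $e^{-\mu\epsilon t/2}$ and $q(t)\to q_\infty$.

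The only genuinely non-routine point is the verification of simplicity of the translational eigenvalue together with the location of the secondary eigenvalue $\lambda_1$ in region $R_1$; both, however, are assumed to be established in Theorem \ref{the.stability}. Hence the bulk of the proof of Theorem \ref{the.nonlinearity stable} is the standard Evans--AGJ argument applied in the function space $C_{ub}(\mathbb R,\mathbb R)\times C_{ub}(\mathbb R,\mathbb R)$, and the main obstacle has already been absorbed into the spectral theorem.
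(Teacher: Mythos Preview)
Your proposal is correct and matches the paper's approach exactly: the paper's entire proof of Theorem~\ref{the.nonlinearity stable} is the sentence ``Applying Theorem~\ref{the.stability} together with the results in \cite{AGJ,Evans1972III,Evans1975}, we obtain easily the next conclusion,'' and you have simply unpacked what that citation entails. The simplicity of $\lambda_0=0$ that you flag is established separately in the paper (Proposition~\ref{pro.lambda0 simple}) as part of the spectral analysis feeding into Theorem~\ref{the.stability}, so your reliance on it is justified.
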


By definition the travelling pulse solution $\widetilde{\phi}_{a,\epsilon}(\xi)$ is \textit{nonlinearly stable} for system \eqref{eqFHN}, if there exists a $d>0$ such that for any solution $\phi(\xi,t)$ of system \eqref{eqFHN} satisfying $\|\phi(\xi,0)-\widetilde{\phi}_{a,\epsilon}(\xi)\|\leq d$, there exists a $\xi_{0}\in \mathbb{R}$ such that
$\|\phi(\xi+\xi_{0},t)-\widetilde{\phi}_{a,\epsilon}(\xi)\|\rightarrow 0$ as $t\rightarrow \infty$, {where the norm is taken to be the $L^\infty$ norm, i.e. the supremum one}.

For the critical value $a=0$, we have the following result.
\begin{theorem}\label{the.stability for a=0}
Let $\widetilde{\phi}_{a,\epsilon}(\xi)$ denote the travelling pulse solution obtained from Theorem \ref{the.expression}. For $a=0$, the pulse solution $\widetilde{\phi}_{a,\epsilon}(\xi)$ is spectrally stable for system \eqref{eqFHN}.
\end{theorem}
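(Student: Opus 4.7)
The proof will parallel that of Theorem \ref{the.stability}, so the strategy is to adapt each ingredient of Sections \ref{sx4}--\ref{sx6} to $a=0$ and keep track of where the arguments lose a uniform bound. Since spectral stability only requires $\sigma(\mathcal{L}_{0,\epsilon})\subset\{\mathrm{Re}\,\lambda\le 0\}$, with $\lambda=0$ allowed on the imaginary axis as a translational eigenvalue, the task is weaker than the $\mathrm{Re}\,\lambda\le -b_{0}\epsilon$ bound proved for $a\in(0,1/2)$: I expect the degeneracy at $a=0$ (the trivial equilibrium coincides with a transcritical point of the critical curves) to make the spectral gap shrink to zero as $a\to 0^{+}$, so the optimal statement is precisely spectral stability rather than nonlinear stability.

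\textbf{Essential spectrum.} First I would redo the asymptotic computation of Section \ref{sx4} at $\xi\to\pm\infty$ with $a=0$. The asymptotic operator is constant coefficient and its dispersion relation involves $f_{u}(0,0)=ka$, which vanishes at $a=0$. I would check directly that the Fourier curves $\lambda(\eta)$ for $\eta\in\mathbb{R}$ are analytic in $a$, that for $a>0$ they lie in $\{\mathrm{Re}\,\lambda\le -\delta(a)\}$ with $\delta(a)\to 0$ as $a\to 0^{+}$, and that at $a=0$ the rightmost point of the essential spectrum is precisely $\lambda=0$, with the curve tangent to the imaginary axis there. Thus $\sigma_{\mathrm{ess}}(\mathcal{L}_{0,\epsilon})\subset\{\mathrm{Re}\,\lambda\le 0\}$ and intersects the imaginary axis only at $\lambda=0$.

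\textbf{Point spectrum.} Next I would redo the $R_{1},R_{2},R_{3}$ decomposition of Section \ref{sx5}. The exclusion of point spectrum in $R_{2}\cup R_{3}$ rests on the Sturm-Liouville/exponential-dichotomy estimates recalled in the appendix; these estimates depend continuously on $a$ and remain valid at $a=0$ provided one uses a weighted space that restores a spectral gap at the asymptotic end where the dichotomy degenerates, so that argument transplants directly. In $R_{1}$ the Lin-method construction of Section \ref{sx5} produces at most two candidate eigenvalues: the translational one $\lambda_{0}=0$ (with eigenfunction $(u_{a,\epsilon}',w_{a,\epsilon}')^{T}$, which persists at $a=0$), and a second eigenvalue $\lambda_{1}(a,\epsilon)$ determined by a Melnikov-type jump condition along the back. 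For $a\in(0,1/2)$ the proof of Theorem \ref{the.stability} gives $\mathrm{Re}\,\lambda_{1}(a,\epsilon)\le -b_{0}\epsilon$; since the Lin coefficients depend analytically on $a$, taking $a\to 0^{+}$ yields $\mathrm{Re}\,\lambda_{1}(0,\epsilon)\le 0$ by continuity.

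\textbf{Main obstacle.} The hardest part will be justifying the Lin-method construction in $R_{1}$ at the front endpoint when $a=0$, because the asymptotic linearization at the origin then has a zero eigenvalue in the $u$-direction, so the exponential dichotomy used in \cite{HDK,SZ} and throughout Section \ref{sx5} breaks down there. I would resolve this either by inserting an exponential weight $e^{-\beta\xi}$ with $\beta>0$ small (shifting the dichotomy so that the full stable/unstable splittings survive at $\xi\to-\infty$, which is admissible since such a weight does not change the point spectrum in the region $\mathrm{Re}\,\lambda>0$ of interest), or by a blow-up of the origin to recover hyperbolicity of the layer problem on the blown-up space, and then checking that the two candidate zeros of the Lin jump function at $a=0$ are limits of the two zeros for $a>0$. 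Once this continuity is established, combining it with the essential-spectrum analysis above yields $\sigma(\mathcal{L}_{0,\epsilon})\subset\{\mathrm{Re}\,\lambda\le 0\}$ with intersection of the imaginary axis only at $\lambda=0$, which is the desired spectral stability.
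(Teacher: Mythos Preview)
Your proposal is correct and matches the paper's approach: the exponential weight you propose in the ``Main obstacle'' paragraph is precisely the paper's shifted eigenvalue problem \eqref{eq.shift} with weight $\eta$, and Lemma~\ref{lem.1} through Proposition~\ref{pro.lambda1 approximate} are already stated and proved uniformly for $a\in[0,\tfrac12-k_{1}]$, so the case $a=0$ falls out directly without your limiting argument $a\to 0^{+}$. The only additional ingredient at $a=0$ is Proposition~\ref{pro.1.a=0}, which restricts the equivalence between the original and shifted point spectra to $\lambda$ lying strictly to the right of the essential spectrum---but since spectral stability only concerns $\mathrm{Re}\,\lambda>0$, this suffices.
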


By definition a travelling wave of a system is \textit{spectrally stable} if the linearization operator $\mathcal{L}$ of the system along this wave has its spectrum $\sigma(\mathcal{L})$ satisfying  $\sigma(\mathcal{L})\cap \{\lambda \in \mathbb C| ~\mbox{\rm Re}\lambda>0\}=\emptyset$, i.e., there is no a spectrum point in the open right--half part of the complex plane. Otherwise, the wave is \textit{spectrally unstable}, see e.g. Kapitula and Promislow \cite[Definition 4.1.7]{2013book}.

On Theorems \ref{the.stability} and \ref{the.stability for a=0}, we have the next remarks.
\begin{itemize}
\item Our next proofs show that the point spectra in both cases $a\in (0,1/2)$ and $a=0$ are the same. That is, each of them contains two elements, one is zero and another is negative.

\item Whereas the essential spectra of the travelling wave in the cases $a\in (0,1/2)$ and  $a=0$ are different. The former has the essential spectrum in the interior of the left half of the complex plane, which causes the nonlinear stability of the travelling pulse by using the results from \cite{AGJ,Evans1972III,Evans1975}. The latter has the essential spectrum in the left half of the complex plane with a unique point on the imaginary axis, which is the origin. So, there does not happen the Andronov-Hopf bifurcation.
    We strongly believe that in this last case the travelling pulse is also nonlinearly stable. But at the moment we cannot prove it.

\item In the proof of our main theorems, we get help of the shifted eigenvalue problem. This proof also shows that in the case $a=0$, the travelling pulse is stable in the exponential weighted space $X_{\eta}:=\{(u,v)\in C_{ub}(\mathbb{R},\mathbb{R})\times C_{ub}(\mathbb{R},\mathbb{R})~|~(e^{-\eta \xi}u,e^{-\eta \xi}v) \in C_{ub}(\mathbb{R},\mathbb{R})\times C_{ub}(\mathbb{R},\mathbb{R})\}$ with $\eta>0$ defined in Lemma \ref{lem.1}.
\end{itemize}

To prove Theorem \ref{the.stability}, we need to investigate the spectra of these waves by the exponential dichotomy of the linearized operator $\mathcal{L}_{a,\epsilon}$ of system \eqref{eqFHN} along the travelling pulse.
We further write the linearized eigenvalue problem $(\mathcal{L}_{a,\epsilon}-\lambda I)\vec{P}=0$ in a first order linear differential system as follows
\begin{eqnarray}\label{full equation}
\begin{split}
p'=&F(w)q,\\
q'=&F(w)(f_{u}(u,w)+\lambda)p+cF^{2}(w)q+F(w)f_{w}(u,w)r
\\
&\quad +\left(\frac{F_{w}}{F}\frac{\partial}{\partial \xi}\left(\frac{u'}{F}\right)\right)r+\frac{\partial}{\partial\xi}\left(\frac{F_{w}}{F^2}u'r\right),\\
r'=&\frac{\epsilon}{c}p-\frac{\epsilon\gamma+\lambda}{c}r,
\end{split}
\end{eqnarray}
where
\[
\begin{split}
 &\left(\frac{F_{w}}{F}\frac{\partial}{\partial \xi}\left(\frac{u'}{F}\right)\right)r+\frac{\partial}{\partial\xi}\left(\frac{F_{w}}{F^2}u'r\right)  \\
 &   = \frac{1}{F^2}\left( 2F_{w}u''-3\frac{F_{w}^2}{F}u'w'+F_{ww}u'w'-\frac{\epsilon \gamma+\lambda}{c}F{w}u'\right)r+\frac{\epsilon F_{w}u'}{cF^{2}}p .
\end{split}
\]
The coefficient matrix of \eqref{full equation} is denoted by $A_{0}(\xi,\lambda)=A_{0}(\xi,\lambda; a,\epsilon)$. Then system \eqref{full equation} can be written in the form
\begin{equation}\label{full matrix}
\varphi'=A_{0}(\xi,\lambda; a,\epsilon) \varphi,
\end{equation}
where $\varphi=(p,q,r)^{T}.$

To study spectral stability of the travelling wave solutions of the partial differential system \eqref{eqFHN}, {one needs to prove existence of non--trivial solutions of the ordinary differential system \eqref{full matrix} satisfying $p,q,r\in C_{ub}(\mathbb{R},\mathbb{R})$.
By \cite{P1984,P1988,Sandstede} the existence of such kind of solutions for the linear nonautonomous system \eqref{full matrix} can be characterized in terms of exponential dichotomies (see Appendix).
So, spectral properties of $\mathcal{L}_{a,\epsilon}$, namely invertibility of $\mathcal{L}_{a,\epsilon}-\lambda I$ in a Banach Space $C_{ub}(\mathbb{R},\mathbb{R})\times C_{ub}(\mathbb{R},\mathbb{R})$,} can be
restated in terms of properties of exponential dichotomies of system (\ref{full matrix}). Recall from \cite{Sandstede} the following properties on spectrum of linear operators.
\begin{itemize}
  \item[$(O_1)$] $\lambda\in\mathbb C$ is in the \textit{resolvent set} of $\mathcal{L}_{a,\epsilon}$ if and only if the asymptotic matrix $A_{\infty}(\lambda):= \lim_{\xi\rightarrow \pm \infty} A(\xi,\lambda)$ is hyperbolic
      and the projections $P_{\pm}(\xi,\lambda)$ of the exponential
dichotomies of (\ref{full matrix}) on $J=\mathbb{R}_{\pm}$ satisfy $\ker(P_{-}(0,\lambda))\oplus R(P_{+}(0,\lambda))=\mathbb{C}^{n}.$ {  Here $\ker$ and $R$ denote respectively the kernel and range of a linear operator.}

  \item[$(O_2)$] $\lambda\in\mathbb C$  is in the \textit{point spectrum} if {and only if}  the projections
$P_{\pm}(\xi,\lambda)$ of the exponential dichotomies of system (\ref{full matrix}) on $J=\mathbb{R}_{\pm}$ satisfy $\ker(P_{-}(0,\lambda))\cap R(P_{+}(0,\lambda))\neq \{0\}.$

  \item[$(O_3)$] $\lambda\in\mathbb C$ is in the \textit{essential spectrum}  if   the asymptotic matrix $ A_{\infty}(\lambda)$ is not hyperbolic.

\end{itemize}
Hereafter $\mathbb R_+=[0,\infty)$, $\mathbb R_-=(-\infty,0]$. Note that $\lambda=0$ is always contained in the spectrum of $\mathcal{L}_{a,\epsilon}$ with the associated eigenfunction $\widetilde{\phi}_{a,\epsilon}'(\xi)=(u_{a,\epsilon}'(\xi),w_{a,\epsilon}'(\xi))^{T}$. { This fact follows from the properties of solutions of the variational equations of a differential system along a given solution.} 

Since our travelling pulse has the slow-fast structure, and the essential spectrum is partly determined by the asymptotic matrices, we make a remark here on the contribution of the slow and fast parts of the pulse to its point spectrum. By our next proofs, one gets that the two slow motions do not contribute eigenvalues to the associated second order linear differential operator $\mathcal{L}_{a,\epsilon}$. The contribution to the eigenvalues of $\mathcal{L}_{a,\epsilon}$ comes from the front and back solutions, which are the fast motions of the pulse near two different layers.

{To prove our main results, we analyze in the next two sections the essential and point spectra of the linear operator $\mathcal{L}_{a,\epsilon}$. We remark that the main techniques of the proof on the analysis of the spectra are from those of \cite{BCD,CRS}.}

\section{Essential Spectrum}\label{sx4}
\hspace{0.1in}

{In this section, we first prove that the essential spectrum of $\mathcal{L}_{a,\epsilon}$ is contained in the left half plane and that it has
a distance from the imaginary axis for $a\in (0,1/2)$.} When $a=0,$  the essential spectrum of $\mathcal{L}_{a,\epsilon}$ {includes the origin}.

\begin{proposition}\label{pro.ess}
The essential spectrum of $\mathcal{L}_{a,\epsilon}$ is contained in the half plane $\left\{\lambda\in\mathbb C|\ \mbox{\rm Re}(\lambda)\leq \max\{-\epsilon \gamma,-ka\}\right\}$ of the complex plane. Moreover, for all $\lambda \in \mathbb{C}$ located { on the right hand side } of the essential spectrum,
 the asymptotic matrix
$A_{\infty}(\lambda)=A_{\infty}(\lambda;a,\epsilon)$ of system \eqref{full matrix} has
precisely one $($spatial$)$ eigenvalue with positive real part.
\end{proposition}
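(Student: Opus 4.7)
The plan is to derive the asymptotic matrix $A_\infty(\lambda)$ explicitly, invoke $(O_3)$ to locate the essential spectrum via the spatial eigenvalues of $A_\infty(\lambda)$, and then count the unstable spatial eigenvalues to the right of the essential spectrum by a continuity/sign argument. Because the travelling pulse $\widetilde{\phi}_{a,\epsilon}$ is homoclinic to the origin by Theorem \ref{the.expression}, the solution and its derivatives $u',u'',w'$ decay to zero as $\xi\to\pm\infty$, so every $u'$- and $w'$-factor that appears in the coefficient matrix of \eqref{full matrix} drops out in the limit.

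First I would compute $F(0)=:F_0$, $f_u(0,0)=ka$ and $f_w(0,0)=0$ and read off
\begin{equation*}
A_\infty(\lambda)=\begin{pmatrix} 0 & F_0 & 0 \\ F_0(ka+\lambda) & cF_0^2 & 0 \\ \epsilon/c & 0 & -(\epsilon\gamma+\lambda)/c \end{pmatrix}.
\end{equation*}
The block-triangular structure immediately factorises the characteristic polynomial as
\begin{equation*}
\det(A_\infty(\lambda)-\nu I)=-\bigl(\nu+(\epsilon\gamma+\lambda)/c\bigr)\bigl[\nu^2-cF_0^2\nu-F_0^2(ka+\lambda)\bigr],
\end{equation*}
so the spatial eigenvalues are $\nu_1=-(\epsilon\gamma+\lambda)/c$ together with the two roots $\nu_{2,3}$ of the quadratic factor.

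Next I would determine the set of $\lambda$ at which $A_\infty(\lambda)$ fails to be hyperbolic. Setting $\nu_1=i\omega$ yields $\mathrm{Re}\,\lambda=-\epsilon\gamma$; substituting $\nu=i\omega$ into the quadratic factor and separating real and imaginary parts yields $\lambda=-ka-\omega^2/F_0^2-ic\omega$, which lies in $\{\mathrm{Re}\,\lambda\le -ka\}$. By $(O_3)$ the essential spectrum coincides with the union of these two dispersion curves and is therefore contained in $\{\mathrm{Re}\,\lambda\le\max\{-\epsilon\gamma,-ka\}\}$. For $\lambda$ strictly to the right of this region I have $\mathrm{Re}\,\nu_1<0$, and the quadratic roots satisfy $\nu_2+\nu_3=cF_0^2>0$ and $\nu_2\nu_3=-F_0^2(ka+\lambda)$. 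For large real $\lambda>-ka$ the product is negative, so the two roots are real of opposite sign and exactly one is positive; as $\lambda$ varies in the connected region to the right of the essential spectrum, no $\nu_j$ can cross the imaginary axis by construction, hence by continuity exactly one of $\nu_2,\nu_3$ retains positive real part throughout. Combined with $\mathrm{Re}\,\nu_1<0$ this delivers the asserted unique unstable spatial eigenvalue.

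The computation is essentially explicit, so the main point that needs care is justifying that the coefficient matrix of \eqref{full matrix} really converges to $A_\infty(\lambda)$ as $\xi\to\pm\infty$: this amounts to the uniform decay of $u_{a,\epsilon},w_{a,\epsilon}$ and their relevant derivatives, which on $J_f$ follows from the exponential decay of the front profile $u_f$ and on $J_\ell$ from the exponential approach of the pulse to the hyperbolic slow manifold $L_0$ in Theorem \ref{the.expression}. The block-triangular splitting that isolates $\nu_1$ from $\nu_{2,3}$ is the structural observation that makes both the dispersion analysis and the eigenvalue count essentially automatic.
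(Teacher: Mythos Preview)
Your proof is correct and follows essentially the same approach as the paper: compute $A_\infty(\lambda)$, read off the dispersion curves by setting the spatial eigenvalue purely imaginary, and count unstable spatial eigenvalues to the right of those curves. The only cosmetic difference is that the paper writes $\mu_{2,3}$ explicitly via the quadratic formula, whereas you use Vieta's relations together with a continuity argument on the connected right-hand component; your version is arguably a bit cleaner, since it sidesteps having to argue directly about the sign of the real part of $\sqrt{c^2F_0^4+4F_0^2(ka+\lambda)}$.
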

\begin{proof}
The asymptotic matrix of $A_{0}(\xi,\lambda)$ is
$$A_{\infty}(\lambda)=\left(
                        \begin{array}{ccc}
                          0 & 1+\dfrac{M}{2c_{1}} & 0 \\
                          \left(1+\dfrac{M}{2c_{1}}\right)(ka+\lambda) & c\left(1+\dfrac{M}{2c_{1}}\right)^2 & 0 \\
                          \dfrac{\epsilon}{c} & 0 & -\dfrac{\epsilon \gamma+\lambda}{c} \\
                        \end{array}
                      \right).
$$
The essential spectrum of $\mathcal{L}_{a,\epsilon}$ is given by the solutions of the algebraic equation
\begin{equation}\label{eq.lambda}
\begin{split}
0&=\det(A_{\infty}(\lambda)-il)\\
&=\left(il+\frac{\epsilon \gamma +\lambda}{c}\right)\left(l^2+c\left(1+\frac{M}{2c_{1}}\right)^2 il+(ka+\lambda)\left(1+\frac{M}{2c_{1}}\right)^2 \right)
\end{split}
\end{equation}
with $l \in \mathbb{R}$, where $i=\sqrt{-1}$. 
Obviously, the solutions of the equation \eqref{eq.lambda} are the straight line
$$\lambda=-icl-\epsilon \gamma, \quad ~l\in \mathbb{R}$$
and the parabola
$$\lambda=-icl-ka- {l^2}{\left(1+\frac{M}{2c_{1}}\right)^{-2}}, \quad ~l\in \mathbb{R}.$$
Thus the essential spectrum is confined to $\mbox{\rm Re}(\lambda)\leq \max\{-\epsilon \gamma,-ka\}$.

A straightforward computation shows that the eigenvalues of $A_{\infty}(\lambda)$ are
\[
\begin{split}
\mu_{1}&= -\frac{\epsilon \gamma+\lambda}{c}, \\
\mu_{2,3}&=\frac{1}{2}\left( c\left(1+\frac{M}{2c_{1}}\right)^2 \pm \sqrt{c^2\left(1+\frac{M}{2c_{1}}\right)^4+4(ka+\lambda)\left(1+\frac{M}{2c_{1}}\right)}\right).
\end{split}
\]
{By the assumption that the $\lambda$ belongs to the right hand side of the essential spectrum, one has $\mbox{\rm Re}(\epsilon \gamma+\lambda)>0$, and $\mbox{\rm Re}(\lambda+ka)>0$. This verifies that $\mbox{\rm Re}\mu_1<0$, and one of $\mu_{2,3}$ has negative real part and another one has positive real part.
This proves that for all $\lambda \in \mathbb{C}$ belonging to the right hand side of the essential spectrum, the asymptotic matrix $A_\infty(\lambda)$ has a unique eigenvalue with positive real part.}
\end{proof}

According to Proposition \ref{pro.ess}, for detecting the spectral stability of the travelling wave solution $\widetilde{\phi}_{a,\epsilon}(\xi)$,
we need to study the point spectrum of the linear operator $\mathcal{L}_{a,\epsilon}$.

\section{Point spectrum}\label{sx5}
\hspace{0.1in}

In this section, we calculate the point spectrum in the right hand side of the essential spectrum for $a \in [0,1/2)$.
Firstly, we will show that the point spectrum of $\mathcal{L}_{a,\epsilon}$ for $a \in (0,1/2)$ to
the right hand side of the essential spectrum consists of at most two eigenvalues. If both exist, one is the simple eigenvalue $\lambda=0$, and the other is strictly negative.
Next, we will show that there is no an element in the point spectrum of $\mathcal{L}_{a,\epsilon}$ for $a=0$ to
the right hand side of the essential spectrum.

In order to determine location of the point spectrum, it is useful to split the complex plane in several regions.
For $\widetilde{M}\gg 1$ and $\delta\ll 1$ fixed and independent of $a\in[0,1/2)$ and $\epsilon$, we define the following three regions (see Fig. \ref{R1R2R3})
\[
\begin{array}{ll}
{R_{1}=R_{1}(\delta)}:=B(0,\delta),\\
{R_{2}=R_{2}(\delta,\widetilde{M})}:=\{\lambda \in \mathbb{C} |\  \mbox{\rm Re}(\lambda)\geq -\delta, ~\delta \leq|\lambda|\leq\widetilde{M}   \},\\
{R_{3}=R_{3}(\widetilde{M})}:=\{\lambda \in \mathbb{C} |\  |\arg(\lambda)|\leq 2\pi/3, ~|\lambda|>\widetilde{M}  \}.
\end{array}
\]
{Recall that the point spectrum of $\mathcal{L}_{a,\epsilon}$  is given by
the {values of $\lambda$ } such that the linear differential system \eqref{full equation} has an exponentially {localized} solution.}
\begin{figure}
  \centering
  \includegraphics[width=8cm]{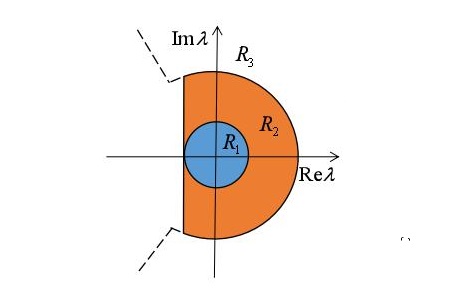}\\
  \caption{The regions $R_{1},~R_{2}$ and $R_{3}$}\label{R1R2R3}
\end{figure}

\subsection{The region $R_{3}(\widetilde{M})$}\label{subsec.R-3}
\hspace{0.1in}

We start by showing that the region $R_{3}(\widetilde{M})$ {does not intersect} the point spectrum by rescaling the eigenvalue problem \eqref{full equation}.

Since $|\lambda|>\widetilde M$ in $R_{3}(\widetilde{M})$, take the rescaling $\check{\xi}=\sqrt{|\lambda|}\xi$, $P=p,~Q=q/\sqrt{|\lambda|},~R=r,$ system \eqref{full equation} is transformed to the next one
\begin{equation}\label{eq.rescale}
\begin{split}
\frac{dP}{d\check{\xi}}&=F(w)Q,\\
\frac{dQ}{d\check{\xi}}&=\frac{\lambda}{|\lambda|}F(w)P+O(\frac{1}{\sqrt{|\lambda|}}),\\
\frac{dR}{d\check{\xi}}&=-\frac{\lambda}{c\sqrt{|\lambda|}}R+O(\frac{\epsilon}{\sqrt{|\lambda|}}).
\end{split}
\end{equation}
Set
$$B(\xi;\lambda):=\left(
                   \begin{array}{ccc}
                     0 & F(w) & 0 \\
                     \frac{\lambda}{|\lambda|}F(w) & 0 & 0 \\
                     0 & 0 & -\frac{\lambda}{c\sqrt{|\lambda|}} \\
                   \end{array}
                 \right).
$$
Obviously, its eigenvalues are
\[
\check{\mu}_{1}=-\frac{\lambda}{c\sqrt{|\lambda|}},\qquad\qquad ~{\check{\mu}_{2,3}=\pm F(w)\sqrt{\frac{\lambda}{|\lambda|}}.   }
\]
According to $|\arg(\lambda)|<\frac{2\pi}{3}$ for all $\lambda \in R_{3}$, we obtain $\mbox{\rm Re}(\sqrt{\frac{\lambda}{|\lambda|}})>\frac{1}{2}.$ Combining with the fact
$F(w)\geq \frac{1}{2}+\frac{M}{4c_{1}}$, it holds
$$
|\check{\mu}_{2,3}|>\frac{1}{4}+\frac{M}{8c_{1}}.
$$
Next,  we distinguish the two cases $8|\mbox{\rm Re}\lambda|>c\sqrt{|\lambda|}$ and $8|\mbox{\rm Re}\lambda|\leq c\sqrt{|\lambda|}$.

\noindent {\it Case }1. $8|\mbox{\rm Re}\lambda|>c\sqrt{|\lambda|}$. Then $B(\xi;\lambda)$ is hyperbolic with their spectral gap larger than $1/8$. Thus,
by Theorems \ref{th.dichotomy} and \ref{th.dichotomy perturbation} in Appendix B, and the roughness (\cite[p. 34]{1978 book}) system \eqref{eq.rescale} has an exponential dichotomy on $\mathbb{R}$ for $\lambda\in R_{3}(\widetilde{M})$. 
Hence, system \eqref{eq.rescale} admits no nontrivial exponentially localized solutions. Consequently, $\lambda$ is not in {the intersection set of $R_{3}(\widetilde{M})$ with} the point spectrum of $\mathcal{L}_{a,\epsilon}$.

\noindent {\it Case }2.  $8|\mbox{\rm Re}\lambda|\leq c\sqrt{|\lambda|}$. By the roughness system \eqref{eq.rescale} has an exponential trichotomy on $\mathbb{R}$ with one--dimensional {center subspace}, and any bounded solution must lie entirely in the center subspace. By {continuity}, the eigenvalues of the asymptotic matrix
$\check{A}_{\infty}(\lambda;a,\epsilon):=\lim_{\xi\rightarrow\pm\infty} \check{A}(\xi,\lambda;a,\epsilon) $ are separated {in the following way: one,  saying $\check{\mu}_{\epsilon}$, has the absolute value of its real part less than $1/8+\kappa$ for some small $\kappa>0$  and the other two have the absolute values of their real parts larger than $\frac{1}{4}+\frac{M}{8c_{1}}-\kappa$. Let $\beta$ be the eigenvector of $\check{A}_{\infty}(\lambda;a,\epsilon)$ associated with $\check{\mu}_{\epsilon}$,} then any solution $\check{\varphi}$ in the center subspace satisfies $\lim_{\xi\rightarrow\pm\infty}\check{\varphi}e^{-\check{\mu}_{\epsilon}\check{\xi}}=b_{\pm}\beta$ for some $b_{\pm}\in\mathbb{C}\setminus \{0\}$ (see \cite{Levinson}, Theorem 1).  Hence, system \eqref{eq.rescale} admits no exponentially localized solutions, and consequently $\lambda \in R_{3}(\widetilde{M})$ is not in the point spectrum of $\mathcal{L}_{a,\epsilon}$.

\subsection{The region $R_{1}(\delta)\cup R_{2}(\delta,\widetilde{M})$}
\hspace{0.1in}

In this subsection, we introduce a weight $\eta>0$ and consider the shifted system
\begin{equation}\label{eq.shift}
\varphi'=A(\xi,\lambda)\varphi,
\end{equation}
where
\begin{eqnarray*}\label{eq.shift matrx}
\aligned
A(\xi,\lambda)&= A(\xi,\lambda;a,\epsilon)=A_{0}(\xi,\lambda;a,\epsilon)-\eta I\\
 &=\left(
                                            \begin{array}{ccc}
                                              -\eta & F(w) & 0 \\
                                              F(w)(f_{u}+\lambda)+\dfrac{\epsilon F_{w}u'}{cF^2} & cF^{2}(w)-\eta & \Delta_{a,\epsilon} \\
                                              \dfrac{\epsilon}{c} & 0 & -\dfrac{\lambda+\epsilon\gamma}{c}-\eta \\
                                            \end{array}
                                          \right),
\endaligned
\end{eqnarray*}
and
$$\Delta_{a,\epsilon} =F(w)f_{w}(u,w)+\frac{1}{F^{2}(w)}\left(2F_{w}u''-\frac{3F_{w}^2u'w'}{F}+F_{ww}u'w'-\frac{\lambda+\epsilon\gamma}{c}F_{w}u'\right),$$
and the functions $u=u_{a,\epsilon},~w=w_{a,\epsilon}$ for simplification to notations.
We remark that the introduction of the weight $\eta$ is for shifting the eigenvalues of the matrix $A_{0}(\xi,\lambda;a,\epsilon)$ to the left. {Recall that $A_0$ is defined in \eqref{full matrix}.}

\subsubsection{ The shifted eigenvalue problem}
\hspace{0.1in}

{To study the eigenvalues of $A(\xi,\lambda)$, we have the next result, which is for a modification of  $A(\xi,\lambda)$ with general $u,\ w$. }

\begin{lemma}\label{lem.1}
Let $k_{1},~\sigma_{0}>0$ be small and define
\[
\begin{split}
{\mathcal{U}(\sigma_{0},k_{1})}:= & \left\{ (a,u,w)\in \mathbb{R}^3|\ ~a\in \left[0,\frac{1}{2}-k_{1}\right],~u\in [0,\sigma_{0}],~w\in [0,w_{b}+\sigma_{0}]\right\}\\
&\bigcup \left\{ (a,u,w)\in \mathbb{R}^3|\ ~a\in \left[0,\frac{1}{2}-k_{1}\right],~u\in [U_{2}(w_{b})-\sigma_{0},1+\sigma_{0}],\right.\\
   & \qquad\qquad \left. w\in [-k(u-a)(u-1)-\sigma_{0},-k(u-a)(u-1)+\sigma_{0}] \right\}.
\end{split}
\]
Set $F_{m}:= \frac{1}{2}+\frac{M}{4c_{1}}$ and  $\eta:=\frac{\sqrt{2k}}{4}F_{m}k_{1}$. For $\sigma_{0},~\delta>0$ sufficiently small, there exists $\epsilon_{0}>0$ and
$\mu\in(0, \eta]$ such that the matrix
$$\hat{A}(u,w,\lambda,a,\epsilon)=\left(
                                            \begin{array}{ccc}
                                              -\eta & F(w) & 0 \\
                                              F(w)(f_{u}(u,w)+\lambda)+\dfrac{\epsilon F_{w}u'}{cF^2} & cF^{2}(w)-\eta & \Delta \\
                                              \dfrac{\epsilon}{c} & 0 & -\dfrac{\lambda+\epsilon\gamma}{c}-\eta \\
                                            \end{array}
                                          \right),
$$
where
$$\Delta=F(w)f_{w}(u,w)+\frac{1}{F^{2}(w)}\left(2F_{w}u''-\frac{3F_{w}^2u'w'}{F}+F_{ww}u'w'-\frac{\lambda+\epsilon\gamma}{c}F_{w}u'\right),$$
admits a uniform spectral gap larger than $\mu>0$  for $(a,u,w)\in \mathcal{U}(\sigma_{0},k_{1}),~\lambda\in R_{1}(\delta)\cup R_{2}(\delta,\widetilde{M})$, and $\epsilon \in [0,\epsilon_{0}]$.
Moreover, the matrix $\hat{A}$ has precisely one eigenvalue with positive real part.
\end{lemma}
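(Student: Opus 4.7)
The plan is to exploit the slow–fast structure of $\hat A$: for $(u,w)\in\mathcal U$ the underlying pulse solution lies on one of the slow segments along $L_0$ or $S_0^r$, so the derivative quantities $u',u'',w'$ appearing inside $\Delta$ are of order $\epsilon$ or smaller. Setting $\epsilon=0$ and dropping these small derivative terms reduces $\hat A$ to the block upper–triangular matrix
\[
\hat A_0 = \begin{pmatrix} A_1(u,w,\lambda,a) & * \\ \mathbf 0 & -\lambda/c-\eta \end{pmatrix}, \qquad A_1 = \begin{pmatrix} -\eta & F(w) \\ F(w)(f_u(u,w)+\lambda) & cF^2(w)-\eta \end{pmatrix},
\]
whose spectrum is the union of that of $A_1$ and the scalar slow eigenvalue $\mu_s=-\lambda/c-\eta$. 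For $\lambda\in R_1(\delta)\cup R_2(\delta,\widetilde M)$ with $\delta$ chosen sufficiently small, $\mbox{\rm Re}\,\mu_s \leq \delta/c - \eta < -\eta/2$, so this block contributes a stable eigenvalue with a uniform gap.

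I will then compute the eigenvalues of the fast block $A_1$ explicitly,
\[
\mu_\pm = \tfrac12\left(cF^2(w)-2\eta \pm F(w)\sqrt{c^2F^2(w)+4(f_u(u,w)+\lambda)}\right),
\]
and evaluate at the base point $(u,w,\lambda)=(0,0,0)$ on $L_0$. Using $c=c_0(a)=\sqrt{2k}(1/2-a)/F(0)$ and $f_u(0,0)=ka$, the discriminant collapses neatly to $2k(1/2+a)^2$, giving $\mu_+ = F(0)\sqrt{2k}/2 - \eta$ and $\mu_- = -aF(0)\sqrt{2k} - \eta$. Because $F(w)\geq F_m$ throughout $\mathcal U$, the specific choice $\eta=\sqrt{2k}F_m k_1/4$ forces $\mu_+>0$ and $\mu_-\leq -\eta$ uniformly, even in the degenerate case $a=0$ where $f_u(0,0)=0$ makes the unweighted system non--hyperbolic. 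On the $S_0^r$ portion of $\mathcal U$ the identity $f_u = ku(2u-a-1)$ gives a uniform positive lower bound on $f_u$, since $u\geq U_2(w_b) > (1+a)/2$ is bounded away from the fold; the same calculation then produces eigenvalues of opposite sign with a uniform gap on that component of $\mathcal U$ as well.

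To propagate the sign pattern to all of $R_1\cup R_2$ and all of $\mathcal U$, I will rule out any eigenvalue of $A_1$ crossing the imaginary axis. Substituting $\mu=i\omega$ in its characteristic polynomial and separating real and imaginary parts yields
\[
\mbox{\rm Re}\,\lambda = -\frac{F^2(\mbox{\rm Im}\,\lambda)^2}{(cF^2-2\eta)^2} + \frac{\eta^2}{F^2} - \eta c - f_u(u,w) \leq \frac{\eta^2}{F^2} - \eta c,
\]
a strictly negative bound for $\eta$ small. Choosing $\delta < \eta c - \eta^2/F^2$ therefore excludes any crossing in $R_1\cup R_2$, so by continuity the sign structure at the base point persists uniformly, with a gap of order $\eta$. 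Finally, the actual matrix $\hat A$ differs from $\hat A_0$ by an additive perturbation of size $O(\epsilon+\sigma_0)$ coming from the entry $\epsilon/c$, the term $\epsilon F_w u'/(cF^2)$, and the derivative--dependent pieces of $\Delta$; finite--dimensional continuity of eigenvalues then delivers the uniform spectral gap $\mu\in(0,\eta]$ for all $\epsilon\in[0,\epsilon_0]$. The main obstacle is uniformity in $a$ down to $a=0$, for which the weight $\eta$ is essential: the explicit value $\eta=\sqrt{2k}F_m k_1/4$ is tailored precisely to keep $\mu_-$ uniformly to the left of the imaginary axis for all $a\in[0,1/2-k_1]$, which would otherwise fail at the critical case $a=0$.
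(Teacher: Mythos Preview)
Your approach is essentially the paper's: both set $\epsilon=0$ to exploit the block upper--triangular structure, both identify the non--hyperbolicity locus of the resulting matrix as the union of the slow line $\lambda=-c_0\eta-ic_0 l$ and the fast parabola
\[
\mbox{Re}\,\lambda=-f_u(u,w)+\frac{\eta^2}{F^2}-\eta c_0-\frac{l^2}{F^2},
\]
show this locus lies strictly to the left of $R_1\cup R_2$, and then invoke continuity in $\epsilon$ (the $\Delta$ entry is irrelevant at $\epsilon=0$ because of the block structure, and for $\epsilon>0$ it only enters the characteristic polynomial multiplied by $\epsilon/c$). The paper's organization is slightly more direct: it bounds $-f_u\leq(3k+1)\sigma_0$ uniformly on $\mathcal U$ and feeds that straight into the parabola estimate, whereas you first compute eigenvalues at specific base points on each component of $\mathcal U$ and then run the crossing argument.

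There is one slip worth fixing. Your displayed inequality $\mbox{Re}\,\lambda\leq \eta^2/F^2-\eta c$ drops the $-f_u(u,w)$ term, which is only legitimate if $f_u\geq 0$. On the $L_0$ piece of $\mathcal U$ with $a=0$ and $u\in(0,\sigma_0]$ one has $f_u=k(3u^2-2u)+w$, which can be negative of order $\sigma_0$; likewise on the $S_0^r$ piece your identity $f_u=ku(2u-a-1)$ holds only exactly on $S_0$, and the $\sigma_0$--tube allows a small negative correction. The correct uniform bound on $\mathcal U$ is $-f_u\leq(3k+1)\sigma_0$, giving
\[
\mbox{Re}\,\lambda\leq \frac{\eta^2}{F^2}-\eta c_0+(3k+1)\sigma_0\leq -\tfrac18 kk_1^2+(3k+1)\sigma_0,
\]
which is still negative once $\sigma_0$ is taken small relative to $k_1^2$. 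With that correction your argument is complete and coincides with the paper's.
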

\begin{proof}
 The matrix $\hat{A}(u,w,\lambda,a,\epsilon)$ is nonhyperbolic if and only if
\begin{eqnarray*}
\aligned
0&= \det(\hat{A}(u,w,\lambda,a,\epsilon)-il{ I})\\
 &=\left(\eta+il+\frac{\lambda+\epsilon\gamma}{c}\right) \left((\eta+il)(cF^2-\eta-il)+F^{2}(f_{u}+\lambda)+\frac{\epsilon F_{w}u'}{cF} \right)+\frac{\epsilon F}{c}\Delta
\endaligned
\end{eqnarray*}
is satisfied for some $l\in\mathbb{R}.$ Here $I$ is the identity operator. In what follows we also use $1$ to represent the identity operator. 
For $\epsilon=0$, $\hat{A}(u,w,\lambda,a,0)$ is nonhyperbolic if and only if $\lambda$ is located on the line $\ell_s$
$$\lambda=-c_{0}\eta-ic_{0}l$$
{ or } on the parabola $\mathcal P_s$
\begin{equation}\label{eq.parabola}
\lambda=-f_{u}+\frac{1}{F^2}(\eta+2\eta il-c_{0}\eta F^2-l^2-c_{0}F^2 il).
\end{equation}
For $u\in [0,\sigma_{0}],~w\in [0,w_{b}+\sigma_{0}]$, it holds
$$
f_{u}(u,w)\geq f_{u}(u,0)\geq f_{u}(\sigma_{0},0)=k(3\sigma_{0}^{2}-2(a+1)\sigma_{0}+a),$$
then
$$-f_{u}(u,w)\leq 3k\sigma_{0}.$$
For $u\in [U_{2}(w_{b})-\sigma_{0},1+\sigma_{0}],~ w\in [-k(u-a)(u-1)-\sigma_{0},-k(u-a)(u-1)+\sigma_{0}]$, it holds
\begin{eqnarray*}
\begin{split}
f_{u}(u,w)&\geq k\left(3u^2-2(a+1)u+a \right)-k(u-a)(u-1)-\sigma_{0}\\
          &=k\left(2u^2-(a+1)u \right)-\sigma_{0}\\
          &\geq k \left(2\left(\frac{a+1}{2}\right)^2-(a+1)\frac{a+1}{2} \right)-\sigma_{0}=-\sigma_{0},
\end{split}
\end{eqnarray*}
then
$$-f_{u}(u,w)\leq \sigma_{0}.$$
Thus, we obtain $-f_{u}(u,w)\leq (3k+1)\sigma_{0}$ for any $(a,u,w)\in {\mathcal{U}(\sigma_{0},k_{1})}$.
Take $\eta=\frac{\sqrt{2k}}{4}F_{m}k_{1}$ and $(3k+1)\sigma_{0}<\frac{1}{8}kk_{1}^2$, where $F_{m}\triangleq \frac{1}{2}+\frac{M}{4c_{1}}=\frac{1}{2}F(0)$, it holds
$$c_{0}\eta=\frac{kk_{1}}{4}(\frac{1}{2}-a)\geq\frac{kk^2_{1}}{4}.$$
By the expression \eqref{eq.parabola} it holds
\begin{eqnarray*}
\aligned
\mbox{\rm Re}(\lambda)&\leq-f_{u}(u,w)+\eta^2\frac{1}{F^2(w)}-c_{0}\eta \leq-f_{u}(u,w)+\eta^2\frac{1}{F^2_{m}}-c_{0}\eta\\
          &\leq-(3k+1)\sigma_{0}- \frac{1}{4}kk_{1}^2\leq - \frac{1}{8}kk_{1}^2,
\endaligned
\end{eqnarray*}
Thus, the union of the line $\ell_s$ and the parabola $\mathcal P_s$ lies in the half plane
$$\mbox{\rm Re}(\lambda)\leq -\frac{1}{8}kk_{1}^2$$
for any $(a,u,w)\in \mathcal{U}(\sigma_{0},k_{1})$.
Hence, provided $\delta>0$ is sufficiently small, the union of $\ell_s$ and $\mathcal P_s$ does not intersect the compact set $R_{1}\cup R_{2}$ for any $(a,u,w)$ {belonging to}
the compact set $\mathcal{U}(\sigma_{0},k_{1})$, namely, when $\lambda \in R_{1}\cup R_{2}$, $\hat{A}(u,w,\lambda,a,0)$ is hyperbolic for any
$(a,u,w)\in \mathcal{U}(\sigma_{0},k_{1})$. By continuity we conclude that there exists $\epsilon_{0}>0$ such
that the matrix $\hat{A}(u,w,\lambda,a,\epsilon)$  has, for $(a,u,w)\in \mathcal{U}(\sigma_{0},k_{1}),~\lambda \in R_{1}\cup R_{2}$ and
$\epsilon \in [0,\epsilon_{0}]$, a uniform spectral gap larger than some $\mu>0$. Since $-\eta$ is in the
spectrum of $\hat{A}(0,0,0,a,0)$, it forces $\mu\leq \eta$.

Moreover, $\hat{A}(u,w,\lambda,a,0)$ has precisely one eigenvalue with positive real part for sufficiently large $\lambda>0.$  Therefore, by continuity, we obtain that
$\hat{A}(u,w,\lambda,a,0)$ has also precisely one eigenvalue with positive real part for
$\lambda \in \mathbb{C}$ lying in the right hand side of $\mbox{\rm Re}(\lambda)\leq -\frac{1}{8}kk_{1}^2$.
So, $\hat{A}(u,w,\lambda,a,\epsilon)$ has precisely one eigenvalue with positive real part for $(a,u,w)\in \mathcal{U}(\sigma_{0},k_{1}),~\lambda\in R_{1}\cup R_{2},~\epsilon \in [0,\epsilon_{0}]$.  This proves the proposition.
\end{proof}

To characterize the point spectrum, we fix
$$\eta=\frac{\sqrt{2k}}{4}F_{m}k_{1},$$
and take
\begin{equation}\label{eq.nu bound}
\nu\geq \max \left\{\sqrt{\frac{2}{k}}\frac{2}{F(0)},~\sqrt{\frac{2}{k}}\frac{2}{F(w_{b})U_{2}(w_{b})},\frac{2}{\mu}   \right\}.
\end{equation}

\begin{proposition}\label{pro.1}
For $a\in (0,1/2)$, $\lambda\in R_{1}\cup R_{2}$ is a point spectrum of $\mathcal{L}_{a,\epsilon}$  if and only if it is an eigenvalue of the shifted eigenvalue problem \eqref{eq.shift}. {Correspondingly, associated to the $\lambda$ the shifted eigenvalue problem \eqref{eq.shift} has a bounded solution.}
\end{proposition}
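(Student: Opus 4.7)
The plan is to exploit the substitution $\psi(\xi)=e^{\eta\xi}\varphi(\xi)$, which is a linear bijection between solutions of the shifted system \eqref{eq.shift} and those of the original eigenvalue problem \eqref{full matrix}. By properties $(O_1)$--$(O_3)$ together with Proposition \ref{pro.ess}, for $\lambda\in R_1\cup R_2$ (which lies to the right of the essential spectrum) the point $\lambda$ belongs to the point spectrum of $\mathcal{L}_{a,\epsilon}$ precisely when \eqref{full matrix} admits a nontrivial bounded solution on $\mathbb{R}$. Similarly, $\lambda$ is an eigenvalue of \eqref{eq.shift} in the sense required by the proposition precisely when \eqref{eq.shift} admits a nontrivial bounded solution. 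Thus the whole claim reduces to checking that the substitution sends bounded solutions to bounded solutions in both directions.

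The key input comes from comparing the spatial spectra of the asymptotic matrices $A_\infty(\lambda)$ and $A_\infty(\lambda)-\eta I$. By Proposition \ref{pro.ess}, $A_\infty(\lambda)$ is hyperbolic with one eigenvalue $\mu_u(\lambda)$ of positive real part and two eigenvalues $\mu_{s_1}(\lambda),\mu_{s_2}(\lambda)$ of negative real part. Specializing Lemma \ref{lem.1} to the admissible value $(u,w)=(0,0)\in \mathcal{U}(\sigma_0,k_1)$ forces $A_\infty(\lambda)-\eta I$ to still possess exactly one eigenvalue with positive real part, so necessarily $\mathrm{Re}\,\mu_u(\lambda)>\eta$, while $\mathrm{Re}\,\mu_{s_j}(\lambda)-\eta<-\eta<0$ for $j=1,2$. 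This asymmetric shift of the spatial spectrum is the mechanism that makes the equivalence work.

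Given a bounded nontrivial solution $\psi$ of \eqref{full matrix}, I would invoke the roughness of exponential dichotomies (Appendix B) together with the exponential convergence $A_0(\xi,\lambda)\to A_\infty(\lambda)$ as $\xi\to\pm\infty$ (which holds because the pulse $\widetilde\phi_{a,\epsilon}$ approaches the origin exponentially) to conclude that on $\mathbb{R}_-$ the solution $\psi$ lies in the one-dimensional unstable fibre and satisfies $|\psi(\xi)|\le C e^{\mathrm{Re}\,\mu_u(\lambda)\,\xi}$ for $\xi\ll 0$. The strict inequality $\mathrm{Re}\,\mu_u(\lambda)>\eta$ then yields $|\varphi(\xi)|=e^{-\eta\xi}|\psi(\xi)|\to 0$ as $\xi\to-\infty$, while the factor $e^{-\eta\xi}$ alone produces boundedness at $+\infty$. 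For the converse, a bounded nontrivial $\varphi$ of \eqref{eq.shift} lies on $\mathbb{R}_+$ in the two-dimensional stable fibre of the shifted system; by Lemma \ref{lem.1} each stable eigenvalue of $A_\infty(\lambda)-\eta I$ has real part bounded above by some $-\alpha<-\eta$, giving $|\varphi(\xi)|\le Ce^{-\alpha\xi}$ for $\xi\gg 0$, whence $|\psi(\xi)|=e^{\eta\xi}|\varphi(\xi)|\to 0$ as $\xi\to+\infty$, while $e^{\eta\xi}\to 0$ at $-\infty$ delivers boundedness there.

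The hard part will be the quantitative justification of these asymptotic rates in the nonautonomous setting. This is to be handled by transferring the autonomous dichotomies of the asymptotic matrices to the nonautonomous systems \eqref{full matrix} and \eqref{eq.shift} on each half line via the roughness theorems recalled in Appendix B, and then appealing to Lemma \ref{lem.1} to secure the quantitative separation $\mathrm{Re}\,\mu_u(\lambda)>\eta$ uniformly for $\lambda\in R_1\cup R_2$ and $\epsilon\in(0,\epsilon_0]$. Once these uniform decay estimates are in place, the boundedness transfer in both directions is automatic and the proposition follows.
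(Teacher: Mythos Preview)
Your proposal is correct and follows essentially the same route as the paper: both arguments hinge on the observation that the asymptotic matrices $A_\infty(\lambda)$ and $A_\infty(\lambda)-\eta I$ each have precisely one eigenvalue of positive real part (via Proposition~\ref{pro.ess} and Lemma~\ref{lem.1}), so that the substitution $\psi=e^{\eta\xi}\varphi$ bijects exponentially localized solutions of \eqref{eq.shift} with those of \eqref{full matrix}. The paper states this conclusion in two lines, whereas you spell out the decay-rate bookkeeping explicitly; one small remark is that your bound $\alpha>\eta$ for the stable eigenvalues of the shifted matrix follows directly from $\mathrm{Re}\,\mu_{s_j}(\lambda)<0$ (Proposition~\ref{pro.ess}) rather than from the spectral gap $\mu\le\eta$ of Lemma~\ref{lem.1}, but you already recorded this correctly in your second paragraph.
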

\begin{proof}
The relation of asymptotic matrices $A_{\infty}(\lambda,a,\epsilon)$ and $\hat{A}(0,0,\lambda,a,\epsilon)$ is
$$\sigma(\hat{A}(0,0,\lambda,a,\epsilon))=\sigma(A_{\infty}(\lambda,a,\epsilon))-\eta.$$
Moreover, when $a\in(0,1/2)$, for $\lambda\in R_{1}\cup R_{2}$, $A_{\infty}(\lambda,a,\epsilon)$ and $\hat{A}(0,0,\lambda,a,\epsilon)$ have precisely one eigenvalue with positive real part by Proposition \ref{pro.ess} and Lemma \ref{lem.1}.
Thus system \eqref{full matrix} admits a nontrivial exponentially localized
solution $\varphi(\xi)$ if and only if system \eqref{eq.shift} admits the one given by $e^{-\eta\xi}\varphi(\xi)$ for $\lambda\in R_{1}\cup R_{2}$ with $a\in(0,1/2)$.
\end{proof}

Let $\Omega$ be the right region of the essential spectrum of $\mathcal{L}_{a,\epsilon}$ for $a=0$. We have the next equivalent result.

\begin{proposition}\label{pro.1.a=0}
For $a=0$, $\lambda\in \left(R_{1}\cup R_{2}\right)\cap { \Omega} $ is a point spectrum of $\mathcal{L}_{a,\epsilon}$  if and only if it is an eigenvalue of the shifted eigenvalue problem \eqref{eq.shift}.
\end{proposition}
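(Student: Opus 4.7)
The plan is to follow the proof strategy of Proposition \ref{pro.1} essentially verbatim, with the sole modification that $\lambda$ is restricted to $\Omega$. The reason for this restriction is the observation, coming from Proposition \ref{pro.ess}, that when $a=0$ the parabolic branch $\lambda=-icl-l^{2}/(1+M/(2c_{1}))^{2}$ of the essential spectrum passes through the origin, so the asymptotic matrix $A_{\infty}(\lambda,0,\epsilon)$ fails to be hyperbolic at $\lambda=0$ (indeed $\mu_{2,3}=0,~c(1+M/(2c_{1}))^{2}$ at $\lambda=a=0$). Intersecting $R_{1}\cup R_{2}$ with $\Omega$ guarantees that $\lambda$ lies strictly to the right of the essential spectrum, so the exponential-dichotomy characterization of the spectrum can be invoked in the same way as for $a\in(0,1/2)$.

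First I would record the spectral identity $\sigma(\hat{A}(0,0,\lambda,0,\epsilon))=\sigma(A_{\infty}(\lambda,0,\epsilon))-\eta$, which is immediate from the definition of the shifted matrix. For $\lambda\in(R_{1}\cup R_{2})\cap\Omega$, Proposition \ref{pro.ess} supplies hyperbolicity of $A_{\infty}(\lambda,0,\epsilon)$ with precisely one spatial eigenvalue of positive real part. Lemma \ref{lem.1}, which was formulated over $a\in[0,1/2-k_{1}]$ and hence already covers $a=0$, supplies hyperbolicity of $\hat{A}(0,0,\lambda,0,\epsilon)$ with likewise a one-dimensional unstable direction and a uniform spectral gap $\mu>0$.

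With both asymptotic matrices hyperbolic and carrying matching one-dimensional unstable subspaces, the linear change of variable $\varphi(\xi)\mapsto e^{-\eta\xi}\varphi(\xi)$ provides a bijection between solutions of \eqref{full matrix} and solutions of \eqref{eq.shift}, under which exponential localization on $\pm\infty$ translates into boundedness, since the stable/unstable splitting is merely translated by $-\eta$ on the spectral side. Combining this with the characterization of the point spectrum via exponentially localized solutions (property $(O_{2})$ from the introduction), the equivalence between $\lambda$ being in the point spectrum of $\mathcal{L}_{0,\epsilon}$ and $\lambda$ being an eigenvalue of \eqref{eq.shift} follows.

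The main obstacle, to the extent that there is one, is purely bookkeeping: one must check that no step of the Proposition \ref{pro.1} argument actually required $a>0$ once $\lambda\in\Omega$ is assumed. Since Lemma \ref{lem.1} was tailored to allow $a=0$ and Proposition \ref{pro.ess} delivers hyperbolicity of the asymptotic matrix throughout $\Omega$ for every $a\in[0,1/2)$, the argument transcribes almost verbatim and no new analytic input is needed; the restriction to $\Omega$ is precisely what was lost in the $a=0$ limit and what is now reinstated by hypothesis.
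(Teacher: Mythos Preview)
Your proposal is correct and follows essentially the same approach as the paper's proof: both argue that for $\lambda\in(R_{1}\cup R_{2})\cap\Omega$ the asymptotic matrices $A_{\infty}(\lambda,0,\epsilon)$ and $\hat{A}(0,0,\lambda,0,\epsilon)$ each have precisely one eigenvalue of positive real part (via Proposition~\ref{pro.ess} and Lemma~\ref{lem.1}), so the map $\varphi\mapsto e^{-\eta\xi}\varphi$ carries exponentially localized solutions of \eqref{full matrix} bijectively to those of \eqref{eq.shift}. Your write-up is in fact more explicit than the paper's about why the restriction to $\Omega$ is needed at $a=0$.
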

\begin{proof}
When $a=0$, for $\lambda\in (R_{1}\cup R_{2})\cap { \Omega}$, the matrices $A_{\infty}(\lambda,0,\epsilon)$ and $\hat{A}(0,0,\lambda,0,\epsilon)$ both have precisely one eigenvalue with positive real part by Proposition \ref{pro.ess} and Lemma \ref{lem.1}.
Thus system \eqref{full matrix} admits a nontrivial exponentially localized
solution $\varphi(\xi)$ if and only if system \eqref{eq.shift} admits the one given by $e^{-\eta\xi}\varphi(\xi)$ for $\lambda\in R_{1}\cup R_{2}\cap { \Omega}$ with $a=0$.
\end{proof}

In order to prove Theorem \ref{the.stability for a=0}, we just need to prove that the point spectrum of the operator $\mathcal{L}_{0,\epsilon}$ does not intersect the region $\Omega_{+}:=\Omega\cap \{\lambda:~\mbox{\rm Re}(\lambda)>0\}$. {According to these last results, it is only necessary to} show that the shifted eigenvalue problem \eqref{eq.shift} does not have intersection of the point spectrum with the region $\Omega_{+}$ with $a=0$.

\subsubsection{Exponential dichotomies along the right and left slow manifolds }\label{subsec.5.2.2}
\hspace{0.1in}

In this subsection, we will prove that system \eqref{eq.shift} has exponential dichotomies on the
intervals $I_{r}=[L_{\epsilon},Z_{a,\epsilon}-L_{\epsilon}]$ and $I_{l}=[Z_{a,\epsilon}+L_{\epsilon},\infty)$, where $L_{\epsilon}=-\nu \log \epsilon$, and
$Z_{a,\epsilon}$ is as in Theorem \ref{the.expression}, for $\lambda \in R_{1}\cup R_{2}$.
By Lemma \ref{lem.1}, the matrix $A(\xi,\lambda)$ is pointwise hyperbolic for $\xi \in I_{r}\cup I_{l}$ and  has slowly varying
coefficients. According to Theorem \ref{th.dichotomy}, the shifted eigenvalue system admits exponential dichotomies.
The main results are the following.

\begin{proposition}\label{pro.2}
For the shifted eigenvalue system \eqref{eq.shift}, the following statements hold.
\begin{itemize}
\item There exists an $\epsilon_{0}>0$ such that for $0<\epsilon<\epsilon_{0}$, system \eqref{eq.shift} admits exponential dichotomies on the intervals $I_{r}=[L_{\epsilon},Z_{a,\epsilon}-L_{\epsilon}]$ and $I_{l}=[Z_{a,\epsilon}+L_{\epsilon},\infty)$ with an exponential decay rate $\mu>0$, as that in Lemma \ref{lem.1}.
\item The projections $\mathcal{Q}_{r,l}^{u,s}(\xi,\lambda)=\mathcal{Q}_{r,l}^{u,s}(\xi,\lambda;a,\epsilon)$  associated to the exponential dichotomies are analytic
in $\lambda \in R_{1}\cup R_{2}$ and are approximated by $\mathcal P$ at the endpoints $L_{\epsilon},~Z_{a,\epsilon}$, in the following way
$$\|(\mathcal{Q}_{r}^{s}-\mathcal{P})(L_{\epsilon},\lambda)\|,
\|(\mathcal{Q}_{r}^{s}-\mathcal{P})(Z_{a,\epsilon}-L_{\epsilon},\lambda)\|,
\|(\mathcal{Q}_{l}^{s}-\mathcal{P})(Z_{a,\epsilon}+L_{\epsilon},\lambda)\|
\leq C\epsilon |\log\epsilon|$$
where $\mathcal{P}(\xi,\lambda)=\mathcal{P}(\xi,\lambda;a,\epsilon)$ is
the spectral projections onto the stable eigenspace of the coefficient matrix $A(\xi,\lambda)$
of system \eqref{eq.shift}, and $C>0$ is a constant independent of $\lambda,a$ and $\epsilon$. 
\end{itemize}

\end{proposition}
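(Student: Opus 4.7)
The plan is to apply the slowly--varying--coefficient criterion for exponential dichotomies (Theorem~\ref{th.dichotomy} of Appendix~B) on each of the intervals $I_r$ and $I_l$ separately. This requires two inputs: pointwise hyperbolicity of $A(\xi,\lambda)$ with a uniform spectral gap, and slow $\xi$--variation of $A$. Pointwise hyperbolicity follows at once from Theorem~\ref{the.expression}(ii)--(iii) and Lemma~\ref{lem.1}: on $I_r\subset J_r$ the pulse is $\sigma_0$-close to $S_0^r$ and on $I_l\subset J_l$ it is $\sigma_0$-close to $L_0$, so $(a,u_{a,\epsilon}(\xi),w_{a,\epsilon}(\xi))\in\mathcal{U}(\sigma_0,k_1)$ and $A(\xi,\lambda)=\hat{A}(u_{a,\epsilon}(\xi),w_{a,\epsilon}(\xi),\lambda,a,\epsilon)$ inherits the spectral gap $\mu>0$ uniformly in $\lambda\in R_1\cup R_2$. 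Slow variation comes from the fact that on the slow regimes $J_r,J_l$ the pulse obeys the slow subsystem \eqref{eqreduce} up to $O(\epsilon)$, so $w'_{a,\epsilon}=O(\epsilon)$ and, because $\phi_{a,\epsilon}$ tracks the attracting Fenichel manifold over $S_0^r$ and $L_0$, the quantities $u'_{a,\epsilon}$ and $u''_{a,\epsilon}$ entering $A$ through $\Delta_{a,\epsilon}$ are likewise $O(\epsilon)$; hence $\|\partial_\xi A(\cdot,\lambda)\|=O(\epsilon)$ uniformly on $I_r\cup I_l$. Theorem~\ref{th.dichotomy} then delivers exponential dichotomies on $I_r$ and $I_l$ with rate $\mu$.

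Analyticity is essentially automatic. Since $A(\xi,\lambda)$ is a polynomial (in fact linear) in $\lambda$, the pointwise spectral projection $\mathcal{P}(\xi,\lambda)$ onto the stable eigenspace is analytic wherever the spectral gap is preserved; by the analytic version of the roughness theorem for exponential dichotomies, the dichotomy projections $\mathcal{Q}_{r,l}^{u,s}(\xi,\lambda)$ inherit this analyticity on $R_1\cup R_2$.

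The delicate part is the endpoint estimate $\|(\mathcal{Q}^s-\mathcal{P})(\cdot,\lambda)\|\leq C\epsilon|\log\epsilon|$. The idea is to combine Theorem~\ref{the.expression}(i) with the explicit form of $u_f,u_b$ and with the lower bound on $\nu$ in \eqref{eq.nu bound}. At $\xi=L_\epsilon=-\nu\log\epsilon$, the formula for $u_f$ gives $1-u_f(L_\epsilon)=O(\epsilon^{F(0)\sqrt{k/2}\,\nu})$, which is at most $O(\epsilon^2)$ by \eqref{eq.nu bound}; together with Theorem~\ref{the.expression}(i) this yields $(u_{a,\epsilon},w_{a,\epsilon})(L_\epsilon)=(1,0)+O(\epsilon|\log\epsilon|)$. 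Parallel computations at $Z_{a,\epsilon}-L_\epsilon$ (using $u_b$ and the $F(w_b)U_2(w_b)$--term in \eqref{eq.nu bound}) and at $Z_{a,\epsilon}+L_\epsilon$ (using $u_b$ together with the slow relaxation along $L_0$) give closeness to $(U_2(w_b),w_b)$ and to $(0,w_b)$, respectively, to the same order. Smoothness of $\hat A$ in $(u,w)$ transfers these $O(\epsilon|\log\epsilon|)$ bounds to the matrix $A(\xi,\lambda)$ at the three endpoints, and analytic perturbation of the isolated stable eigenspace (spectral gap $\mu$) transfers them to the frozen spectral projection $\mathcal{P}$. Finally, since deep inside $I_r$ or $I_l$ the dichotomy projection agrees with $\mathcal{P}$ up to $O(\|\partial_\xi A\|/\mu)=O(\epsilon)$, comparing the two at the endpoints through the exponential dichotomy produces the announced bound $C\epsilon|\log\epsilon|$.

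The main technical hurdle I anticipate is to ensure that only a single factor of $|\log\epsilon|$ appears. The interval $I_r$ has length of order $1/\epsilon$, and naive propagation of small perturbations along it could inflate an $O(\epsilon)$ error into an $O(1)$ one; the cure is to exploit the exponential dichotomy itself, since its stable direction forgets past perturbations at rate $\mu$, so that only a boundary layer of width $O(|\log\epsilon|)$ near each endpoint contributes to the discrepancy between $\mathcal{Q}^s$ and $\mathcal{P}$. Verifying that this bookkeeping yields exactly $|\log\epsilon|$ and not a higher power is precisely where the freedom to take $\nu$ large in \eqref{eq.nu bound} is needed.
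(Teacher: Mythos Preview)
Your approach is correct and is exactly the one the paper defers to (Proposition~6.5 in \cite{CRS}, Proposition~5.2 in \cite{BCD}): pointwise hyperbolicity of $A(\xi,\lambda)$ on $I_r\cup I_l$ from Lemma~\ref{lem.1} together with $\|\partial_\xi A\|=O(\epsilon)$ on the slow regimes feeds directly into the slowly--varying--coefficient criterion (Theorem~\ref{th.dichotomy}), and analyticity in $\lambda$ follows from the analytic roughness theorem.

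One simplification for the endpoint estimate: the bound $\|\mathcal{Q}^{s}-\mathcal{P}\|\le C\sup\|\partial_\xi A\|$ that comes out of the block--diagonalisation/roughness argument holds \emph{uniformly} on all of $I_r$ (and $I_l$), not merely ``deep inside''; this is precisely the content of roughness for exponential dichotomies, whose constants are independent of the interval length.  Hence no boundary--layer bookkeeping is required, your detour through the asymptotic values $(1,0)$, $(U_2(w_b),w_b)$, $(0,w_b)$ is unnecessary for this particular estimate (it is used later, in Proposition~\ref{pro.6}, for a different comparison), and the concern in your final paragraph about controlling the number of $|\log\epsilon|$ factors does not arise---the stated $O(\epsilon|\log\epsilon|)$ is simply a convenient weakening of the $O(\epsilon)$ one actually obtains.
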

\begin{proof}
The proof is similar to Proposition 6.5 in \cite{CRS} or Proposition 5.2 in \cite{BCD}.
\end{proof}

\subsection{The Region $R_{1}(\delta)$}
\hspace{0.13in}

\subsubsection{The reduced eigenvalue problem}\label{section.Reduced Eigenvalue Problem 1}
\hspace{0.13in}

The spectra of the reduced problems along the front and the back will be of critical importance  for the full system.
In this section, we construct a reduced eigenvalue problem by setting $\epsilon$ to $0$ in
system \eqref{eq.shift} for $\xi$ in $I_{f}$ or $I_{b}$. Note that since $\lambda\in R_{1}$ is sufficiently small, the reduced eigenvalue problem, i.e. \eqref{eq.shift}$|_{\lambda=\epsilon=0}$, does not depend on $\lambda$. The reduced eigenvalue problem reads
\begin{equation}\label{eq.refuced eigenvalue}
\varphi'=A_{j}(\xi)\varphi, \qquad ~j=f,~b,
\end{equation}
where
\begin{equation}\label{eq.j=f,b matrx}
\aligned
A_{j}(\xi)= A_{j}(\xi;a)
 =\left(
                                            \begin{array}{ccc}
                                              -\eta & F(w_{j}) & 0 \\
                                              F(w_{j})f_{u}(u_{j}(\xi),w_{j})& c_{0}F^{2}(w_{j})-\eta & \Delta_{1,j} \\
                                              0 & 0 & -\eta \\
                                            \end{array}
                                          \right)
\endaligned
\end{equation}
and
$$\Delta_{1,j}=F(w_{j})f_{w}(u_{j},w_{j})+2\frac{F_{w}(w_{j})}{F^{2}(w_{j})}u_{j}''.$$
Here $u_{j}(\xi)$ denotes the $u$--component of $\phi_{j}$ and $a\in[0,\frac{1}{2}-k_{1}]$, { with $k_1>0$ small given in Lemma \ref{lem.1}}.

For $\xi \in I_{f}=(-\infty,L_{a,\epsilon}]$, the shifted eigenvalue system \eqref{eq.shift} can be written as the perturbed one
\begin{equation*}\label{eq.f's perturbation }
\varphi'=(A_{f}(\xi)+B_{f}(\xi,\lambda))\varphi,
\end{equation*}
where
\begin{eqnarray*}\label{eq.matrix Bf}
\aligned
B_{f}(\xi,\lambda)=B_{f}(\xi,\lambda;a,\epsilon)
 =\left(
                                            \begin{array}{ccc}
                                              0 & F(w_{a,\epsilon})-F(0)  & 0 \\
                                             \widetilde{\Delta}_{f} & c_{0}F^{2}(w_{j})-\eta &  \Delta_{a,\epsilon}-\Delta_{1,f} \\
                                              \dfrac{\epsilon}{c} & 0 & -\dfrac{\lambda+\epsilon\gamma}{c}\\
                                            \end{array}
                                          \right),
\endaligned
\end{eqnarray*}
and
$$\widetilde{\Delta}_{f}=\left.\left(F(w)(f_{u}+\lambda)+\frac{\epsilon F_{w}u'}{cF^2}\right)\right|_{(u,w)=(u_{a,\epsilon},w_{a,\epsilon})}-F(0)f_{u}(u_{f},0).$$

For $\xi \in [-L_{a,\epsilon},L_{a,\epsilon}]$ , system \eqref{eq.shift} can be written as the perturbed one
\begin{equation}\label{eq.b's perturbation }
\varphi'=A(\xi+Z_{a,\epsilon},\lambda)\varphi=(A_{b}(\xi)+B_{b}(\xi,\lambda))\varphi,
\end{equation}
where
$$
B_{b}(\xi,\lambda)=B_{b}(\xi,\lambda;a,\epsilon):=A(\xi+Z_{a,\epsilon},\lambda)-A_{b}(\xi).
$$
{ The upper triangular block structure of the coefficient matrix \eqref{eq.j=f,b matrx} for the reduced eigenvalue problem \eqref{eq.refuced eigenvalue} induces existence of the two--dimensional invariant subspace $\mathbb{C}^2\times \{0\}\subset \mathbb{C}^3$ for system \eqref{eq.refuced eigenvalue}, i.e. the $w=0$ plane}, and the dynamics
of system \eqref{eq.refuced eigenvalue} on this invariant space is given by
\begin{eqnarray}\label{eq.fast subsystem's eigenvalue}
\aligned
\psi'=C_{j}(\xi)\psi,~~j=f,~b,
\endaligned
\end{eqnarray}
with
\begin{eqnarray*}
\aligned
C_{j}(\xi)=\left(
                                              \begin{array}{ccc}
                                                -\eta & F(w_{j}) \\
                                                F(w_{j})f_{u}(u_{j},w_{j}) & c_{0}F^{2}(w_{j})-\eta \\
                                              \end{array}
                                              \right).
\endaligned
\end{eqnarray*}
Obviously, system \eqref{eq.fast subsystem's eigenvalue} admits a one--dimensional {  invariant subspace formed by its bounded solutions, which are} spanned by
\begin{equation*}\label{eq.psi j's solution}
\psi_{j}(\xi)=\psi_{j}(\xi;a):=e^{-\eta\xi}\phi'_{j}(\xi),~j=f,b.
\end{equation*}
Then the adjoint system of system \eqref{eq.fast subsystem's eigenvalue}
\begin{equation*}\label{eq.adjoint system }
\psi'=-C_{j}^{*}(\xi)\psi,~j=f,b,
\end{equation*}
with $C_j^*$ being the transpose of conjugate of $C_j$, also has a one--dimensional {  invariant space formed by its  bounded solutions, which are} spanned by
\begin{equation*}\label{eq.adjoint solution}
\psi_{j,ad}(\xi)=\psi_{j,ad}(\xi;a):=e^{(\eta-c_{0}F^2(w_{j}))\xi}\left(
                                                                   \begin{array}{c}
                                                                    v_{j}'(\xi)\\
                                                                    -u_{j}'(\xi)
                                                                   \end{array}
                                                                  \right)
,~j=f,b.
\end{equation*}
{Recall that $v_j$'s are defined in the formulae on the lines above and under \eqref{eq.wb and w0 relation}.}
Note that the inner product of $\psi_{j}$ and $\psi_{j,ad}$ vanishes, i.e. $\langle \psi_{j},\psi_{j,ad}\rangle=0.$  Next, we  construct exponential dichotomies for subsystem \eqref{eq.fast subsystem's eigenvalue}
on  the both half--lines $\mathbb R_+$ and $\mathbb R_-$.

\begin{proposition}\label{pro.3}
Taking $k_{1}>0$ small. For each $a\in [0,\frac{1}{2}-k_{1}]$,
\begin{itemize}
\item system \eqref{eq.fast subsystem's eigenvalue} admits exponential dichotomies on the both half--lines $\mathbb{R}_{\pm}$, 
     \end{itemize}
with the $a$--independent decay rate $ \mu>0$ and the coefficient $C>0$, 
and the
projections $\Pi_{j,\pm}^{u,s}(\xi)=\Pi_{j,\pm}^{u,s}(\xi;a),~j=f,b,$ which satisfy
\begin{equation}\label{eq.subprojections }
\begin{split}
R(\Pi_{j,+}^{s}(0))&=\mbox{\rm Span}(\psi_{j}(0))=R(\Pi_{j,-}^{u}(0)),\\
R(\Pi_{j,+}^{u}(0))&=\mbox{\rm Span}(\psi_{j,ad}(0))=R(\Pi_{j,-}^{s}(0)),
\end{split}\quad ~j=f,~b.
\end{equation}
Here $\mu,C>0$ are the quantities given in the definition of exponential dichotomy as those in Appendix B.
\end{proposition}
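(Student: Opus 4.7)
The plan is to combine asymptotic-autonomy arguments with Coppel-type exponential dichotomy theory, and then identify the projections at $\xi = 0$ explicitly via the known bounded solutions $\psi_j$ and $\psi_{j,ad}$. Because $\phi_j$ approaches its two endpoint equilibria exponentially fast, $C_j(\xi)$ approaches its asymptotic limits $C_j(\pm\infty)$ exponentially, so the first step is to check that these limiting matrices are uniformly hyperbolic, with a spectral gap at least $\mu > 0$, for all $a \in [0, 1/2 - k_1]$. For $j = f$ at $\xi = -\infty$ the critical case is $a = 0$, where the unshifted layer linearization at $(u,v) = (0,0)$ has eigenvalues $0$ and $c_0 F^2(0)$; the weight $\eta$ of Lemma \ref{lem.1} shifts these to $-\eta$ and $c_0 F^2(0) - \eta$, both separated from the imaginary axis by at least $\mu$, and the remaining three endpoints are easier since their unshifted linearizations are already hyperbolic. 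Together with the roughness result of Theorems \ref{th.dichotomy} and \ref{th.dichotomy perturbation}, this yields the desired exponential dichotomies on $\mathbb{R}_+$ and $\mathbb{R}_-$ with the uniform decay rate $\mu$ and an $a$-independent constant $C$.

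Next, I would identify $R(\Pi_{j,+}^{s}(0))$ and $R(\Pi_{j,-}^{u}(0))$ with $\mbox{\rm Span}(\psi_j(0))$ by verifying decay. A direct substitution shows $\psi_j(\xi) = e^{-\eta\xi}\phi_j'(\xi)$ solves \eqref{eq.fast subsystem's eigenvalue}, and the lower bound on $\nu$ in \eqref{eq.nu bound} is chosen precisely so that $\eta$ is strictly smaller than both the magnitude of the stable eigenvalue of $C_j(+\infty)$ at the right endpoint equilibrium and of the unstable eigenvalue of $C_j(-\infty)$ at the left one; hence $\psi_j(\xi)$ decays exponentially at both $\pm\infty$ at rate at least $\mu$. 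Thus $\psi_j(0)$ lies in the (one-dimensional) stable subspace of the dichotomy on $\mathbb{R}_+$ and in the (one-dimensional) unstable subspace on $\mathbb{R}_-$, forcing the identifications $R(\Pi_{j,+}^{s}(0)) = \mbox{\rm Span}(\psi_j(0)) = R(\Pi_{j,-}^{u}(0))$ by dimensionality.

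For the complementary ranges, I would observe that $\psi_j(0) = (u_j'(0), v_j'(0))^T$ and $\psi_{j,ad}(0) = (v_j'(0), -u_j'(0))^T$ have Wronskian $-(u_j'(0)^2 + v_j'(0)^2)$, which is nonzero because $\phi_j$ is a nonconstant heteroclinic, so $\mbox{\rm Span}(\psi_{j,ad}(0))$ is a legitimate one-dimensional complement to $\mbox{\rm Span}(\psi_j(0))$ in $\mathbb{C}^2$. Defining the complementary projections so that $R(\Pi_{j,+}^{u}(0)) = R(\Pi_{j,-}^{s}(0)) = \mbox{\rm Span}(\psi_{j,ad}(0))$, the dichotomy estimates in these directions then hold automatically: any initial condition transverse to the unique bounded direction is a nontrivial mixture involving an unbounded solution and therefore grows at the asymptotic unstable rate of $C_j(+\infty)$ on $\mathbb{R}_+$ (and symmetrically decays backwards at the asymptotic stable rate of $C_j(-\infty)$ on $\mathbb{R}_-$), which is again at least $\mu$.

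The main obstacle is ensuring the constants $\mu$ and $C$ are uniform in $a$ down to the critical value $a = 0$, where the unshifted linearization at the origin becomes non-hyperbolic because one of its eigenvalues vanishes. The quantitative choice $\eta = \frac{\sqrt{2k}}{4} F_m k_1$ of Lemma \ref{lem.1} is calibrated precisely to restore hyperbolicity for every $a \in [0, 1/2 - k_1]$ while keeping the spectral gap bounded below by the $a$-independent constant $\mu > 0$; once this uniform hyperbolicity is in hand, the remainder of the proof is routine book-keeping of the asymptotic-autonomy and roughness estimates, closely following the model of Proposition 6.5 in \cite{CRS} and Proposition 5.2 in \cite{BCD}.
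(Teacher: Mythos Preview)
Your approach is essentially the same as the paper's: verify that the asymptotic matrices $C_{j,\pm\infty}$ are uniformly hyperbolic with spectral gap at least $\mu$ (the paper does this by noting $\sigma(C_{j,\pm\infty})\subset\sigma(\hat A(\cdot,\cdot,0,a,0))$ and invoking Lemma~\ref{lem.1} directly), conclude exponential dichotomies on $\mathbb R_\pm$ via Coppel-type results, and then fix the projections at $\xi=0$ using the explicit bounded solutions; uniformity in $a$ follows from compactness of $[0,\tfrac12-k_1]$.

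Two small corrections. First, the relevant perturbation theorem is Theorem~\ref{th.dichotomy intergral} (integrable perturbation of the constant-coefficient asymptotic system), not Theorem~\ref{th.dichotomy perturbation}: the difference $C_j(\xi)-C_{j,\pm\infty}$ is exponentially decaying, hence integrable, but is not uniformly small on the whole half-line. Second, your appeal to the lower bound on $\nu$ in \eqref{eq.nu bound} is misplaced; $\nu$ governs the length $L_\epsilon=-\nu\log\epsilon$ and has nothing to do with the decay of $\psi_j$. What you actually need is that $\eta=\tfrac{\sqrt{2k}}{4}F_m k_1$ is strictly smaller than the front/back convergence rates $\sqrt{k/2}\,F(0)$ and $\sqrt{k/2}\,F(w_b)U_2(w_b)$, which is immediate from $k_1<1$ and $F_m=\tfrac12 F(0)$. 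With these fixes your argument goes through unchanged.
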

\begin{proof}
Let $C_{j,\pm\infty}:=\lim_{\xi\rightarrow\pm\infty}C_{j}(\xi)$ be the asymptotic matrices, with $j=f,~b$.
{According to Lemma \ref{lem.1}, the spectra of $C_{f,-\infty}$ and $C_{f,+\infty}$ are contained in the spectra of
$\hat{A}(0,0,0,a,0)$ and $\hat{A}(1,0,0,a,0)$, respectively, namely,

\centerline{$\sigma(C_{f,-\infty})\subset \sigma(\hat{A}(0,0,0,a,0))$ and
$\sigma(C_{f,+\infty})\subset \sigma(\hat{A}(1,0,0,a,0))$.}
\noindent In the same way, it follows

\centerline{$\sigma(C_{b,-\infty})\subset \sigma(\hat{A}(U_{2}(w_{b}),w_{b},0,a,0))$ and
$\sigma(C_{b,+\infty})\subset \sigma(\hat{A}(0,w_{b},0,a,0))$.}
\noindent Moreover, the fact that $\hat{A}(u,w,0,a,0)$ has a uniform spectral gap larger than $\mu>0$ for
$(a,u,w) \in \mathcal{U}(\sigma_{0},k_{1})$ again by Lemma \ref{lem.1} is inherited by the asymptotic matrices $C_{j,\pm\infty},~j=f,~b$, namely the asymptotic matrices $C_{j,\pm\infty},~j=f,~b$  have a uniform spectral gap larger than $\mu>0$.
By Theorems \ref{th.dichotomy} and \ref{th.dichotomy intergral}, it holds that system \eqref{eq.fast subsystem's eigenvalue} admits exponential dichotomies on the both half--lines with the constants $C,~\mu>0$ {(see the definition of exponential dichotomy in Appendix B)} and the projections as in \eqref{eq.subprojections }.}
Since the interval $[0,\frac{1}{2}-k_{1}]$ is compact, we can choose the constant $C>0$ independent of $a$.
\end{proof}

Focus on system \eqref{eq.refuced eigenvalue} again, and observe that
\[ 
\begin{array}{ll}
\omega_{j}(\xi):=\left(\begin{array}{cc}
                        \psi_{j}(\xi) \\
                        0 \\
                      \end{array}
                 \right)=\left(\begin{array}{cc}
                        e^{-\eta\xi}\phi'_{j}(\xi) \\
                        0 \\
                      \end{array}
                 \right),~j=f,~b,
\end{array}
\] 
is a bounded solution to \eqref{eq.refuced eigenvalue}. In addition, using the variation of constants formulas, the
exponential dichotomies of the subsystem \eqref{eq.fast subsystem's eigenvalue} can be extended to the system \eqref{eq.refuced eigenvalue}.

\begin{proposition}\label{pro.4}
Taking $k_{1}>0$ small. For each $a\in [0,\frac{1}{2}-k_{1}]$,
\begin{itemize}
\item system \eqref{eq.refuced eigenvalue} admits exponential dichotomies on the both half--lines $\mathbb{R}_{\pm}$,
 \end{itemize}
with $a$--independent constants $C,~\mu>0$ and the
projections $Q_{j,\pm}^{u,s}(\xi)=Q_{j,\pm}^{u,s}(\xi;a),~j=f,b,$ which satisfy
\begin{equation}\label{eq.Q projections}
\begin{split}
Q_{j,+}^{s}(\xi)&= \left(
                          \begin{array}{cc}
                            \Pi_{j,+}^{s}(\xi) & \int^{\xi}_{\infty}e^{\eta(\xi-\hat{\xi})}\Phi_{j,+}^{u}(\xi,\hat{\xi})F_{j}d\hat{\xi} \\
                            0 & 1 \\
                          \end{array}
                   \right)   =1-Q_{j,+}^{u}(\xi),~\xi\geq 0,\\
Q_{j,-}^{s}(\xi)&= \left(
                          \begin{array}{cc}
                            \Pi_{j,-}^{s}(\xi) & \int^{\xi}_{0}e^{\eta(\xi-\hat{\xi})}\Phi_{j,-}^{u}(\xi,\hat{\xi})F_{j}d\hat{\xi} \\
                            0 & 1 \\
                          \end{array}
                   \right)   =1-Q_{j,-}^{u}(\xi), ~\xi\leq 0.
\end{split}
\end{equation}
Here
$$F_{j}=\left(\begin{array}{c}
          0 \\
          \Delta_{1,j}
        \end{array}
        \right),~j=f,~b,
$$
and $\Phi_{j,+}^{u,s}(\xi,\hat{\xi})=\Phi_{j,+}^{u,s}(\xi,\hat{\xi};a)$  denotes the $($un$)$stable evolution of subsystem
\eqref{eq.fast subsystem's eigenvalue} under the exponential dichotomies established in Proposition \ref{pro.3}.
Moreover,  the projections satisfy
\begin{equation}\label{eq.projections space}
\begin{split}
{\rm R}(Q_{j,+}^{u}(0))&=\mbox{\rm Span}(\varphi_{1,j}),\ \ ~ ~{\rm R}(Q_{j,+}^{s}(0))=\mbox{\rm Span}(\omega_{j}(0),\varphi_{2}),\\
{\rm R}(Q_{j,-}^{u}(0))&=\mbox{\rm Span}(\omega_{j}(0)),~{\rm R}(Q_{j,-}^{s}(0))=\mbox{\rm Span}(\varphi_{1,j},\varphi_{2}),
\end{split} \quad \ \  \ ~j=f,~b.
\end{equation}
Here
$$\varphi_{1,j}=\varphi_{i,j}(a):=\left(\begin{array}{c}
                                    \psi_{j,ad}(0) \\
                                    0
                                  \end{array}
                                  \right),\ \ \ \ ~\varphi_{2}:=\left(\begin{array}{c}
                                    0 \\
                                    0  \\
                                    1
                                  \end{array}
                                  \right).
$$
\end{proposition}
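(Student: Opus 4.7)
My plan is to exploit the block upper--triangular structure of $A_j(\xi)$: the third equation decouples as $r' = -\eta r$ with solution $r(\xi) = r_0 e^{-\eta \xi}$, so the first two coordinates satisfy the forced two--dimensional system $\psi' = C_j(\xi)\psi + F_j r(\xi)$. I would lift the exponential dichotomies of Proposition \ref{pro.3} for the homogeneous subsystem \eqref{eq.fast subsystem's eigenvalue} to the full three--dimensional system \eqref{eq.refuced eigenvalue} via the variation--of--constants formula, producing three--dimensional projections of the block form written in \eqref{eq.Q projections}.

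First, I would write any solution of \eqref{eq.refuced eigenvalue} with initial data $(\psi_0, r_0)^T$ at $\xi=0$ as
\[
\psi(\xi) = \Phi_j(\xi, 0)\psi_0 + \int_0^\xi \Phi_j(\xi, s) F_j r_0 e^{-\eta s}\, ds,\qquad r(\xi) = r_0 e^{-\eta\xi},
\]
where $\Phi_j$ denotes the homogeneous evolution of \eqref{eq.fast subsystem's eigenvalue}. Decomposing $\Phi_j = \Phi_{j,+}^s + \Phi_{j,+}^u$ via Proposition \ref{pro.3} and using the cocycle property, forward boundedness of $\psi$ as $\xi \to +\infty$ forces the unstable part of $\psi_0$ to be balanced against the correction $K_j := -\int_0^\infty e^{-\eta s}\Phi_{j,+}^u(0,s)F_j\,ds$. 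This integral converges because the integrand decays at rate $\min\{\mu,\eta\}$, and it is exactly the top--right block of $Q_{j,+}^s(0)$ appearing in \eqref{eq.Q projections}; the cocycle rule then extends the formula to arbitrary $\xi \geq 0$. Mirroring the argument on $\mathbb{R}_-$ produces $Q_{j,-}^s$ with integration on $[0,\xi]$.

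Next, I would verify that the candidate matrices in \eqref{eq.Q projections} are genuine projections (checking idempotence block--by--block via idempotence of $\Pi_{j,\pm}^s$) and that together with their complements $Q_{j,\pm}^u := I - Q_{j,\pm}^s$ they satisfy the exponential dichotomy estimates at rate $\mu$. The diagonal blocks inherit the bounds directly from Proposition \ref{pro.3}; for the off--diagonal block, combining the dichotomy estimate on $\Phi_{j,\pm}^u$ with the factor $e^{\eta(\xi-\hat\xi)}$ yields decay at the rate $\min\{\mu,\eta\}$, which after possibly shrinking $\mu$ gets absorbed into a common exponent. Uniformity of $C,\mu>0$ in $a \in [0,\tfrac12-k_1]$ follows from the compactness of the parameter interval together with the $a$--independent constants already supplied by Proposition \ref{pro.3}.

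Finally, the range identities \eqref{eq.projections space} are read off from the column spaces of $Q_{j,\pm}^{u,s}(0)$. On $\mathbb{R}_+$ the $r$--direction is stable, so $R(Q_{j,+}^u(0))$ sits in the $r=0$ plane and is spanned by the lifted two--dimensional unstable direction $\varphi_{1,j}$, while $R(Q_{j,+}^s(0))$ contains $\omega_j(0)$ together with the direction carrying the $r$ coordinate, namely $\varphi_2$ up to the stable two--dimensional span. On $\mathbb{R}_-$ the roles of $\psi_j(0)$ and $\psi_{j,ad}(0)$ interchange exactly as in Proposition \ref{pro.3}, yielding the remaining two identities. The principal technical obstacle I anticipate is the uniform control of the off--diagonal integral corrector: since $\mu$ and $\eta$ are of comparable size, some book--keeping is needed to guarantee that the dichotomy rate and constant are genuinely $a$--independent across the compact parameter interval.
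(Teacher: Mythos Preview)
Your approach is essentially the same as the paper's: both exploit the upper--triangular block structure of $A_j(\xi)$ to lift the two--dimensional dichotomies of Proposition~\ref{pro.3} to the three--dimensional system via variation of constants. The paper writes the full $3\times 3$ evolution $T_j(\xi,\hat\xi)$ explicitly and then checks that the block matrices in \eqref{eq.Q projections} commute with it and yield the required decay; you instead argue from forward/backward boundedness to pin down the stable and unstable subspaces. These are two presentations of the same computation, and your version is in fact more detailed than the paper's rather terse ``some calculations yield''.

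One place where you are too quick is the range identity $R(Q_{j,+}^s(0))=\mathrm{Span}(\omega_j(0),\varphi_2)$. From your own formula the third column of $Q_{j,+}^s(0)$ is $(K_j,1)^T$ with $K_j\in R(\Pi_{j,+}^u(0))=\mathrm{Span}(\psi_{j,ad}(0))$, so a priori the range is $\mathrm{Span}\bigl(\omega_j(0),(K_j,1)^T\bigr)$; this coincides with $\mathrm{Span}(\omega_j(0),\varphi_2)$ only if $K_j$ lies in $\mathrm{Span}(\psi_j(0))$, which your ``up to the stable two--dimensional span'' remark does not establish (that span is $\mathrm{Span}(\psi_j(0))$, orthogonal to $K_j$). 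The paper addresses precisely this point via the structural identity $\Pi_{j,+}^s(\xi)\Phi_j(\xi,z)F_j=\Phi_j(\xi,z)F_j$, tied to a fundamental matrix whose second column lies in the stable subspace, which forces the corrector to sit where it should; you need either to verify this identity or to state the range in the form that actually drops out of your construction. Finally, since Lemma~\ref{lem.1} already gives $\mu\le\eta$, the off--diagonal block decays at rate $\min\{\mu,\eta\}=\mu$ with no shrinking required.
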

\begin{proof}
Denote by $\Phi_{j}(\xi,\hat{\xi})$ the evolution of subsystem \eqref{eq.fast subsystem's eigenvalue} with its second column belonging to $R(\Pi_{j,+}^{s}(\xi))$.
By the variation of constants formula, the evolution $T_{j}(\xi,\hat{\xi})=T_{j}(\xi,\hat{\xi};a)$ of system \eqref{eq.refuced eigenvalue} is given by
\begin{equation*}\label{eq.T evolution}
T_{j}(\xi,\hat{\xi})= \left(
                          \begin{array}{cc}
                            \Phi_{j}(\xi,\hat{\xi}) & \int^{\xi}_{\hat{\xi}}\Phi_{j}(\xi,z) F_{j}e^{-\eta(z-\hat{\xi})}dz \\
                            0 & e^{-\eta(\xi-\hat{\xi})} \\
                          \end{array}
                   \right), ~j=f,~b.\\
\end{equation*}
{Some calculations yield
$$\Pi_{j,+}^{s}(\xi)\Phi_{j}(\xi,z) F_{j}=\Phi_{j}(\xi,z) F_{j},~\xi\geq z,~j=f,b$$
and
$$\Pi_{j,-}^{s}(\xi)\Phi_{j}(\xi,z) F_{j}=0,~\xi\leq z,~ j=f,b.$$
And then the projections defined in \eqref{eq.Q projections} yield exponential dichotomies on the both half--lines for \eqref{eq.refuced eigenvalue} with the constants $C,~\mu>0$, where $C$ is independent of $a$.}
\end{proof}

\subsubsection{Along the front}
\hspace{0.13in}

{From Proposition \ref{pro.4},  system \eqref{eq.refuced eigenvalue} admits an exponential dichotomy on  $(-\infty,0]$, then, by  the variation of constants formula, the solutions of system \eqref{eq.shift} can be expressed on interval $(-\infty,0]$. It holds that the exponentially decaying solution to \eqref{eq.shift} in the backward time admits an exit condition at $\xi=0$.}
{Here {\it exit condition} is the one under which the next constructed exponentially decaying solution \eqref{eq.varphi f-} to (17) leaves the neighborhood of the critical curve in the negative time. The time when the orbit negatively leaves the critical curve  is called {\it exit time}.}

Next, we will establish the entry and exit conditions for existence of  the solutions to system \eqref{eq.shift}
on $[0,Z_{a,\epsilon}]$ and for existence of  exponentially decaying solutions to \eqref{eq.shift} in the forward time on
$[Z_{a,\epsilon},\infty)$.
{Then, equating these exit and entry conditions at
$\xi=0$ and $\xi=Z_{a,\epsilon}$, we obtain a matching equations whose solutions indicate that system \eqref{eq.shift} admits an exponentially localized solution.}

\begin{proposition}\label{pro.5}
Let $T_{f,-}^{u,s}(\xi,\hat{\xi})=T_{f,-}^{u,s}(\xi,\hat{\xi};a)$  be the $($un$)$stable evolution of system \eqref{eq.refuced eigenvalue} under the exponential dichotomy on $I_{f}=(-\infty,0]$ established in Proposition \ref{pro.4} and the
associated projections are given by  $Q_{f,-}^{u,s}(\xi)=Q_{f,-}^{u,s}(\xi;a)$. The following statements hold.
\begin{itemize}
\item[$(i)$]
 There exist $\delta,~\epsilon_{0}>0$,  such that any solution $\varphi_{f,-}(\xi,\lambda)$ to \eqref{eq.shift} decaying exponentially in the backward time  for $\lambda \in R_{1}(\delta)$ and $\epsilon\in(0,\epsilon_{0})$ satisfies
\begin{equation}\label{eq.varphi f-}
\begin{split}
&\varphi_{f,-}(0,\lambda)=\beta_{f,-}\omega_{f}(0)+\beta_{f,-}\int_{-\infty}^{0}T_{f,-}^{s}(0,\hat{\xi})B_{f}(\hat{\xi},\lambda)\omega_{f}(\hat{\xi})d\hat{\xi}
                          +\mathcal{H}_{f,-}(\beta_{f,-}),\\
&Q_{f,-}^{u}(0)\varphi_{f,-}(0,\lambda)=\beta_{f,-}\omega_{f}(0),
\end{split}
\end{equation}
for some $\beta_{f,-}\in\mathbb{C}$, where $\mathcal{H}_{f,-}$ is a linear map satisfying the bound condition
$$\|\mathcal{H}_{f,-}(\beta_{f,-})\|\leq C(\epsilon |\log\epsilon|+|\lambda|)^{2}|\beta_{f,-}|,$$
with $C>0$ independent of $\lambda,~a$ and $\epsilon$. Moreover, $\varphi_{f,-}(\xi,\lambda)$ is analytic in $\lambda$.

\item[$(ii)$]
The derivative $\phi'_{a,\epsilon}:=  (u'_{a,\epsilon},v'_{a,\epsilon},w'_{a,\epsilon})^{T}$ of the pulse solution satisfies
\begin{equation}\label{eq.pulse solution}
\begin{array}{ll}
Q_{f,-}^{s}(0)\phi'_{a,\epsilon}(0)=\int_{-\infty}^{0}T_{f,-}^{s}(0,\hat{\xi})B_{f}(\hat{\xi},0)e^{-\eta\hat{\xi}}\phi'_{a,\epsilon}(\hat{\xi})d\hat{\xi}.
\end{array}
\end{equation}
\end{itemize}
\end{proposition}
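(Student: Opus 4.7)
The plan is to solve the shifted eigenvalue system \eqref{eq.shift} on $(-\infty,0]$ by applying the variation of constants formula relative to the reduced equation \eqref{eq.refuced eigenvalue}, treating $B_f(\xi,\lambda)$ as a small perturbation. By Proposition \ref{pro.4}, the reduced system admits an exponential dichotomy on $\mathbb R_-$ whose only backward--decaying direction at $\xi=0$ is spanned by $\omega_f(0)\in R(Q_{f,-}^u(0))$; therefore any solution of the full shifted system which decays as $\xi\to-\infty$ is, to leading order, a small deformation of $\omega_f$. Two sources of smallness are available in $B_f$: its $\lambda$--dependent entries are $O(|\lambda|)$, while the remaining entries involve $u_{a,\epsilon}-u_f$, $w_{a,\epsilon}$ and explicit $\epsilon$--factors, each of size $O(\epsilon|\log\epsilon|)$ by Theorem \ref{the.expression}. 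Thus $\|B_f(\xi,\lambda)\|\le C(\epsilon|\log\epsilon|+|\lambda|)$ uniformly on $I_f$.

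For part (i), I would set up the fixed--point equation
\begin{equation*}
\varphi_{f,-}(\xi)=\beta_{f,-}\,\omega_f(\xi)+\int_0^\xi T_{f,-}^u(\xi,\hat\xi)\,B_f(\hat\xi,\lambda)\,\varphi_{f,-}(\hat\xi)\,d\hat\xi+\int_{-\infty}^\xi T_{f,-}^s(\xi,\hat\xi)\,B_f(\hat\xi,\lambda)\,\varphi_{f,-}(\hat\xi)\,d\hat\xi,
\end{equation*}
whose solutions are precisely those solutions of \eqref{eq.shift} on $(-\infty,0]$ that decay in backward time, parameterized linearly by $\beta_{f,-}\in\mathbb C$. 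Using the exponential decay estimates from Proposition \ref{pro.4} together with the bound on $\|B_f\|$, a contraction--mapping argument in a weighted supremum space on $\mathbb R_-$ yields a unique such $\varphi_{f,-}$, depending analytically on $\lambda\in R_1(\delta)$ for $\epsilon$ small. Setting $\xi=0$ (the first integral then vanishes) gives
\begin{equation*}
\varphi_{f,-}(0,\lambda)=\beta_{f,-}\,\omega_f(0)+\int_{-\infty}^0 T_{f,-}^s(0,\hat\xi)\,B_f(\hat\xi,\lambda)\,\varphi_{f,-}(\hat\xi,\lambda)\,d\hat\xi.
\end{equation*}
Substituting $\varphi_{f,-}=\beta_{f,-}\omega_f+O(\|B_f\|\,|\beta_{f,-}|)$ under the integral and iterating once produces the stated representation, with $\mathcal H_{f,-}(\beta_{f,-})$ collecting every term involving $B_f$ at least twice; the bound $\|\mathcal H_{f,-}(\beta_{f,-})\|\le C(\epsilon|\log\epsilon|+|\lambda|)^2|\beta_{f,-}|$ then follows from $\|B_f\|\le C(\epsilon|\log\epsilon|+|\lambda|)$. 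The identity $Q_{f,-}^u(0)\varphi_{f,-}(0,\lambda)=\beta_{f,-}\omega_f(0)$ is immediate, since the integral term lies in $R(Q_{f,-}^s(0))$ while $\omega_f(0)\in R(Q_{f,-}^u(0))$.

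For part (ii), I would observe that $\tilde\varphi(\xi):=e^{-\eta\xi}\phi'_{a,\epsilon}(\xi)$ is an exact solution of \eqref{eq.shift} at $\lambda=0$, because $\phi'_{a,\epsilon}$ satisfies the variational equation of the travelling wave ODE and the shift $\varphi\mapsto e^{-\eta\xi}\varphi$ converts the original system into \eqref{eq.shift}. Moreover $\tilde\varphi(\xi)\to 0$ as $\xi\to-\infty$, since the linearisation at the origin gives $\phi'_{a,\epsilon}$ an exponential decay rate strictly larger than $\eta=\tfrac{\sqrt{2k}}{4}F_m k_1$ for $k_1$ small. Thus the exact variation of constants formula (without any Neumann expansion) applied to $\tilde\varphi$ yields
\begin{equation*}
\tilde\varphi(0)=\beta\,\omega_f(0)+\int_{-\infty}^0 T_{f,-}^s(0,\hat\xi)\,B_f(\hat\xi,0)\,\tilde\varphi(\hat\xi)\,d\hat\xi
\end{equation*}
for some $\beta\in\mathbb C$. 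Applying $Q_{f,-}^s(0)$ annihilates the $\omega_f(0)$ piece, and substituting $\tilde\varphi(\hat\xi)=e^{-\eta\hat\xi}\phi'_{a,\epsilon}(\hat\xi)$ under the integral produces exactly the claimed identity, since $\tilde\varphi(0)=\phi'_{a,\epsilon}(0)$.

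The main obstacle is setting up the contraction in a Banach space whose weighted sup--norm simultaneously treats $\omega_f$ as a ``unit'' element and forces every perturbation entry of $B_f$ to act as a small operator uniformly in $\lambda\in R_1(\delta)$ and $\epsilon\in(0,\epsilon_0)$. The weight on $\mathbb R_-$ must match the slowest decay of $\omega_f(\xi)$ at $-\infty$, while the dichotomy evolutions $T_{f,-}^{u,s}$ still contract on the complementary directions, and all integrals against $B_f$ must converge absolutely. The closed--form expressions for $u_f,u_f',u_f''$ given in Section \ref{sx2} and the pulse estimates of Theorem \ref{the.expression} supply the quantitative decay bounds that make these integrals tractable.
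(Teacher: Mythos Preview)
Your proposal is correct and follows essentially the same approach as the paper's own proof: variation of constants against the reduced dichotomy of Proposition~\ref{pro.4}, the bound $\|B_f\|\le C(\epsilon|\log\epsilon|+|\lambda|)$ from Theorem~\ref{the.expression}, a contraction in a weighted sup space on $(-\infty,0]$ (the paper fixes the weight as any $\hat\mu\in(0,\mu)$), one Neumann iteration at $\xi=0$ to produce the $\mathcal H_{f,-}$ remainder, and for part~(ii) the observation that $e^{-\eta\xi}\phi'_{a,\epsilon}$ is a backward--decaying solution of \eqref{eq.shift} at $\lambda=0$. The only step the paper makes slightly more explicit than you do is the dimension count: Lemma~\ref{lem.1} guarantees the asymptotic matrix of \eqref{eq.shift} has exactly one eigenvalue of positive real part, so the space of backward--decaying solutions is one--dimensional and therefore coincides with the fixed--point family parameterized by $\beta_{f,-}$.
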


\begin{proof}
For $(i)$, take $0<\hat{\mu}<\mu$. {Denote by $C_{\hat{\mu}}(I_{f,-},\mathbb{C}^3)$ the space of $\hat{\mu}$--exponentially decaying, and continuous functions defined on $I_{f,-}$ with range in $\mathbb{C}^3$, and its associated norm is}
$$\|\varphi\|_{\hat{\mu}}=\sup_{\xi\leq 0}\|\varphi(\xi)\|e^{\hat{\mu}|\xi|}.$$
By Theorem \ref{the.expression} $(i)$, the perturbed matrix $B_{f}(\xi,\lambda;a,\epsilon)$ has the bound estimation
\begin{equation}\label{eq.29}
\begin{array}{ll}
\|B_{f}(\xi,\lambda;a,\epsilon)\|\leq C(\epsilon|\log\epsilon|+|\lambda|),\ \ \ ~\xi\in I_{f,-}.
\end{array}
\end{equation}
Taking $\beta\in \mathbb{C},~\lambda\in R_{1}(\delta)$, one can check that the function
$$\mathcal{G}_{\beta,\lambda}:~C_{\hat{\mu}}(I_{f,-},\mathbb{C}^3) \rightarrow C_{\hat{\mu}}(I_{f,-},\mathbb{C}^3)$$ with
\begin{equation*}\label{eq.30}
\begin{split}
\mathcal{G}_{\beta,\lambda}(\varphi)(\xi)=\beta \omega_{f}(\xi) &+\int_{0}^{\xi}T_{f,-}^{u}(\xi,\hat{\xi})B_{f}(\hat{\xi},\lambda)\varphi(\hat{\xi})d\hat{\xi}\\
&\quad +\int_{-\infty}^{\xi}T_{f,-}^{s}(\xi,\hat{\xi})B_{f}(\hat{\xi},\lambda)\varphi(\hat{\xi})d\hat{\xi},
\end{split}
\end{equation*}
is well--defined, and is a contraction mapping for each $\delta,~\epsilon>0$ sufficiently small (with the contraction constant independent of $\beta$ and $a$). By the Banach Contraction Theorem, the mapping $\mathcal{G}_{\beta,\lambda}$ has a unique fixed point $\varphi_{f,-}$ in $ C_{\hat{\mu}}(I_{f,-},\mathbb{C}^3)$, i.e.
\begin{equation}\label{eq.31}
\begin{array}{ll}
\varphi_{f,-}=\mathcal{G}_{\beta,\lambda}(\varphi_{f,-}),\ \ \ {~\xi\in I_{f,-}} .
\end{array}
\end{equation}
Since the perturbed matrix $B_{f}(\xi,\lambda;a,\epsilon)$ is analytic in $\lambda$, then $\varphi_{f,-}(\xi,\lambda)$ is analytic in $\lambda$. Moreover, $\varphi_{f,-}$ is linear in $\beta$ by construction, and we derive
$$\|\varphi_{f,-}\|\leq \|\beta\omega_{f}\|+\frac{2}{\mu}\|B_{f}\|\|\varphi_{f,-}\|.$$
Combining this with \eqref{eq.29} yields
\begin{equation}\label{eq.32}
\begin{array}{ll}
\|\varphi_{f,-}(\xi,\lambda)-\beta\omega_{f}(\xi)\|\leq C|\beta|(\epsilon|\log\epsilon|+|\lambda|),\ \ \ ~\xi\in I_{f,-}.
\end{array}
\end{equation}

The family of fixed points to Eqs.\eqref{eq.31} parameterized by $\beta\in \mathbb{C}$ form
a one--dimensional space, which consists of exponentially decaying solutions as $\xi\rightarrow -\infty$ to \eqref{eq.shift}.
From Lemma \ref{lem.1}, the asymptotic matrix $\hat{A}(0,0,\lambda,a,\epsilon)$ of system \eqref{eq.shift} has exactly one eigenvalue with positive real part. {Thus, the dimension of the space in which the solutions to \eqref{eq.shift} decay exponentially in the backward time is one.
This proves that there exists some $\beta\in \mathbb{C}$ such that any solution $\varphi_{f,-}(\xi,\lambda)$ to \eqref{eq.shift} that converges to $0$ as $\xi\rightarrow -\infty$ satisfies \eqref{eq.31}.}
Using \eqref{eq.29} and \eqref{eq.32} arrives
\begin{equation*}
\begin{split}
\varphi_{f,-}(0)&=\beta_{f,-}\omega_{f}(0)+\int_{-\infty}^{0}T_{f,-}^{s}(0,\hat{\xi})B_{f}(\hat{\xi},\lambda)\varphi_{f,-}(\hat{\xi})d\hat{\xi}\\
 &=\beta_{f,-}\omega_{f}(0)+\beta_{f,-}\int_{-\infty}^{0}T_{f,-}^{s}(0,\hat{\xi})B_{f}(\hat{\xi},\lambda)\omega_{f}(\hat{\xi})d\hat{\xi}+\mathcal{H}_{f,-}(\beta_{f,-}),\\
Q_{f,-}^{u}(0)&\varphi_{f,-}(0,\lambda)=\beta_{f,-}\omega_{f}(0),
\end{split}
\end{equation*}
where
$$\|\mathcal{H}_{f,-}(\beta_{f,-})\|\leq C(\epsilon |\log\epsilon|+|\lambda|)^{2}|\beta_{f,-}|.$$

For $(ii)$, since $e^{-\eta\xi}\phi'_{a,\epsilon}$ is an eigenfunction of system \eqref{eq.shift} at $\lambda=0$, it follows that $e^{-\eta\xi}\phi'_{a,\epsilon}$ satisfies equation \eqref{eq.31} at $\lambda=0$ for some $\beta \in \mathbb{C}$. Set $\xi=0$,  we obtain
$$Q_{f,-}^{s}(0)\phi'_{a,\epsilon}(0)=\int_{-\infty}^{0}T_{f,-}^{s}(0,\hat{\xi})B_{f}(\hat{\xi},0)e^{-\eta\hat{\xi}}\phi'_{a,\epsilon}(\hat{\xi})d\hat{\xi}.$$
This proves the proposition.
\end{proof}

\subsubsection{Passage near the right slow manifold}
\hspace{0.13in}

This part focuses on the expressions of the solution of system \eqref{eq.shift} at the end of the orbit along the right critical manifold, and of the derivative of the pulse solution.

\begin{proposition}\label{pro.6}
Let $T_{j,\pm}^{u,s}(\xi,\hat{\xi})=T_{j,\pm}^{u,s}(\xi,\hat{\xi};a)$  be the $($un$)$stable evolution of system \eqref{eq.refuced eigenvalue} under the exponential dichotomy  established in Proposition \ref{pro.4}, and let the
associated projections be $Q_{j,\pm}^{u,s}(\xi)=Q_{j,\pm}^{u,s}(\xi;a)$, $j=f,~b$. The following statements hold.
\begin{itemize}
\item[$(i)$]
 There exist $\delta,~\epsilon_{0}>0$,  such that any solution $\varphi^{sl}(\xi,\lambda)$ to \eqref{eq.shift}  for $\lambda \in R_{1}(\delta)$ and $\epsilon\in(0,\epsilon_{0})$ satisfies
\begin{equation}\label{eq.varphi sl f}
\begin{split}
\varphi^{sl}(0,\lambda)&=\beta_{f}\omega_{f}(0)+\zeta_{f}Q_{f,+}^{s}(0)\varphi_{2}\\
&\qquad +\beta_{f}\int_{L_{\epsilon}}^{0}T_{f,+}^{u}(0,\hat{\xi})B_{f}(\hat{\xi},\lambda)\omega_{f}(\hat{\xi})d\hat{\xi}
                          +\mathcal{H}_{f}(\beta_{f},\zeta_{f},\beta_{b}),\\
Q_{f,-}^{u}(0)&\varphi^{sl}(0,\lambda)=\beta_{f}\omega_{f}(0),
\end{split}
\end{equation}
and
\begin{equation}\label{eq.varphi sl b}
\begin{split}
\varphi^{sl}(Z_{a,\epsilon},\lambda)&=\beta_{b}\omega_{b}(0)+\beta_{b}\int_{-L_{\epsilon}}^{0}T_{b,-}^{s}(0,\hat{\xi})B_{b}(\hat{\xi},\lambda)\omega_{b}(\hat{\xi})d\hat{\xi}
                          +\mathcal{H}_{b}(\beta_{f},\zeta_{f},\beta_{b}),\\
Q_{b,-}^{u}(0)\varphi^{sl}&(Z_{a,\epsilon},\lambda)=\beta_{b}\omega_{b}(0),
\end{split}
\end{equation}
for some $\beta_{f},\zeta_{f},\beta_{b}\in\mathbb{C}$, where $\mathcal{H}_{f}$ and $\mathcal{H}_{b}$ are linear maps satisfying the estimations
\[
\begin{split}
\|\mathcal{H}_{f}(\beta_{f},\zeta_{f},\beta_{b})\|&\leq C\left((\epsilon |\log\epsilon|+|\lambda|)|\zeta_{f}|
                                                     +(\epsilon |\log\epsilon|+|\lambda|)^{2}|\beta_{f}|+e^{-q/\epsilon}|\beta_{b}|\right),\\
\|\mathcal{H}_{b}(\beta_{f},\zeta_{f},\beta_{b})\|&\leq C\left((\epsilon |\log\epsilon|+|\lambda|)^{2}|\beta_{b}|+e^{-q/\epsilon}(|\beta_{f}|+|\zeta_{f}|)\right),
\end{split}
\]
with $q,~C>0$ independent of $\lambda,~a$ and $\epsilon$. Moreover, $\varphi^{sl}(\xi,\lambda)$ is analytic in $\lambda$.

\item[$(ii)$]
The derivative $\phi'_{a,\epsilon}$ of the pulse solution satisfies
\begin{equation}\label{eq.pulse solution}
\begin{split}
Q_{f,+}^{u}(0)\phi'_{a,\epsilon}(0)=&T_{f,+}^{u}(0,L_{\epsilon})e^{-\eta L_{\epsilon}}\phi'_{a,\epsilon}(L_{\epsilon})     \\
       &\qquad +\int_{L_{\epsilon}}^{0}T_{f,+}^{u}(0,\hat{\xi})B_{f}(\hat{\xi},0)e^{-\eta\hat{\xi}}\phi'_{a,\epsilon}(\hat{\xi})d\hat{\xi}.\\
Q_{b,-}^{s}(0)\phi'_{a,\epsilon}(Z_{a,\epsilon})=&T_{b,-}^{s}(0,-L_{\epsilon})e^{\eta L_{\epsilon}}\phi'_{a,\epsilon}(Z_{a,\epsilon}-L_{\epsilon})\\
       &\qquad +\int_{-L_{\epsilon}}^{0}T_{b,-}^{s}(0,\hat{\xi})B_{b}(\hat{\xi},0)e^{-\eta\hat{\xi}}\phi'_{a,\epsilon}(Z_{a,\epsilon}+\hat{\xi})d\hat{\xi}.\\
\end{split}
\end{equation}
\end{itemize}
\end{proposition}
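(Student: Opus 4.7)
The plan is to iterate the variation-of-constants / contraction-mapping argument of Proposition \ref{pro.5} on a short front passage $[0,L_\epsilon]$ and a short back passage $[Z_{a,\epsilon}-L_\epsilon,Z_{a,\epsilon}]$, and then glue the resulting local expressions through the slow-manifold dichotomy of Proposition \ref{pro.2} on the connecting interval $[L_\epsilon,Z_{a,\epsilon}-L_\epsilon]$. On each outer sub-interval the full shifted system \eqref{eq.shift} is treated as a perturbation of the corresponding reduced system \eqref{eq.refuced eigenvalue}, with the perturbation matrices $B_f(\xi,\lambda)$ and $B_b(\xi,\lambda)$ bounded in norm by $C(\epsilon|\log\epsilon|+|\lambda|)$ thanks to Theorem \ref{the.expression}$(i)$ and the fact that $\lambda\in R_1(\delta)$ is small.

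On $[0,L_\epsilon]$, I would use the $\mathbb{R}_+$-dichotomy of the reduced front system supplied by Proposition \ref{pro.4} and set up the integral equation
\[
\varphi(\xi)=\beta_f\omega_f(\xi)+\zeta_f T_{f,+}^s(\xi,0)\varphi_2+\int_0^{\xi}T_{f,+}^s(\xi,\hat\xi)B_f(\hat\xi,\lambda)\varphi(\hat\xi)d\hat\xi+\int_{L_\epsilon}^{\xi}T_{f,+}^u(\xi,\hat\xi)B_f(\hat\xi,\lambda)\varphi(\hat\xi)d\hat\xi,
\]
in which $(\beta_f,\zeta_f)\in\mathbb C^2$ parametrizes the two-dimensional stable subspace $R(Q_{f,+}^s(0))=\mbox{\rm Span}(\omega_f(0),\varphi_2)$ from \eqref{eq.projections space}. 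The Banach Contraction Theorem, with contraction constant of size $C(\epsilon|\log\epsilon|+|\lambda|)/\mu$ exactly as in Proposition \ref{pro.5}, produces a unique fixed point that is linear in $(\beta_f,\zeta_f)$ and analytic in $\lambda$. Evaluating at $\xi=0$ and substituting the Neumann series of the fixed-point equation once yields the leading three terms of \eqref{eq.varphi sl f}; iterating it once more pushes the $\beta_f$-dependent remainder into order $(\epsilon|\log\epsilon|+|\lambda|)^2|\beta_f|$ and the $\zeta_f$-dependent remainder into order $(\epsilon|\log\epsilon|+|\lambda|)|\zeta_f|$, and both are absorbed into $\mathcal H_f$. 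The projection identity $Q_{f,-}^u(0)\varphi^{sl}(0,\lambda)=\beta_f\omega_f(0)$ is then an immediate consequence of \eqref{eq.projections space}, since $\varphi_2$ and every perturbation integral built from $\omega_f$ lie in $R(Q_{f,-}^s(0))=\mbox{\rm Span}(\varphi_{1,f},\varphi_2)$ and are killed by $Q_{f,-}^u(0)$.

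A symmetric construction on $[Z_{a,\epsilon}-L_\epsilon,Z_{a,\epsilon}]$, now with a single parameter $\beta_b$ along $\omega_b(0)$, reproduces the analogous identity \eqref{eq.varphi sl b}. The coupling between the two outer pieces is delivered by Proposition \ref{pro.2}: the unstable-at-$L_\epsilon$ component fed by the front parameters and the stable-at-$(Z_{a,\epsilon}-L_\epsilon)$ component fed by $\beta_b$ must be transported across the slow-manifold interval, of length $\sim 1/\epsilon$, at the exponential dichotomy rate $\mu>0$, so their mutual contributions enter at most at order $Ce^{-\mu(Z_{a,\epsilon}-2L_\epsilon)}\leq Ce^{-q/\epsilon}$ for some $q>0$. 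This yields precisely the mixed bounds $e^{-q/\epsilon}|\beta_b|$ in $\mathcal H_f$ and $e^{-q/\epsilon}(|\beta_f|+|\zeta_f|)$ in $\mathcal H_b$. Part $(ii)$ follows by applying part $(i)$ to the bounded solution $e^{-\eta\xi}\phi'_{a,\epsilon}(\xi)$ of \eqref{eq.shift} at $\lambda=0$: projecting the fixed-point identity onto $R(Q_{f,+}^u(0))$ on the front passage and onto $R(Q_{b,-}^s(0))$ on the back passage produces the two boundary-plus-integral formulas in \eqref{eq.pulse solution}.

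The main technical obstacle I expect is the cross-estimate through the slow manifold. One must combine the fact, from Proposition \ref{pro.2}, that $\mathcal Q_r^s(L_\epsilon,\lambda)$ differs from the spectral projection by at most $C\epsilon|\log\epsilon|$ with the stable and unstable evolutions $T_{f,+}^{u,s}$ of the reduced front system, in order to prove that the unstable-at-$L_\epsilon$ component of the front fixed point is genuinely controlled by $\beta_b$ up to $O(e^{-q/\epsilon})$ rather than by some uncontrolled leakage from the slow manifold dynamics. Once that matching is executed carefully, the remaining bounds on $\mathcal H_f$ and $\mathcal H_b$ follow mechanically from a double iteration of the Neumann series and from the exponential smallness of the slow-manifold transition.
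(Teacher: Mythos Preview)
Your overall strategy---variation-of-constants on the two outer passages $[0,L_\epsilon]$ and $[Z_{a,\epsilon}-L_\epsilon,Z_{a,\epsilon}]$, glued through the slow-manifold dichotomy of Proposition~\ref{pro.2}---is exactly the paper's approach. However, there is a genuine gap in your local integral equations: the front fixed-point equation you wrote is missing the term $T_{f,+}^u(\xi,L_\epsilon)\alpha_f$ with $\alpha_f\in R(Q_{f,+}^u(L_\epsilon))$, and the back equation is likewise missing a term $T_{b,-}^s(\xi,-L_\epsilon)\alpha_b$ with $\alpha_b\in R(Q_{b,-}^s(-L_\epsilon))$. On a finite interval the solution space of a three-dimensional linear ODE is three-dimensional, so the family produced by your front equation---parametrized only by $(\beta_f,\zeta_f)$---is two-dimensional and does \emph{not} cover all solutions: indeed every fixed point of your equation has vanishing $Q_{f,+}^u(L_\epsilon)$-component at $\xi=L_\epsilon$ (each of the four pieces lies in $R(Q_{f,+}^s(L_\epsilon))$ there, and the unstable integral starts at $L_\epsilon$). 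Without these extra parameters the claim ``any solution $\varphi^{sl}$ satisfies \eqref{eq.varphi sl f}--\eqref{eq.varphi sl b}'' is not justified.

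More importantly, the $e^{-q/\epsilon}|\beta_b|$ contribution to $\mathcal H_f$ (and symmetrically the $e^{-q/\epsilon}(|\beta_f|+|\zeta_f|)$ contribution to $\mathcal H_b$) enters precisely \emph{through} these omitted terms. In the paper's argument one writes the slow-manifold solution on $I_r$ as $\mathcal T_r^u(\xi,Z_{a,\epsilon}-L_\epsilon,\lambda)\alpha_r+\mathcal T_r^s(\xi,L_\epsilon,\lambda)\beta_r$ and matches at both endpoints. Projecting the match at $L_\epsilon$ onto $\mathcal Q_r^u(L_\epsilon,\lambda)$ determines $\alpha_f$ in terms of $\mathcal T_r^u(L_\epsilon,Z_{a,\epsilon}-L_\epsilon,\lambda)\alpha_r$, which carries the factor $e^{-\mu(Z_{a,\epsilon}-2L_\epsilon)}\le e^{-q/\epsilon}$; and $\alpha_r$ in turn is fixed by the back matching, bringing in $|\beta_b|$. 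Your description of the coupling (``the unstable-at-$L_\epsilon$ component fed by the front parameters'') has the direction reversed and, in your setup, that component is identically zero. The fix is to reinstate $\alpha_f$ and $\alpha_b$, write the slow solution with its own parameters $(\alpha_r,\beta_r)$, impose the four matching conditions at $L_\epsilon$ and $Z_{a,\epsilon}-L_\epsilon$, and solve the resulting linear system to express $(\alpha_f,\alpha_b,\alpha_r,\beta_r)$ as functions of $(\beta_f,\zeta_f,\beta_b)$ with the stated bounds; after that, substituting back at $\xi=0$ and $\xi=Z_{a,\epsilon}$ yields \eqref{eq.varphi sl f}--\eqref{eq.varphi sl b} exactly. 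Your argument for part~$(ii)$ is fine once $(i)$ is repaired.
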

\begin{proof}
$(i)$.
Since the front $\phi_{f}(\xi)$ is a heteroclinic orbit connecting the equilibria  $(0,0)$ and $(1,0)$ of the layer system on $w=0$, and converges { to them at the exponential rate $\sqrt{\frac{k}{2}}F(0)$} as $\xi\rightarrow\pm\infty$,  the coefficient matrix $A_{f}(\xi)$ of \eqref{eq.refuced eigenvalue} converges at the exponential rate $\sqrt{\frac{k}{2}}F(0)$ to some asymptotic matrix $A_{f,\infty}$ as $\xi\rightarrow\infty$.
{Hence, by Lemma 3.4 of \cite{P1984} and its proof, the associated  projections $Q_{f,+}^{u,s}$ of the exponential dichotomy to system \eqref{eq.refuced eigenvalue} admit}
\begin{equation}\label{eq.35}
\begin{array}{ll}
\|Q_{f,+}^{u,s}(\xi)-P_{f}^{u,s}\|\leq C\left(e^{-\sqrt{\frac{k}{2}}F(0)\xi}+e^{-\mu\xi}\right),\quad ~\xi\geq 0.
\end{array}
\end{equation}
Here $P_{f}^{u,s}=P_{f}^{u,s}(a)$ is the spectral projection on the (un)stable eigenspace of the asymptotic matrix $A_{f,\infty}$.
At the endpoint $L_{\epsilon}=-\nu\log(\epsilon)$, the coefficient matrix $A(\xi,\lambda)$ of \eqref{eq.shift} satisfies
$$\|A(L_{\epsilon},\lambda)-A_{f,\infty}\|\leq C(\epsilon|\log\epsilon|+|\lambda|),$$
where we have used the fact that the coefficient matrix $A_{f}(\xi)$ of \eqref{eq.refuced eigenvalue} converges at the exponential rate $\sqrt{\frac{k}{2}}F(0)$ to the asymptotic matrix $A_{f,\infty}$ as $\xi\rightarrow\infty$ and $\nu\geq\sqrt{\frac{2}{k}}\frac{2}{F(0)}$.
{Therefore, the spectral projections associated with the matrices $A(L_{\epsilon},\lambda)$
and $A_{f,\infty}$ admit the same bound by continuity, namely,}
$$\|\mathcal{Q}_{r}^{u,s}(L_{\epsilon},\lambda)-P_{f}^{u,s}\|\leq C(\epsilon|\log\epsilon|+|\lambda|).$$
Combining this with \eqref{eq.35}, we obtain
\begin{equation}\label{eq.36}
\begin{array}{ll}
\|\mathcal{Q}_{r}^{u,s}(L_{\epsilon},\lambda)-Q_{f,+}^{u,s}(L_{\epsilon})\|\leq C(\epsilon|\log\epsilon|+|\lambda|).
\end{array}
\end{equation}
In a similar way, at $\xi=Z_{a,\epsilon}-L_{\epsilon}$, we obtain
\begin{equation*}\label{eq.37}
\begin{array}{ll}
\|\mathcal{Q}_{r}^{u,s}(Z_{a,\epsilon}-L_{\epsilon},\lambda)-Q_{b,-}^{u,s}(-L_{\epsilon})\|\leq C(\epsilon|\log\epsilon|+|\lambda|).
\end{array}
\end{equation*}

{By the variation of constants formula, any solution $\varphi_{f}^{sl}(\xi,\lambda)$, on $I_{f,+}$, to the shifted eigenvalue system \eqref{eq.shift} must satisfy}
\begin{equation}\label{eq.38}
\begin{split}
\varphi_{f}^{sl}(\xi,\lambda)=& T_{f,+}^{u}(\xi,L_{\epsilon})\alpha_{f}+\beta_{f}\omega_{f}(\xi)+\zeta_{f}T_{f,+}^{s}(\xi,0)\varphi_{2}\\
&+\int _{0}^{\xi}T_{f,+}^{s}(\xi,\hat{\xi})B_{f}(\hat{\xi},\lambda)\varphi_{f}^{sl}(\hat{\xi},\lambda)d\hat{\xi}+\int _{L_{\epsilon}}^{\xi}T_{f,+}^{u}(\xi,\hat{\xi})B_{f}(\hat{\xi},\lambda)\varphi_{f}^{sl}(\hat{\xi},\lambda)d\hat{\xi}
\end{split}
\end{equation}
for some  $\alpha_{f},~\zeta_{f}\in \mathbb{C}$ and $\alpha_{f} \in {\rm R}(Q_{f,+}^{u}(L_{\epsilon}))$.
By Theorem \ref{the.expression}$(i)$, the perturbed matrix $B_{f}(\xi,\lambda;a,\epsilon)$ has
\begin{equation}\label{eq.38.1}
\begin{array}{ll}
\|B_{f}(\xi,\lambda;a,\epsilon)\|\leq C(\epsilon|\log\epsilon|+|\lambda|),~\xi\in I_{f,+}.
\end{array}
\end{equation}
Then, by the contraction mapping principle, equation \eqref{eq.38} has a unique solution $\varphi_{f}^{sl}$ for all sufficiently small $|\lambda|,~\epsilon>0$. {Note that $\varphi_{f}^{sl}$ is linear in $(\alpha_{f},~\beta_{f},~\zeta_{f})$. By estimate \eqref{eq.38.1} yields}
\begin{equation}\label{eq.39}
\begin{array}{ll}
\sup\limits_{\xi\in[0,L_{\epsilon}]}\|\varphi_{f}^{sl}(\xi,\lambda)\|\leq C(\|\alpha_{f}\|+|\beta_{f}|+|\zeta_{f}|),
\end{array}
\end{equation}
taking $\delta,~\epsilon_{0}>0$ smaller if necessary.

Denote by $\mathcal{T}_{r}^{u,s}(\xi,\hat{\xi},\lambda)=\mathcal{T}_{r}^{u,s}(\xi,\hat{\xi},\lambda;a,\epsilon)$ the (un)stable evolution of system \eqref{eq.shift} under the exponential dichotomy on $I_{r}$ established in Proposition \ref{pro.2}. Then any solution $\varphi_{r}(\xi,\lambda)$ to \eqref{eq.shift} on $I_{r}$ is
\begin{equation}\label{eq.40}
\begin{array}{ll}
\varphi_{r}(\xi,\lambda)=\mathcal{T}_{r}^{u}(\xi,Z_{a,\epsilon}-L_{\epsilon},\lambda)\alpha_{r}+\mathcal{T}_{r}^{s}(\xi,L_{\epsilon},\lambda)\beta_{r},
\end{array}
\end{equation}
where $\alpha_{r}\in {\rm R}(\mathcal{Q}_{r}^{u}(Z_{a,\epsilon}-L_{\epsilon},\lambda))$ and $\beta_{r}\in {\rm R}(\mathcal{Q}_{r}^{s}(L_{\epsilon},\lambda))$.
Applying the projection $\mathcal{Q}_{r}^{u}(L_{\epsilon},\lambda)$ to $\varphi_{r}(L_{\epsilon},\lambda)-\varphi_{f}^{sl}(L_{\epsilon},\lambda)$, we obtain the matching condition
\begin{equation}\label{eq.41ab}
\begin{split}
\alpha_{f}&=\mathcal{H}_{1}(\alpha_{f},\beta_{f},\alpha_{r}),\\
\|\mathcal{H}_{1}(\alpha_{f},\beta_{f},\alpha_{r})\|& \leq C[(\epsilon|\log\epsilon|+|\lambda|)(\|\alpha_{f}\|+|\beta_{f}|+|\zeta_{f}|)+e^{-\frac{q}{\epsilon}}||\alpha_{r}|| ],
\end{split}
\end{equation}
by using \eqref{eq.36}, \eqref{eq.38.1}, \eqref{eq.39}, \eqref{eq.nu bound} and $Z_{a,\epsilon}=O(\epsilon^{-1})$.
Similarly, applying the projection $\mathcal{Q}_{r}^{s}(L_{\epsilon},\lambda)$ to $\varphi_{r}(L_{\epsilon},\lambda)-\varphi_{f}^{sl}(L_{\epsilon},\lambda)$ yields
\begin{equation}\label{eq.42}
\begin{split}
\beta_{f}& =\mathcal{H}_{2}(\alpha_{f},\beta_{f},\zeta_{f}),\\
\|\mathcal{H}_{2}(\alpha_{f},\beta_{f},\zeta_{f})\|& \leq C(\epsilon|\log\epsilon|+|\lambda|)(\|\alpha_{f}\|+|\beta_{f}|+|\zeta_{f}|).
\end{split}
\end{equation}

Consider the translated version \eqref{eq.b's perturbation } of system \eqref{eq.shift}. Then any solution $\varphi_{b}^{sl}(\xi,\lambda)$ to \eqref{eq.b's perturbation } on $[-L_{\epsilon},0]$ must satisfy
\begin{equation}\label{eq.43}
\begin{split}
\varphi_{b}^{sl}(\xi,\lambda)=& T_{b,-}^{s}(\xi,-L_{\epsilon})\alpha_{b}+\beta_{b}\omega_{b}(\xi)+\int _{0}^{\xi}T_{b,-}^{u}(\xi,\hat{\xi})B_{b}(\hat{\xi},\lambda)\varphi_{b}^{sl}(\hat{\xi},\lambda)d\hat{\xi}\\
&+\int _{-L_{\epsilon}}^{\xi}T_{b,-}^{s}(\xi,\hat{\xi})B_{b}(\hat{\xi},\lambda)\varphi_{b}^{sl}(\hat{\xi},\lambda)d\hat{\xi}
\end{split}
\end{equation}
for some  $\beta_{b}\in \mathbb{C}$ and $\alpha_{b} \in R(Q_{b,-}^{s}(-L_{\epsilon}))$ by the variation of constants formula.  According to Theorem \ref{the.expression}$(i)$ we obtain
\begin{equation}\label{eq.44}
\begin{array}{ll}
\|B_{b}(\xi,\lambda;a,\epsilon)\|\leq C(\epsilon|\log\epsilon|+|\lambda|),~~\xi
 \in I_{b,-}=[-L_{\epsilon},0].
\end{array}
\end{equation}
Then the contraction mapping principle verifies that there exists a unique solution
$\varphi_{b}^{sl}$ to equation \eqref{eq.43} for all sufficiently small $|\lambda|,~\epsilon>0$.
Note that $\varphi_{b}^{sl}$ is linear in $(\alpha_{b},~\beta_{b})$ and satisfies the estimation
\begin{equation}\label{eq.45}
\begin{array}{ll}
\sup\limits_{\xi\in[-L_{\epsilon},0]}\|\varphi_{b}^{sl}(\xi,\lambda)\|\leq C(\|\alpha_{b}\|+|\beta_{b}|)
\end{array}
\end{equation}
via \eqref{eq.44}, taking $\delta,~\epsilon_{0}>0$ smaller if necessary.
Similarly, applying the projections $\mathcal{Q}_{r}^{u}(Z_{a,\epsilon}-L_{\epsilon},\lambda)$ and $\mathcal{Q}_{r}^{s}(Z_{a,\epsilon}-L_{\epsilon},\lambda)$ to $\varphi_{r}(Z_{a,\epsilon}-L_{\epsilon},\lambda)-\varphi_{b}^{sl}(Z_{a,\epsilon}-L_{\epsilon},\lambda)$  respectively produce the matching conditions
\begin{equation}\label{eq.46}
\begin{split}
\alpha_{r}&=\mathcal{H}_{3}(\alpha_{b},\beta_{b}),\\
\|\mathcal{H}_{3}(\alpha_{b},\beta_{b})\| &\leq C(\epsilon|\log\epsilon|+|\lambda|)(\|\alpha_{b}\|+|\beta_{b}|),
\end{split}
\end{equation}
\begin{equation}\label{eq.47}
\begin{split}
\alpha_{b}&=\mathcal{H}_{4}(\alpha_{b},\beta_{b},\beta_{r}),\\
\|\mathcal{H}_{4}(\alpha_{b},\beta_{b},\beta_{r})\| &\leq C[(\epsilon|\log\epsilon|+|\lambda|)(\|\alpha_{b}\|+|\beta_{b}|)+e^{-\frac{q}{\epsilon}}\|\beta_{r}\|],
\end{split}
\end{equation}
where $\mathcal{H}_{3}$ and $\mathcal{H}_{4}$ are linear maps in their variables.

Next, we will combine these last results about the solution on $[0,Z_{a,\epsilon}]$ to obtain the
relevant conditions satisfied at $\xi=0$ and $\xi=Z_{a,\epsilon}$. Combining \eqref{eq.42} and \eqref{eq.47} yields
\begin{equation*}\label{eq.48}
\begin{split}
\alpha_{b}&=\mathcal{H}_{5}(\alpha_{b},\beta_{b},\alpha_{f},\beta_{f},\zeta_{f}),\\
\|\mathcal{H}_{5}(\alpha_{b},\beta_{b},\alpha_{f},\beta_{f},\zeta_{f})\|
&\leq C\left((\epsilon|\log\epsilon|+|\lambda|)(\|\alpha_{b}\|+|\beta_{b}|)+e^{-\frac{q}{\epsilon}}(\|\alpha_{f}\|+|\beta_{f}|+|\zeta_{f}|)\right),
\end{split}
\end{equation*}
which induce that for all sufficiently small $|\lambda|,~\epsilon>0$
\begin{equation}\label{eq.49}
\begin{split}
\alpha_{b}&=\alpha_{b}(\alpha_{f},\beta_{b},\beta_{f},\zeta_{f}),\\
\|\alpha_{b}(\alpha_{f},\beta_{b},\beta_{f},\zeta_{f})\|
&\leq C\left((\epsilon|\log\epsilon|+|\lambda|)|\beta_{b}|+e^{-\frac{q}{\epsilon}}(\|\alpha_{f}\|+|\beta_{f}|+|\zeta_{f}|)\right).
\end{split}
\end{equation}
By \eqref{eq.41ab}, \eqref{eq.46} and \eqref{eq.49}, we obtain a linear map $\mathcal{H}_{6}$  satisfying
\begin{equation*}\label{eq.50}
\begin{split}
\alpha_{f}&=\mathcal{H}_{6}(\alpha_{f},\beta_{b},\beta_{f},\zeta_{f}),\\
\|\mathcal{H}_{6}(\alpha_{f},\beta_{b},\beta_{f},\zeta_{f})\|
&\leq C\left((\epsilon|\log\epsilon|+|\lambda|)(\|\alpha_{f}\|+|\beta_{f}|+|\zeta_{f}|)+e^{-\frac{q}{\epsilon}}|\beta_{b}|\right),
\end{split}
\end{equation*}
which further imply that for all sufficiently small $|\lambda|,~\epsilon>0$
\begin{equation}\label{eq.51}
\begin{split}
\alpha_{f}&=\alpha_{f}(\beta_{f},\zeta_{f},\beta_{b}),\\
\|\alpha_{f}(\beta_{f},\zeta_{f},\beta_{b})\|
&\leq C\left(\epsilon|\log\epsilon|+|\lambda|)(|\beta_{f}|+|\zeta_{f}|)+e^{-\frac{q}{\epsilon}}|\beta_{b}|\right).
\end{split}
\end{equation}
Substituting \eqref{eq.51} into \eqref{eq.38} at $\xi=0$ gives
\begin{equation*}
\begin{split}
\varphi^{sl}_{f}(0,\lambda)=&\beta_{f}\omega_{f}(0)+\zeta_{f}Q_{f,+}^{s}(0)\varphi_{2}\\
                          &+
                            \beta_{f}\int_{L_{\epsilon}}^{0}T_{f,+}^{u}(0,\hat{\xi})B_{f}(\hat{\xi},\lambda)\omega_{f}(\hat{\xi})d\hat{\xi}+\mathcal{H}_{f}(\beta_{f},\zeta_{f},\beta_{b}),
\end{split}
\end{equation*}
where we have used \eqref{eq.projections space}, \eqref{eq.38.1} and \eqref{eq.39}. Applying the projection $Q_{f,-}^{u}(0)$ and \eqref{eq.projections space} to the above equation, one gets
\begin{equation*}
\begin{array}{ll}
Q_{f,-}^{u}(0)\varphi^{sl}_{f}(0,\lambda)=\beta_{f}\omega_{f}(0).
\end{array}
\end{equation*}
Thus, any solution $\varphi^{sl}$ to system \eqref{eq.shift} satisfies the entry condition \eqref{eq.varphi sl f}. Similarity, substituting \eqref{eq.51} into \eqref{eq.49} at $\xi=0$ gives
\begin{equation*}
\begin{split}
\alpha_{b}&=\alpha_{b}(\alpha_{f}(\beta_{f},\zeta_{f},\beta_{b}),\beta_{f},\zeta_{f},\beta_{b}),\\
\|\alpha_{b}(\alpha_{f}(\beta_{f},\zeta_{f},\beta_{b}),\beta_{f},\zeta_{f},\beta_{b})\|
&\leq C\left((\epsilon|\log\epsilon|+|\lambda|)|\beta_{b}|+e^{-\frac{q}{\epsilon}}(|\beta_{f}|+|\zeta_{f}|)\right).
\end{split}
\end{equation*}
Substituting the above expression of $\alpha_{b}$ and its associated estimation into \eqref{eq.43} at $\xi=0$, together with { \eqref{eq.projections space}}, \eqref{eq.44} and \eqref{eq.45}, verifies
\begin{equation*}
\varphi^{sl}_{b}(0,\lambda)=\beta_{b}\omega_{b}(0)+\beta_{b}\int_{-L_{\epsilon}}^{0}T_{b,-}^{s}(0,\hat{\xi})B_{b}(\hat{\xi},\lambda)\omega_{b}(\hat{\xi})d\hat{\xi}
                          +\mathcal{H}_{b}(\beta_{f},\zeta_{f},\beta_{b})
\end{equation*}
 Applying the projection $Q_{b,-}^{u}(0)$ and \eqref{eq.projections space} to the above equation shows
\begin{equation*}
Q_{b,-}^{u}(0)\varphi^{sl}_{b}(0,\lambda)=\beta_{b}\omega_{b}(0).
\end{equation*}
Thus, any solution $\varphi^{sl}$ to system \eqref{eq.shift} satisfies the condition \eqref{eq.varphi sl b}. {Since all quantities, the perturbed matrices $B_{j}(\xi,\lambda),~j=f,~b$, the evolution $\mathcal{T}(\xi,\hat{\xi},\lambda)$  of system \eqref{eq.shift} and the projections $\mathcal{Q}^{u,s}_{r}(\xi,\lambda)$ associated with the exponential dichotomy of \eqref{eq.shift}, involved in the above proofs depend analytically on $\lambda$, $\varphi^{sl}(\xi,\lambda)$ is analytic in $\lambda$.} The statement $(i)$ follows.

\noindent $(ii)$. Note that $e^{-\eta\xi}\phi'_{a,\epsilon}(\xi)$ is an eigenfunction of system \eqref{eq.shift} at $\lambda=0$. Thus there exist $\beta_{f,0},~\zeta_{f,0}\in \mathbb{C},~\alpha_{f,0}\in R(Q_{f,+}^{u}(L_{\epsilon}))$ such that \eqref{eq.38} holds at
$\lambda=0$ with $\varphi^{sl}_{f}(\xi,0)=e^{-\eta\xi}\phi'_{a,\epsilon}(\xi)$, where $(\alpha_{f},\beta_{f},\zeta_{f})=(\alpha_{f,0},\beta_{f,0},\zeta_{f,0})$. Applying the projection $Q_{f,+}^{u}(L_{\epsilon})$ and \eqref{eq.projections space} to \eqref{eq.38} at $\xi=L_{\epsilon}$ derives
$$\alpha_{f,0}=Q_{f,+}^{u}(L_{\epsilon})e^{-\eta L_{\epsilon}}\phi'_{a,\epsilon}(L_{\epsilon}),
$$
and acting the projection $Q_{f,+}^{u}(0)$ on \eqref{eq.38} at $\xi=0$  yields
\begin{equation*}
Q_{f,+}^{u}(0)\phi'_{a,\epsilon}(0)=T_{f,+}^{u}(0,L_{\epsilon})e^{-\eta L_{\epsilon}}\phi'_{a,\epsilon}(L_{\epsilon})
       +\int_{L_{\epsilon}}^{0}T_{f,+}^{u}(0,\hat{\xi})B_{f}(\hat{\xi},0)e^{-\eta\hat{\xi}}\phi'_{a,\epsilon}(\hat{\xi})d\hat{\xi}.
\end{equation*}
In a similar fashion,  there exist $\beta_{b,0}\in \mathbb{C},~\alpha_{b,0} \in {\rm R}(Q_{b,-}^{s}(-L_{\epsilon}))$ such that \eqref{eq.43} holds at $\lambda=0$  with $\varphi^{sl}_{b}(\xi,0)=e^{-\eta(\xi+Z_{a,\epsilon})}\phi'_{a,\epsilon}(\xi+Z_{a,\epsilon})$. Acting the projection $Q_{b,-}^{s}(-L_{\epsilon})$  on \eqref{eq.43} at $\xi=-L_{\epsilon}$,  together with \eqref{eq.projections space}, gives
$$
\alpha_{b,0}=Q_{b,-}^{s}(-L_{\epsilon})e^{-\eta(Z_{a,\epsilon}- L_{\epsilon})}\phi'_{a,\epsilon}(Z_{a,\epsilon}- L_{\epsilon}),
$$
and the projection $Q_{b,-}^{s}(0)$ on \eqref{eq.38} at $\xi=0$ forces
\begin{equation*}
\begin{split}
Q_{b,-}^{s}(0)\phi'_{a,\epsilon}(Z_{a,\epsilon})=&T_{b,-}^{s}(0,-L_{\epsilon})e^{\eta L_{\epsilon}}\phi'_{a,\epsilon}(Z_{a,\epsilon}-L_{\epsilon})\\
       &+\int_{-L_{\epsilon}}^{0}T_{b,-}^{s}(0,\hat{\xi})B_{b}(\hat{\xi},0)e^{-\eta\hat{\xi}}\phi'_{a,\epsilon}(Z_{a,\epsilon}+\hat{\xi})d\hat{\xi}.\\
\end{split}
\end{equation*}
Statement $(ii)$ follows. It completes the proof of the proposition.
\end{proof}

\subsubsection{Along the back}
\hspace{0.13in}

{Similar to the last subsection, here we study the properties of the solution along the heteroclinic orbit of the layer system on the layer $w=w_b$.}

\begin{proposition}\label{pro.7}
Let $T_{b,\pm}^{u,s}(\xi,\hat{\xi})=T_{b,\pm}^{u,s}(\xi,\hat{\xi};a)$  be the $($un$)$stable evolution of system \eqref{eq.refuced eigenvalue} under the exponential dichotomy  established in Proposition \ref{pro.4} and the
associated projections are $Q_{b,\pm}^{u,s}(\xi)=Q_{b,\pm}^{u,s}(\xi;a)$. The following statements hold.
\begin{itemize}
\item[$(i)$]
 There exist $\delta,~\epsilon_{0}>0$ such that any solution $\varphi_{b,+}(\xi,\lambda)$ to \eqref{eq.shift} decaying exponentially in forward time  for $\lambda \in R_{1}(\delta)$ and $\epsilon\in(0,\epsilon_{0})$ satisfies
\begin{equation}\label{eq.varphi b+}
\begin{split}
\varphi_{b,+}(Z_{a,\epsilon},\lambda)&=\beta_{b,+}\omega_{b}(0)+\zeta_{b,+}Q_{b,+}^{s}\varphi_{2}\\
                    &+\beta_{b,+}\int_{L_{\epsilon}}^{0}T_{b,+}^{u}(0,\hat{\xi})B_{b}(\hat{\xi},\lambda)\omega_{b}(\hat{\xi})d\hat{\xi}
                          +\mathcal{H}_{b,+}(\beta_{b,+},\zeta_{b,+}),\\
Q_{b,-}^{u}(0)\varphi_{b,+}&(Z_{a,\epsilon},\lambda)=\beta_{b,+}\omega_{b}(0),
\end{split}
\end{equation}
for some $\beta_{b,+},~\zeta_{b,+}\in\mathbb{C}$, where $\mathcal{H}_{b,+}$ is a linear map satisfying the estimate
$$
\|\mathcal{H}_{b,+}(\beta_{b,+},\zeta_{b,+})\|\leq C \left((\epsilon |\log\epsilon|+|\lambda|)|\zeta_{b,+}|
                                                        +(\epsilon |\log\epsilon|+|\lambda|)^{2}|\beta_{b}|\right),
                                                        $$
with $C>0$ {a constant} independent of $\lambda,~a$ and $\epsilon$. Moreover, $\varphi_{b,+}(\xi,\lambda)$ is analytic in $\lambda$.

\item[$(ii)$]
The derivative $\phi'_{a,\epsilon}$ of the pulse solution satisfies
\begin{equation}\label{eq.pulse solution}
\begin{split}
Q_{b,+}^{u}(0)\phi'_{a,\epsilon}(Z_{a,\epsilon})=&T_{b,+}^{u}(0,L_{\epsilon})e^{-\eta L_{\epsilon}}\phi'_{a,\epsilon}(Z_{a,\epsilon+L_{\epsilon}})\\
                &+\int_{L_{\epsilon}}^{0}T_{b,+}^{u}(0,\hat{\xi})B_{b}(\hat{\xi},0)e^{-\eta\hat{\xi}}\phi'_{a,\epsilon}(Z_{a,\epsilon}+\hat{\xi})d\hat{\xi}.
\end{split}
\end{equation}
\end{itemize}
\end{proposition}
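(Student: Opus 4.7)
The plan is to adapt the argument of Proposition \ref{pro.5}(i) to the back layer while enforcing exponential decay at $+\infty$ through the slow-manifold dichotomy on $I_l$ from Proposition \ref{pro.2}; this combines ingredients that have already appeared in Propositions \ref{pro.5} and \ref{pro.6}(i). First I would translate by $\tilde\xi=\xi-Z_{a,\epsilon}$, so that the back heteroclinic sits at the origin and the shifted system \eqref{eq.shift} becomes the perturbed reduced system \eqref{eq.b's perturbation }. On the translated interval $[0,L_\epsilon]$, Theorem \ref{the.expression}(i) yields $\|B_b(\tilde\xi,\lambda)\|\le C(\epsilon|\log\epsilon|+|\lambda|)$; Proposition \ref{pro.4} provides the exponential dichotomy of the reduced back system on $\mathbb R_+$ with evolution $T_{b,+}^{u,s}$, projections $Q_{b,+}^{u,s}$, bounded direction $\omega_b$, and extra stable direction $T_{b,+}^s(\cdot,0)\varphi_2$; and Proposition \ref{pro.2} provides the dichotomy of \eqref{eq.shift} on $I_l$, whose stable projection is $\epsilon|\log\epsilon|$-close to $\mathcal P$ at the interface $\xi=Z_{a,\epsilon}+L_\epsilon$.

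For (i), the variation of constants formula represents any solution of \eqref{eq.b's perturbation } on $[0,L_\epsilon]$ as
\begin{equation*}
\varphi(\tilde\xi,\lambda)=T_{b,+}^u(\tilde\xi,L_\epsilon)\alpha_b+\beta_{b,+}\omega_b(\tilde\xi)+\zeta_{b,+}T_{b,+}^s(\tilde\xi,0)\varphi_2+\int_0^{\tilde\xi}T_{b,+}^s(\tilde\xi,\hat\xi)B_b\varphi\,d\hat\xi+\int_{L_\epsilon}^{\tilde\xi}T_{b,+}^u(\tilde\xi,\hat\xi)B_b\varphi\,d\hat\xi,
\end{equation*}
with $\alpha_b\in R(Q_{b,+}^u(L_\epsilon))$ and $\beta_{b,+},\zeta_{b,+}\in\mathbb C$. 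The Banach contraction principle, exactly as in Proposition \ref{pro.6}(i), produces a unique $\varphi$ linear in $(\alpha_b,\beta_{b,+},\zeta_{b,+})$ with sup-norm controlled by $\|\alpha_b\|+|\beta_{b,+}|+|\zeta_{b,+}|$. Exponential decay at $+\infty$ is enforced by gluing $\varphi$ at $\tilde\xi=L_\epsilon$ to the stable half of the dichotomy of Proposition \ref{pro.2} on $I_l$; the asymptotic-matrix calculation of \eqref{eq.36} applied to $A_b$ gives the projection-closeness estimate $\|\mathcal{Q}_l^u(L_\epsilon,\lambda)-Q_{b,+}^u(L_\epsilon)\|\le C(\epsilon|\log\epsilon|+|\lambda|)$ once $\nu\ge\sqrt{2/k}\cdot 2/(F(w_b)U_2(w_b))$ from \eqref{eq.nu bound} is invoked, whence applying $\mathcal{Q}_l^u(L_\epsilon,\lambda)$ to the matching equation yields
\begin{equation*}
\alpha_b=\mathcal{K}(\alpha_b,\beta_{b,+},\zeta_{b,+}),\qquad \|\mathcal{K}\|\le C(\epsilon|\log\epsilon|+|\lambda|)(\|\alpha_b\|+|\beta_{b,+}|+|\zeta_{b,+}|),
\end{equation*}
uniquely solvable for $\alpha_b=\alpha_b(\beta_{b,+},\zeta_{b,+})$ of the same order. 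Substituting back at $\tilde\xi=0$ and using $T_{b,+}^s(0,0)\varphi_2=Q_{b,+}^s(0)\varphi_2$ produces the first line of \eqref{eq.varphi b+}; applying $Q_{b,-}^u(0)$ and invoking \eqref{eq.projections space}, whereby $\omega_b(0)$ spans $R(Q_{b,-}^u(0))$ while $\varphi_{1,b},\varphi_2\in R(Q_{b,-}^s(0))$, annihilates all remaining terms and yields the second line. Analyticity in $\lambda$ is inherited from $B_b$, $T_{b,\pm}^{u,s}$ and $\mathcal{Q}_l^{u,s}$.

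For (ii), I would substitute $\lambda=0$ and $\varphi(\tilde\xi,0)=e^{-\eta(\tilde\xi+Z_{a,\epsilon})}\phi'_{a,\epsilon}(\tilde\xi+Z_{a,\epsilon})$, which solves the shifted system at $\lambda=0$ and decays exponentially at $+\infty$ by Proposition \ref{pro.2}; then apply $Q_{b,+}^u(0)$ to the variation of constants identity at $\tilde\xi=0$. By \eqref{eq.projections space}, $Q_{b,+}^u(0)$ annihilates $\omega_b(0)$ and $\varphi_2$ and preserves the $T_{b,+}^u$-terms, isolating the stated formula after absorbing the overall factor $e^{-\eta Z_{a,\epsilon}}$ via the identification $\alpha_{b,+,0}=Q_{b,+}^u(L_\epsilon)e^{-\eta(L_\epsilon+Z_{a,\epsilon})}\phi'_{a,\epsilon}(Z_{a,\epsilon}+L_\epsilon)$. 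The main obstacle, as always in such slow--fast matching, is the uniform-in-$a$ projection-closeness estimate between $\mathcal{Q}_l^{u,s}(L_\epsilon,\lambda)$ and $Q_{b,+}^{u,s}(L_\epsilon)$: it rests on the exponential convergence of $A_b(\tilde\xi)$ to its asymptotic matrix with rate $\sqrt{k/2}F(w_b)U_2(w_b)$ combined with $L_\epsilon=-\nu\log\epsilon$, and it is precisely this bound that forces the $\alpha_b$-coupling, hence the remainder $\mathcal{H}_{b,+}$, to enjoy the quadratic-in-$(\beta_{b,+},\zeta_{b,+})$ size claimed in \eqref{eq.varphi b+}.
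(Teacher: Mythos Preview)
Your proposal is correct and follows essentially the same route as the paper: translate to the back layer, use the variation-of-constants representation on $[0,L_\epsilon]$ with data $(\alpha_b,\beta_{b,+},\zeta_{b,+})$, invoke the contraction mapping principle with the $B_b$-bound from Theorem \ref{the.expression}(i), enforce forward decay by matching against $\mathcal{Q}_l^u$ at the interface via the projection-closeness estimate, solve for $\alpha_b$, and then evaluate at $\tilde\xi=0$; part (ii) is likewise obtained by specializing to $\lambda=0$ and applying $Q_{b,+}^u(0)$. One small terminological slip: the remainder $\mathcal{H}_{b,+}$ is \emph{linear} in $(\beta_{b,+},\zeta_{b,+})$ with small coefficients (of order $(\epsilon|\log\epsilon|+|\lambda|)$ in $\zeta_{b,+}$ and $(\epsilon|\log\epsilon|+|\lambda|)^2$ in $\beta_{b,+}$), not quadratic as your last sentence suggests.
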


\begin{proof}
$(i)$. Consider the translated version \eqref{eq.b's perturbation } of system \eqref{eq.shift}, then any solution $\hat{\varphi}_{b,+}(\xi,\lambda)$ to \eqref{eq.b's perturbation } on $[0,~L_{\epsilon}]$ must
satisfy
\begin{equation}\label{eq.54}
\begin{split}
\hat{\varphi}_{b,+}(\xi,\lambda)=&T_{b,+}^{u}(\xi,L_{\epsilon})\alpha_{b,+}+\beta_{b,+}\omega_{b}(\xi)+\zeta_{b,+}T_{b,+}^{s}(\xi,0)\varphi_{2}\\
                &+\int_{0}^{\xi}T_{b,+}^{s}(\xi,\hat{\xi})B_{b}(\hat{\xi},\lambda)\hat{\varphi}_{b,+}(\hat{\xi},\lambda)d\hat{\xi}\\
                &\quad +\int_{L_{\epsilon}}^{\xi}T_{b,+}^{u}(\xi,\hat{\xi})B_{b}(\hat{\xi},\lambda)\hat{\varphi}_{b,+}(\hat{\xi},\lambda)d\hat{\xi},
\end{split}
\end{equation}
for some $\beta_{b,+},~\zeta_{b,+} \in \mathbb{C}$ and $\alpha_{b,+}\in {\rm R}(Q^{u}_{b,+}(L_{\epsilon}))$ by
the variation of constants formula. From Theorem \ref{the.expression}$(i)$, we derive
\begin{equation}\label{eq.55}
\begin{array}{ll}
\|B_{b}(\xi,\lambda;a,\epsilon)\|\leq C(\epsilon|\log \epsilon|+|\lambda|), ~\xi\in[0,L_{\epsilon}].
\end{array}
\end{equation}
Applying the contraction mapping principle to the functional equation \eqref{eq.54} yields a unique solution
$\hat{\varphi}_{b,+}$ for all $|\lambda|,~\epsilon>0$ sufficiently small.
Note that $\hat{\varphi}_{b,+}$ is linear in $(\alpha_{b,+},~\beta_{b,+},~\zeta_{b,+} )$ and satisfies the estimate
\begin{equation}\label{eq.56}
\begin{array}{ll}
\sup\limits_{\xi\in[0,L_{\epsilon}]} \|\hat{\varphi}_{b,+}(\xi,\lambda)\|\leq C(\|\alpha_{b,+}\|+|\beta_{b,+}|+|\zeta_{b,+}|),
\end{array}
\end{equation}
following \eqref{eq.55}, and taking $\delta,~\epsilon_{0}>0$ smaller if necessary.

By Proposition \ref{pro.2}, system \eqref{eq.shift} has the exponential dichotomy on $I_{l}=[Z_{a,\epsilon}+L_{\epsilon},\infty)$ with the associated projections $\mathcal{Q}_{l}^{u,s}(\xi,\lambda)$. Similar to the derivation of \eqref{eq.36} in the proof of Proposition \ref{pro.6}, we arrive at
\begin{equation}\label{eq.57}
\begin{array}{ll}
\|\mathcal{Q}_{l}^{u,s}(Z_{a,\epsilon}+L_{\epsilon},\lambda)-Q_{b,+}^{u,s}(L_{\epsilon})\|\leq C(\epsilon|\log\epsilon|+|\lambda|).
\end{array}
\end{equation}
Since any exponentially decaying solution of system \eqref{eq.shift} at $\xi=Z_{a,\epsilon}+L_{\epsilon}$ under the action of $\mathcal{Q}_{l}^{u}(Z_{a,\epsilon}+L_{\epsilon},\lambda)$ must be $0$, it follows that any solution $\varphi_{l}(\xi,\lambda)$ of system \eqref{eq.shift} decaying { exponentially} in forward time can be written as
\begin{equation}\label{eq.58}
\begin{array}{ll}
\varphi_{l}(\xi,\lambda)=\mathcal{T}_{l}^{s}(\xi,Z_{a,\epsilon}+L_{\epsilon},\lambda)\beta_{l},
\end{array}
\end{equation}
{with some $\beta_{l}\in {\rm R}(\mathcal{Q}_{l}^{s}(Z_{a,\epsilon}+L_{\epsilon},\lambda))$. Here $\mathcal{T}_{l}^{s}(\xi,\hat{\xi},\lambda)$ represents the stable evolution of system \eqref{eq.shift}.} Applying $\mathcal{Q}^{u}_{l}(Z_{a,\epsilon}+L_{\epsilon},\lambda)$ to $\hat{\varphi}_{b,+}(L_{\epsilon},\lambda)$ shows
\begin{equation}\label{eq.59}
\begin{split}
\alpha_{b,+}&=\mathcal{H}_{1}(\alpha_{b,+},\beta_{b,+},\zeta_{b,+}),\\
\|\mathcal{H}_{1}(\alpha_{b,+},\beta_{b,+},\zeta_{b,+})\|
&\leq C(\epsilon|\log\epsilon|+|\lambda|)(\|\alpha_{b,+}\|+|\beta_{b,+}|+|\zeta_{b,+}|),
\end{split}
\end{equation}
by using \eqref{eq.projections space}, \eqref{eq.55}, \eqref{eq.56} and \eqref{eq.57}. Therefore, solving \eqref{eq.59} for $\alpha_{b,+}$ yields
\begin{equation}\label{eq.60}
\begin{split}
\alpha_{b,+}&=\alpha_{b,+}(\beta_{b,+},\zeta_{b,+}),\\
\|\alpha_{b,+}(\beta_{b,+},\zeta_{b,+})\| &\leq C(\epsilon|\log\epsilon|+|\lambda|)(|\beta_{b,+}|+|\zeta_{b,+}|),
\end{split}
\end{equation}
for sufficiently small $|\lambda|,~\epsilon>0$. Substituting \eqref{eq.60} into \eqref{eq.54}, it holds
\begin{equation*}
\begin{split}
\hat{\varphi}_{b,+}(\xi,\lambda)=&T_{b,+}^{u}(\xi,L_{\epsilon})\alpha_{b,+}(\beta_{b,+},\zeta_{b,+})+\beta_{b,+}\omega_{b}(\xi)+\zeta_{b,+}T_{b,+}^{s}(\xi,0)\varphi_{2}\\
                &+\int_{0}^{\xi}T_{b,+}^{s}(\xi,\hat{\xi})B_{b}(\hat{\xi},\lambda)\hat{\varphi}_{b,+}(\hat{\xi},\lambda)d\hat{\xi}\\
                &+\int_{L_{\epsilon}}^{\xi}T_{b,+}^{u}(\xi,\hat{\xi})B_{b}(\hat{\xi},\lambda)\hat{\varphi}_{b,+}(\hat{\xi},\lambda)d\hat{\xi}.
\end{split}
\end{equation*}
Recall that
$$\hat{\varphi}_{b,+}(\xi-Z_{a,\epsilon},\lambda)=\varphi_{b,+}(\xi,\lambda),~\xi\in[Z_{a,\epsilon},Z_{a,\epsilon}+L_{\epsilon}].$$
Then, by using \eqref{eq.55}, \eqref{eq.56} and \eqref{eq.projections space}, we derive
\begin{equation*}
\begin{split}
\varphi_{b,+}(Z_{a,\epsilon},\lambda)=&\beta_{b,+}\omega_{b}(0)+\zeta_{b,+}Q_{b,+}^{s}\varphi_{2}\\
                    &+\beta_{b,+}\int_{L_{\epsilon}}^{0}T_{b,+}^{u}(0,\hat{\xi})B_{b}(\hat{\xi},\lambda)\omega_{b}(\hat{\xi})d\hat{\xi}
                     +\mathcal{H}_{b,+}(\beta_{b,+},\zeta_{b,+}).
\end{split}
\end{equation*}
{Since all quantities, the perturbed matrices $B_{b}(\xi,\lambda)$, the evolution $\mathcal{T}(\xi,\hat{\xi},\lambda)$ of system \eqref{eq.shift} and the projections $\mathcal{Q}^{u,s}_{l}(\xi,\lambda)$ associated with the exponential dichotomy of \eqref{eq.shift}, occurring in the above proofs analytically depend on $\lambda$, it induces that $\varphi_{b,+}(\xi,\lambda)$ is analytic in $\lambda$.}

\noindent $(ii)$. Similar to the proof of Proposition \ref{pro.6},  there exist $\beta_{b,+},~\zeta_{b,+}\in \mathbb{C}$ and $\alpha_{b,+}\in {\rm R}(Q_{b,+}^{u}(L_{\epsilon}))$ such that equation \eqref{eq.54} holds at
$\lambda=0$ with
\begin{equation}\label{eq.60.1}
\hat{\varphi}_{b,+}(\xi,0)=e^{-\eta(\xi+Z_{a,\epsilon})}\phi'_{a,\epsilon}(\xi+Z_{a,\epsilon}).
\end{equation}
Applying the projection $Q_{b,+}^{u}(L_{\epsilon})$ and \eqref{eq.projections space} to the above equation at $\xi=L_{\epsilon}$   derives
$$\alpha_{b,+}=Q_{b,+}^{u}(L_{\epsilon})e^{-\eta(L_{\epsilon}+Z_{a,\epsilon})}\phi'_{a,\epsilon}(L_{\epsilon}+Z_{a,\epsilon}).$$
Then, applying the projection $Q_{b,+}^{u}(0)$ to \eqref{eq.60.1} at $\xi=0$ gives \eqref{eq.pulse solution}.
This proves the proposition.
\end{proof}

\subsubsection{The matching procedure}
\hspace{0.13in}

In the previous section, we divided the real line $\mathbb{R}$ into three intervals and constructed a piecewise continuous, exponentially localized solution of system \eqref{eq.shift} for any $\lambda\in R_{1}(\delta)$. In the two discontinuous jumps at $\xi=0$ and $\xi=Z_{a,\epsilon}$, we get the expressions of the left and right limits of the solution, which are the entry and exit conditions {  along the right branch of the critical curve}. Determining existence of eigenvalues is now reduced to find $\lambda \in R_{1}(\delta)$ such that the exit and entry conditions match. After equalizing the exit condition and the entry condition, a single analytical matching equation in $\lambda$ can be obtained in the next result.

\begin{theorem}\label{the.lambda approximate}
There exist $\delta,~\epsilon_{0}>0$ such that for $\epsilon\in(0,\epsilon_{0})$ the shifted eigenvalue system \eqref{eq.shift} has precisely two different eigenvalues $\lambda_{0},~\lambda_{1}\in R_{1}(\delta)$.
\begin{itemize}
\item The eigenvalue $\lambda_{0}$ equals $0$ and the corresponding eigenspace is spanned by the solution $e^{-\eta\xi}\phi_{a,\epsilon}'(\xi)$ of system \eqref{eq.shift}.
\item The eigenvalue $\lambda_{1}$ is $a$--uniformly approximated by
\[ 
\lambda_{1}=-\frac{M_{b,2}}{M_{b,1}}+O(|\epsilon\log\epsilon|^2),
\] 
where
\begin{equation}\label{eq.Mb1 and Mb2}
\begin{split}
M_{b,1}&=\int_{-\infty}^{+\infty}F(w_{b})(u'_{b}(\xi))^{2}e^{-c_{0}F^{2}(w_{b})\xi}d\xi,\\
M_{b,2}&=\langle\Psi_{*},\ \phi'_{a,\epsilon}(Z_{a,\epsilon}-L_{\epsilon})\rangle,
\end{split}
\end{equation}
with
$$\Psi_{*}=\left(\begin{array}{c}
                 e^{c_{0}F^{2}(w_{b})L_{\epsilon}}v_{b}'(-L_{\epsilon}) \\
                 -e^{c_{0}F^{2}(w_{b})L_{\epsilon}}u_{b}'(-L_{\epsilon}) \\
                 \int_{\infty}^{-L_{\epsilon}} u_{b}'(z)e^{-c_{0}F^{2}(w_{b})z}\Delta_{b}(z)dz
               \end{array}
          \right).$$
The corresponding eigenspace associated to $\lambda_{1}$ is spanned by a solution $\varphi_{1}(\xi)$ to system \eqref{eq.shift} satisfying
\begin{equation}\label{eq.varphi1 bound}
\begin{split}
\|\varphi_{1}(\xi+Z_{a,\epsilon})-\omega_{b}(\xi)\|&\leq C\epsilon|\log\epsilon|,\ \ \ ~\xi\in[-L_{\epsilon},L_{\epsilon}],\\
\|\varphi_{1}(\xi+Z_{a,\epsilon})\|&\leq C\epsilon|\log\epsilon|,\ \ \ ~\xi\in\mathbb{R}\setminus[-L_{\epsilon},L_{\epsilon}],
\end{split}
\end{equation}
where $C>0$ is {  a constant} independent of $a$ and $\epsilon$. Moreover, $M_{b,1}$ and $M_{b,2}$ satisfy the bounds
$$1/C\leq M_{b,1}\leq C,~|M_{b,2}|\leq C\epsilon|\log\epsilon|.$$
\end{itemize}
\end{theorem}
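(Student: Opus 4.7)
My plan is a Lin's method matching procedure. I piece together exponentially bounded solutions of \eqref{eq.shift} on the three intervals $(-\infty,0]$, $[0,Z_{a,\epsilon}]$ and $[Z_{a,\epsilon},\infty)$ using Propositions \ref{pro.5}--\ref{pro.7}, and require matching at the two joints $\xi=0$ and $\xi=Z_{a,\epsilon}$. This produces a linear system in the six scalar parameters $\beta_{f,-},\beta_f,\zeta_f,\beta_b,\beta_{b,+},\zeta_{b,+}$ that is analytic in $\lambda\in R_1(\delta)$, and the eigenvalues in $R_1(\delta)$ are precisely those $\lambda$ for which the system admits a nontrivial solution. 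First, applying $Q_{f,-}^u(0)$, the projection onto $\mbox{\rm Span}(\omega_f(0))$, to the matching identity at $\xi=0$ forces $\beta_{f,-}=\beta_f$ via the unstable projection identities in Propositions \ref{pro.5}(i) and \ref{pro.6}(i); the analogous projection at $\xi=Z_{a,\epsilon}$ forces $\beta_b=\beta_{b,+}$. The remaining components split along $R(Q_{j,-}^s(0))=\mbox{\rm Span}(\varphi_{1,j},\varphi_2)$ from \eqref{eq.projections space}; the $\varphi_2$-components can be solved for $\zeta_f,\zeta_{b,+}$ in terms of $(\beta_f,\beta_b,\lambda)$ with error $O(\epsilon|\log\epsilon|+|\lambda|)$. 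What remains is a $2\times 2$ matching system on $(\beta_f,\beta_b)$ whose entries are computed by pairing the $\varphi_{1,j}$-components with the adjoint bounded solutions $\psi_{j,ad}$, producing Melnikov-type integrals.

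\textbf{Melnikov computation.} Along the front, the Melnikov integral $\int_{-\infty}^{\infty}\langle\psi_{f,ad},B_f(\cdot,\lambda)\omega_f\rangle\,d\xi$ vanishes at $\lambda=\epsilon=0$ because $\omega_f$ already spans the bounded kernel of the reduced front operator, so this integral is proportional to $\lambda$ to leading order with a uniformly nonzero coefficient. Along the back the analogous integral is affine in $\lambda$:
\[
\int_{-\infty}^{\infty}\langle\psi_{b,ad}(\xi),B_b(\xi,\lambda)\omega_b(\xi)\rangle\,d\xi \;=\; \lambda M_{b,1}+M_{b,2}+O(|\epsilon\log\epsilon|^2),
\]
where a direct calculation using the explicit forms of $\omega_b$, $\psi_{b,ad}$ and the $\lambda$-entry of $B_b$ yields
\[
M_{b,1}=\int_{-\infty}^{+\infty}F(w_b)(u_b'(\xi))^{2}e^{-c_0F^2(w_b)\xi}\,d\xi,
\]
and the $\lambda$-independent boundary term emerges as $M_{b,2}=\langle\Psi_*,\phi_{a,\epsilon}'(Z_{a,\epsilon}-L_\epsilon)\rangle$ after using Proposition \ref{pro.6}(ii) to replace hidden $e^{-\eta\xi}\phi_{a,\epsilon}'$ terms by their boundary data at $\xi=Z_{a,\epsilon}-L_\epsilon$. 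The uniform bounds $1/C\le M_{b,1}\le C$ follow from positivity of $(u_b')^2$ together with boundedness of $F(w_b)$, while $|M_{b,2}|\le C\epsilon|\log\epsilon|$ follows from Theorem \ref{the.expression}(i).

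\textbf{Roots and main obstacle.} Combining these computations, the determinant of the reduced $2\times 2$ system takes the form
\[
E(\lambda;\epsilon)=\lambda\cdot c(\lambda;\epsilon)\cdot(\lambda M_{b,1}+M_{b,2})+R(\lambda;\epsilon),
\]
with $c(\lambda;\epsilon)$ uniformly bounded away from zero and $|R(\lambda;\epsilon)|\le C(|\epsilon\log\epsilon|^2+e^{-q/\epsilon})$ on $R_1(\delta)$. Since $e^{-\eta\xi}\phi_{a,\epsilon}'(\xi)$ solves \eqref{eq.shift} at $\lambda=0$, the value $\lambda_0=0$ is known a priori to be a root of $E$, which lets me factor $E(\lambda;\epsilon)=\lambda\,G(\lambda;\epsilon)$ exactly; Rouch\'e's theorem applied to $G$ on $R_1(\delta)$ then isolates exactly one further root $\lambda_1=-M_{b,2}/M_{b,1}+O(|\epsilon\log\epsilon|^2)$. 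The eigenfunction bounds \eqref{eq.varphi1 bound} follow by substituting $\lambda=\lambda_1$ back into Propositions \ref{pro.5}--\ref{pro.7}, noticing that the dominant parameter is $\beta_b=\beta_{b,+}\ne 0$ while all others are $O(\epsilon|\log\epsilon|)|\beta_b|$. The main obstacle is the explicit identification of $M_{b,2}$ with the inner product in \eqref{eq.Mb1 and Mb2}: this step uses Proposition \ref{pro.6}(ii) to convert the integral representation of $\phi_{a,\epsilon}'$ across the right slow manifold into boundary data, together with the upper-triangular block structure of $A_b$ in \eqref{eq.j=f,b matrx} and the variation of constants formula to compute the third component of $\Psi_*$ explicitly.
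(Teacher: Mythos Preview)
Your proposal is correct and follows essentially the same approach as the paper: Lin's method matching at $\xi=0$ and $\xi=Z_{a,\epsilon}$ via Propositions \ref{pro.5}--\ref{pro.7}, elimination of $\beta_{f,-}=\beta_f$, $\beta_{b,+}=\beta_b$ by the unstable projections, solving for $\zeta_f,\zeta_{b,+}$ along $\varphi_2$, and reducing to a $2\times2$ system whose Melnikov entries are computed by pairing with the full three-dimensional adjoint $\varphi_{j,\perp}$ (your $\psi_{j,ad}$ is its two-dimensional top block; the third component is exactly what produces $\Psi_*$ via \eqref{eq.Q projections}). The only cosmetic difference is that the paper applies Rouch\'e directly to the full determinant $D(\lambda)$ against the comparison function $\lambda M_f(\lambda M_{b,1}+M_{b,2})$ to count two roots and then identifies one as $0$ a posteriori, whereas you factor $E(\lambda)=\lambda G(\lambda)$ first using $E(0)=0$ and apply Rouch\'e to $G$; both routes are valid and yield the same $O(|\epsilon\log\epsilon|^2)$ localization of $\lambda_1$.
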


\begin{proof}
By Theorem \ref{the.expression}$(i)$, we derive
\begin{equation}\label{eq.63}
\begin{array}{ll}
\|B_{f}(\xi,\lambda;a,\epsilon)\|\leq C(\epsilon|\log\epsilon|+|\lambda|),\ \ ~\xi\in(-\infty,L_{\epsilon}],\\
\|B_{b}(\xi,\lambda;a,\epsilon)\|\leq C(\epsilon|\log\epsilon|+|\lambda|),\ \ ~\xi\in[-L_{\epsilon},L_{\epsilon}].
\end{array}
\end{equation}
Then it follows that
\begin{equation*}\label{eq.64}
\begin{split}
\left\| \left(\begin{array}{c}
    \phi'_{f}(\xi) \\
    0
  \end{array}
  \right)
-\phi'_{a,\epsilon}(\xi) \right\|
&\leq C\epsilon|\log\epsilon|,\ \ \ ~\xi\in(-\infty,L_{\epsilon}],\\
\left\| \left(\begin{array}{c}
    \phi'_{b}(\xi) \\
    0
  \end{array}
  \right)
-\phi'_{a,\epsilon}(Z_{a,\epsilon}+\xi) \right\|
&\leq C\epsilon|\log\epsilon|,\ \ \ ~\xi\in[-L_{\epsilon},L_{\epsilon}].
\end{split}
\end{equation*}
{According to Proposition \ref{pro.5}, any exponential decay solution $\varphi_{f,-}(\xi,\lambda)$ of system \eqref{eq.shift} in backward time satisfies \eqref{eq.varphi f-} at $\xi=0$ with some constant $\beta_{f,-}\in { \mathbb{C}}$.
Then, by Proposition \ref{pro.6}, it follows that there exist some $\beta_{f},~\zeta_{f}\in {\mathbb{C}}$ and $\beta_{b}\in { \mathbb{C}}$ such that any solution $\varphi^{sl}(\xi,\lambda)$ to \eqref{eq.shift} satisfies  \eqref{eq.varphi sl f} at $\xi=0$ and satisfies \eqref{eq.varphi sl b} at $\xi=Z_{a,\epsilon}$, respectively.
Finally, from Proposition \ref{pro.7} any  exponential decay solution $\varphi_{b,+}(\xi,\lambda)$ of system \eqref{eq.shift} in forward time satisfies \eqref{eq.varphi b+} at $\xi=Z_{a,\epsilon}$ with some $\beta_{b,+},~\zeta_{b,+}\in { \mathbb{C}}$.}
Next, we will match the solutions { $\varphi_{f,-}$ and $\varphi^{sl}$ at $\xi=0$, and match $\varphi^{sl}$ and $\varphi_{b,+}$} at $\xi=Z_{a,\epsilon}$. To do so is sufficient to require that
\begin{equation}\label{exmatch}
\begin{split}
Q_{f,-}^{u,s}(0)&\left(\varphi_{f,-}(0,\lambda)-\varphi^{sl}(0,\lambda)\right)=0,\\
Q_{b,-}^{u,s}(0)&\left(\varphi^{sl}(Z_{a,\epsilon},\lambda)-\varphi_{b,+}(Z_{a,\epsilon},\lambda)\right)=0.
\end{split}
\end{equation}
To solve these two equations, we need their concrete expressions.

{By computing the expressions of
$Q_{f,-}^{u}(0)\left(\varphi_{f,-}(0,\lambda)-\varphi^{sl}(0,\lambda)\right)=0$ and of
$Q_{b,-}^{u}(0)$$ \left(\varphi^{sl}(Z_{a,\epsilon},\lambda)-\varphi_{b,+}(Z_{a,\epsilon},\lambda)\right)=0$,
one can immediately obtain $\beta_{f}=\beta_{f,-}$ and $\beta_{b}=\beta_{b,+}$ by using \eqref{eq.varphi f-}, \eqref{eq.varphi sl f}, \eqref{eq.varphi sl b} and \eqref{eq.varphi b+}.  }

Next we consider the matching conditions \eqref{exmatch} with $s$. We define the vector
$$\varphi_{j,\perp}:=  \varphi_{1,j}-\int_{\infty}^{0}e^{-\eta\xi}\langle \psi_{j,ad}(\xi),F(\xi) \rangle d\xi ~\varphi_{2},\ \ \ ~j=f,~b.$$
Since ${\rm R}(Q_{j,-}^{s}(0))=\mbox{\rm Span}(\varphi_{1,j},\varphi_{2})$, it holds that $\varphi_{j,\perp}$ and $\varphi_{2}$ also span ${\rm R}(Q_{j,-}^{s}(0))$.
Moreover, some calculations show that
$$\varphi_{j,\perp}\in \mbox{\rm Ker}(Q_{j,+}^{s}(0)^{*})={\rm R}(Q_{j,+}^{u}(0)^{*})\subset {\rm R}(Q_{j,-}^{s}(0)^{*}),\ \ \ ~j=f,~b.$$
Then equations \eqref{exmatch} with $s$
can be rewritten as
\begin{equation}\label{exmatch1}
\begin{split}
&\left\langle \varphi_{2},\varphi_{f,-}(0,\lambda)-\varphi^{sl}(0,\lambda)  \right\rangle=0,\\
&\left\langle \varphi_{2},\varphi^{sl}(Z_{a,\epsilon},\lambda)-\varphi_{b,+}(Z_{a,\epsilon},\lambda) \right\rangle=0,\\
&\left\langle \varphi_{f,\perp},\varphi_{f,-}(0,\lambda)-\varphi^{sl}(0,\lambda)  \right\rangle=0, \\
&\left\langle \varphi_{b,\perp},\varphi^{sl}(Z_{a,\epsilon},\lambda)-\varphi_{b,+}(Z_{a,\epsilon},\lambda) \right\rangle=0.
\end{split}
\end{equation}
By the identities \eqref{eq.varphi f-}, \eqref{eq.varphi sl f}, \eqref{eq.varphi sl b} and \eqref{eq.varphi b+}, we can further write the first two equations as
\begin{equation}\label{eq.65}
\begin{split}
0&=\langle \varphi_{2},\varphi_{f,-}(0,\lambda)-\varphi^{sl}(0,\lambda)  \rangle=-\zeta_{f}+\mathcal{H}_{1}(\beta_{b},\beta_{f},\zeta_{f}),\\
0&=\langle \varphi_{2},\varphi^{sl}(Z_{a,\epsilon},\lambda)-\varphi_{b,+}(Z_{a,\epsilon},\lambda) \rangle=-\zeta_{b,+}+\mathcal{H}_{2}(\beta_{b},\beta_{f},\zeta_{f},\zeta_{b,+}),
\end{split}
\end{equation}
where
\begin{equation*}
\begin{split}
|\mathcal{H}_{1}(\beta_{b},\beta_{f},\zeta_{f})| &\leq C((\epsilon|\log\epsilon|+|\lambda|)(|\beta_{f}|+|\zeta_{f}|)+e^{-q/\epsilon}|\beta_{b}|),    \\
|\mathcal{H}_{2}(\beta_{b},\beta_{f},\zeta_{f},\zeta_{b,+})|  &\leq C((\epsilon|\log\epsilon|+|\lambda|)(|\beta_{b}|+|\zeta_{b,+}|)+e^{-q/\epsilon}(|\beta_{f}|+|\zeta_{f}|)),
\end{split}
\end{equation*}
with $q>0$ {  a constant} independent of $\lambda,~a,~\epsilon$. Hence  system \eqref{eq.65} is solvable for $\zeta_{f}$ and $\zeta_{b,+}$, provided that $|\lambda|,~\epsilon>0$ are sufficiently small, and the solutions satisfy
\begin{equation}\label{eq.66}
\begin{split}
\zeta_{f}=\zeta_{f}(\beta_{b},&\beta_{f}),~\zeta_{b,+}=\zeta_{b,+}(\beta_{b},\beta_{f}),\\
|\zeta_{f}(\beta_{b},\beta_{f})|  &\leq C\left((\epsilon|\log\epsilon|+|\lambda|)|\beta_{f}|+e^{-q/\epsilon}|\beta_{b}|\right),\\
|\zeta_{b,+}(\beta_{b},\beta_{f})| &\leq C\left((\epsilon|\log\epsilon|+|\lambda|)|\beta_{b}|+e^{-q/\epsilon}|\beta_{f}|\right).
\end{split}
\end{equation}
Combining \eqref{eq.varphi f-}, \eqref{eq.varphi sl f} and \eqref{eq.nu bound} with $\varphi_{f,\perp}\in {\rm R}(Q_{f,+}^{u}(0)^{*})$, the third equation of \eqref{exmatch1} can be written in
\begin{equation}\label{eq.67}
\begin{split}
0&=\langle \varphi_{f,\perp},\varphi_{f,-}(0,\lambda)-\varphi^{sl}(0,\lambda)  \rangle \\
 &=\beta_{f}\int_{-L_{\epsilon}}^{L_{\epsilon}} \langle T_{f}(0,\hat{\xi})^{*}\varphi_{f,\perp},B_{f}(\hat{\xi},\lambda)\omega_{f}(\hat{\xi}) \rangle d\hat{\xi}+\mathcal{H}_{3}(\beta_{f},\beta_{b}),
\end{split}
\end{equation}
where
\begin{equation*}
\begin{array}{ll}
|\mathcal{H}_{3}(\beta_{f},\beta_{b})|\leq C\left((\epsilon|\log\epsilon|+|\lambda|)^{2}|\beta_{f}|+e^{-q/\epsilon}|\beta_{b}|\right).
\end{array}
\end{equation*}
Similarity, by using \eqref{eq.varphi sl b}, \eqref{eq.varphi b+}, \eqref{eq.nu bound} and $\varphi_{b,\perp}\in {\rm R}(Q_{b,+}^{u}(0)^{*})$, the fourth equation of \eqref{exmatch1} can be written in
\begin{equation}\label{eq.68}
\begin{split}
0&=\left\langle \varphi_{b,\perp},\varphi^{sl}(Z_{a,\epsilon},\lambda)-\varphi_{b,+}(Z_{a,\epsilon},\lambda)  \right\rangle \\
 &=\beta_{b}\int_{-L_{\epsilon}}^{L_{\epsilon}} \left\langle T_{b}(0,\hat{\xi})^{*}\varphi_{b,\perp},B_{b}(\hat{\xi},\lambda)\omega_{b}(\hat{\xi}) \right\rangle d\hat{\xi}+\mathcal{H}_{4}(\beta_{f},\beta_{b}),
\end{split}
\end{equation}
where
\begin{equation*}
|\mathcal{H}_{4}(\beta_{f},\beta_{b})|\leq C \left((\epsilon|\log\epsilon|+|\lambda|)^{2}|\beta_{b}|+e^{-q/\epsilon}|\beta_{f}|\right).
\end{equation*}
To present clear approximate expressions of \eqref{eq.67} and \eqref{eq.68}, we first give the following approximations
\begin{equation}\label{eq.69}
\begin{split}
0&=\left\langle \varphi_{f,\perp},\phi_{a,\epsilon}'(0)- \phi_{a,\epsilon}'(0) \right\rangle\\
 &=\left\langle \varphi_{f,\perp},Q_{f,-}^{s}(0)\phi_{a,\epsilon}'(0)- Q_{f,+}^{u}(0)\phi_{a,\epsilon}'(0) \right\rangle \\
 &=\int_{-L_{\epsilon}}^{L_{\epsilon}} \left\langle e^{-\eta\hat{\xi}}T_{f}(0,\hat{\xi})^{*}\varphi_{f,\perp},B_{f}(\hat{\xi},0)\phi_{a,\epsilon}'(\hat{\xi}) \right\rangle d\hat{\xi}+O(\epsilon^{2}),
\end{split}
\end{equation}
\begin{equation}\label{eq.70}
\begin{split}
0&=\left\langle \varphi_{b,\perp},\phi_{a,\epsilon}'(Z_{a,\epsilon})- \phi_{a,\epsilon}'(Z_{a,\epsilon}) \right\rangle \\
 &=\left\langle \varphi_{b,\perp},Q_{b,-}^{s}(0)\phi_{a,\epsilon}'(Z_{a,\epsilon})- Q_{b,+}^{u}(0)\phi_{a,\epsilon}'(Z_{a,\epsilon}) \right\rangle \\
 &=\int_{-L_{\epsilon}}^{L_{\epsilon}} \left\langle e^{-\eta\hat{\xi}}T_{b}(0,\hat{\xi})^{*}\varphi_{b,\perp},B_{b}(\hat{\xi},0)\phi_{a,\epsilon}'(Z_{a,\epsilon}+\hat{\xi}) \right\rangle d\hat{\xi}\\
 &\ \ \ \ \ ~~~~ + \langle e^{\eta L_{\epsilon}}T_{b}(0,-L_{\epsilon})^{*}\varphi_{b,\perp},\phi_{a,\epsilon}'(Z_{a,\epsilon}-L_{a,\epsilon}) \rangle+O(\epsilon^{2}).
\end{split}
\end{equation}
Next, we simplify the expressions in \eqref{eq.67} and \eqref{eq.68} by \eqref{eq.69} and \eqref{eq.70}. Direct calculations yield
\begin{equation}\label{eq.71}
\begin{split}
e^{-\eta\xi}T_{j}(0,\xi)^{*}\varphi_{j,\perp}&=\left(\begin{array}{c}
 \displaystyle                                                     e^{-\eta\xi}\psi_{j,ad}(\xi) \\
 \displaystyle                                                      \int_{\xi}^{\infty}e^{-\eta z}\langle \psi_{j,ad}(z),F_{j}(z) \rangle dz
                                                    \end{array}
                                                    \right)\\
&=\left(\begin{array}{c}
               \displaystyle                                        e^{-c_{0}F^{2}(w_{j})\xi}v_{j}'(\xi) \\
                             \displaystyle                          -e^{-c_{0}F^{2}(w_{j})\xi}u_{j}'(\xi) \\
                                           \displaystyle            \int^{\xi}_{\infty} e^{-c_{0}F^{2}(w_{j}){z}}u_{j}'(z)\Delta_{j}(z) dz
                                                    \end{array}
                                                    \right),~\xi\in \mathbb{R},~j=f,~b.
\end{split}
\end{equation}
Recall that $\phi_{f}'(\xi)$  converges to $0$ at the exponential rate $F(0)\sqrt{\frac{k}{2}}$ as $\xi\rightarrow\pm\infty$, and
$\phi_{b}'(\xi)$  converges to $0$ at the exponential rate $F(w_{b})U_{2}(w_{b})\sqrt{\frac{k}{2}}$ as $\xi\rightarrow\pm\infty$.
Note that $c_{0}=\frac{\sqrt{2k}}{F(0)}(\frac{1}{2}-a)$, and $w_{b}$ satisfies \eqref{eq.wb and w0 relation}.
{Thus, for all $a\geq 0$, there exists an $a$--independent constant $C>0$ such that the upper two entries of \eqref{eq.71} are bounded by $C$ on $\mathbb{R}$, and the last entry is bounded by $C|\log \epsilon|$  on $[-L_{\epsilon},L_{\epsilon}]$.}
Combining these bounds together with \eqref{eq.nu bound}, \eqref{eq.69} and \eqref{eq.71}, we get the next $a$--uniform approximation
\begin{equation}\label{eq.72}
\begin{split}
 &\int_{-L_{\epsilon}}^{L_{\epsilon}} \left\langle  T_{f}(0,\xi)^{*}\varphi_{f,\perp},B_{f}(\xi,\lambda)\omega_{f}(\xi)   \right\rangle d\xi\\
=&\int_{-L_{\epsilon}}^{L_{\epsilon}} \left\langle  e^{-\eta\xi}T_{f}(0,\xi)^{*}\varphi_{f,\perp},B_{f}(\xi,\lambda)\phi_{a,\epsilon}'(\xi)   \right\rangle d\xi+O(|\epsilon\log\epsilon|^{2})\\
=&\int_{-L_{\epsilon}}^{L_{\epsilon}} \left\langle  e^{-\eta\xi}T_{f}(0,\xi)^{*}\varphi_{f,\perp},B_{f}(\xi,0)\phi_{a,\epsilon}'(\xi)   \right\rangle d\xi \\
&\ \ \ \ \ -\lambda \int_{-L_{\epsilon}}^{L_{\epsilon}} F(0) e^{-c_{0}F^{2}(0)\xi}(u_{f}'(\xi))^{2}d\xi +O(|\epsilon\log\epsilon|^{2})\\
=&-\lambda \int_{-\infty}^{\infty} F(0) e^{-c_{0}F^{2}(0)\xi}(u_{f}'(\xi))^{2}d\xi+O(|\epsilon\log\epsilon|^{2}).
\end{split}
\end{equation}
By similar calculations together with \eqref{eq.nu bound}, \eqref{eq.70} and \eqref{eq.71}, one has the next $a$--uniform approximation
\begin{equation}\label{eq.73}
\begin{split}
 &\int_{-L_{\epsilon}}^{L_{\epsilon}} \left\langle  T_{b}(0,\xi)^{*}\varphi_{b,\perp},B_{b}(\xi,\lambda)\omega_{b}(\xi)   \right\rangle d\xi\\
=&\int_{-L_{\epsilon}}^{L_{\epsilon}} \left\langle  e^{-\eta\xi}T_{b}(0,\xi)^{*}\varphi_{b,\perp},B_{b}(\xi,\lambda)\phi_{a,\epsilon}'(Z_{a,\epsilon}+\xi)   \right\rangle d\xi+O(|\epsilon\log\epsilon|^{2})\\
=&\int_{-L_{\epsilon}}^{L_{\epsilon}} \left\langle  e^{-\eta\xi}T_{b}(0,\xi)^{*}\varphi_{b,\perp},B_{b}(\xi,0)\phi_{a,\epsilon}'(Z_{a,\epsilon}+\xi)   \right\rangle d\xi\\
&\ \ \ -\lambda \int_{-L_{\epsilon}}^{L_{\epsilon}} F(w_{b}) e^{-c_{0}F^{2}(w_{b})\xi}(u_{b}'(\xi))^{2}d\xi +O(|\epsilon\log\epsilon|^{2})\\
=&-\left\langle e^{\eta L_{\epsilon}}T_{b}(0,-L_{\epsilon})^{*}\varphi_{b,\perp},\phi'_{a,\epsilon}(Z_{a,\epsilon}-L_{\epsilon}) \right\rangle \\
 &\ \ \ -\lambda \int_{-\infty}^{\infty} F(w_{b}) e^{-c_{0}F^{2}(w_{b})\xi}(u_{b}'(\xi))^{2}d\xi+O(|\epsilon\log\epsilon|^{2}).
\end{split}
\end{equation}
Using \eqref{eq.72} and \eqref{eq.73}, the matching conditions \eqref{eq.67} and \eqref{eq.68} can be written in the next form
\begin{equation}\label{eq.74}
\begin{split}
&\left(
        \begin{array}{cc}
          \lambda M_{f}+O((\epsilon|\log\epsilon|+|\lambda|)^{2}) & O(e^{-q/\epsilon}) \\
         O(e^{-q/\epsilon})  & -\lambda M_{b,1}-M_{b,2}+O((\epsilon|\log\epsilon|+|\lambda|)^{2})\\
        \end{array}
\right)
\left(\begin{array}{c}
        \beta_{f} \\
        \beta_{b}
      \end{array}
\right)\\
&=\vec{0},
\end{split}
\end{equation}
where the approximations are $a$--uniformly,
\begin{equation}\label{eq.75}
  M_{f}=\int_{-\infty}^{\infty}F(0)e^{-c_{0}F^{2}(0)\xi}(u'_{f}(\xi))^{2}d\xi,
\end{equation}
and  $M_{b,1}$ and $M_{b,2}$ are those defined in \eqref{eq.Mb1 and Mb2}. Hence, any nontrivial solution $(\beta_{f},\beta_{b})$ to system \eqref{eq.74} corresponds to an eigenfunction of the shifted eigenvalue system \eqref{eq.shift}.

{Since all quantities, the perturbed matrices $B_{j}(\xi,\lambda),~j=f,~b,$ the evolution $\mathcal{T}(\xi,\hat{\xi},\lambda)$ of system \eqref{eq.shift} and the projections $\mathcal{Q}^{u,s}_{r,l}(\xi,\lambda)$ associated with the exponential dichotomy of \eqref{eq.shift}, occurring in this section are analytic in $\lambda$, it induces that the matrix in \eqref{eq.74} and its determinant $D(\lambda)=D(\lambda;a,\epsilon)$ are analytic in $\lambda$.} Since $u_{j}(\xi)$ converges to $0$ as $\xi\rightarrow \pm \infty$ at an exponential rate, then the $\epsilon$--independent quantities $M_{f}$ and $M_{b,1}$ {  are at} leading order bounded away from $0$. {It follows that $1/C \leq M_{f},~M_{b,1}\leq C$.} Combining \eqref{eq.63} and \eqref{eq.70} will arrive $a$--uniform estimate $M_{b,2}=O(\epsilon|\log\epsilon|)$.
{Hence we have
$$|D(\lambda)-\lambda M_{f}(\lambda M_{b,1}+M_{b,2})|<|\lambda M_{f}(\lambda M_{b,1}+M_{b,2})|$$
for $\lambda\in \partial R_{1}(\delta):=\{\lambda\in \mathbb{C}:~|\lambda|=\delta\}$ with $\delta,~\epsilon>0$ sufficiently small.
Since the roots of the quadratic equation $\lambda M_{f}(\lambda M_{b,1}+M_{b,2})=0$ in $\lambda$ are $0$ and $-M_{b,2}M_{b,1}^{-1}$, $D(\lambda)$ has precisely two roots $\lambda_{0},~\lambda_{1}$ in $R_{1}(\delta)$, by Rouch\'e Theorem, which are $a$--uniformly $O(|\epsilon\log\epsilon|^{2})$--close to $0$ and $-M_{b,2}M_{b,1}^{-1}$.
Thus system \eqref{eq.shift} has two eigenvalues $\lambda_{0},~\lambda_{1}$ in the region $R_{1}(\delta)$.}

{Let $\lambda_{1}$ be the eigenvalue, which is $a$--uniformly $O(|\epsilon\log\epsilon|^{2})$--close to $-M_{b,2} M_{b,1}^{-1}$, and  $\varphi_{1}(\xi)$ be the associated eigenfunction of system $\eqref{eq.shift}$. The eigenvector $(\beta_{f},\beta_{b})^{T}=(O(e^{-q/\epsilon}),1)^{T}$ is the associated solution to system \eqref{eq.74}.}
Propositions \ref{pro.5}, \ref{pro.6} and \ref{pro.7} provide a piecewise continuous eigenfunction to system \eqref{eq.shift} for any prospective eigenvalue $\lambda\in R_{1}(\delta)$. Thus, the eigenfunction $\varphi_{1}(\xi)$ to $\eqref{eq.shift}$ satisfies $\eqref{eq.varphi f-}$ on $I_{f,-}$, $\eqref{eq.38}$ on $I_{f,+}$, $\eqref{eq.40}$ on $I_{r}$, $\eqref{eq.43}$ on $I_{b,-}$, $\eqref{eq.54}$ on $I_{b,+}$, and $\eqref{eq.58}$ on $I_{l}$.
{Moreover, $\beta_{f}=O(e^{-q/\epsilon})$ and $\beta_{b}=1$ can represent all variables occurring in these six expressions and we obtain the approximation \eqref{eq.varphi1 bound} of $\varphi_{1}(\xi)$.}

By translational invariance, it holds that $e^{-\eta\xi}\phi'_{a,\epsilon}(\xi)$ is an eigenfunction
of the shifted eigenvalue system \eqref{eq.shift} at $\lambda=0$. Therefore, $\lambda=0$ is one of the two eigenvalues  $\lambda_{0}$ and $\lambda_{1}$.
According to \eqref{eq.varphi1 bound}, it holds that the eigenfunction $\varphi_{1}(\xi)$ is not
a multiple of $e^{-\eta\xi}\phi'_{a,\epsilon}(\xi)$.
{By Lemma \ref{lem.1} the asymptotic matrix $\hat{A}(0,0,\lambda,a,\epsilon)$ of the shifted eigenvalue system \eqref{eq.shift} has precisely one eigenvalue with positive real part, it induces that the space of the exponentially decaying
solutions in backward time to \eqref{eq.shift} is one dimensional.}
Thereby, $\varphi_{1}(\xi)$ and $e^{-\eta\xi}\phi'_{a,\epsilon}(\xi)$ must correspond to
different eigenvalues. Consequently, $\lambda_{0}=0$ and $\lambda_{1}\neq\lambda_{0}$.

It completes the proof of the proposition.
\end{proof}

\subsubsection{The translational eigenvalue $\lambda_{0}=0$ is simple}
\hspace{0.13in}

In this section, we prove that the eigenvalue $\lambda_{0}=0$ of $\mathcal{L}_{a,\epsilon}$ is simple. Recall that $\lambda_{0}=0$ has geometric
multiplicity one by the proof of Theorem \ref{the.lambda approximate}.

\begin{proposition}\label{pro.lambda0 simple}
The translational eigenvalue $\lambda_{0}=0$ of $\mathcal{L}_{a,\epsilon}$ is simple.
\end{proposition}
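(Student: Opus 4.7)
The plan is to upgrade the already-established geometric simplicity of $\lambda_{0}=0$ (see the proof of Theorem \ref{the.lambda approximate}) to algebraic simplicity by showing that the analytic matching determinant $D(\lambda)$ from \eqref{eq.74} has a simple zero at $\lambda_{0}=0$. Indeed, in the Lin's method / exponential dichotomy framework used throughout Section \ref{sx5}, the matching determinant $D(\lambda)$ plays the role of the Evans function: the algebraic multiplicity of any eigenvalue $\lambda^{*}\in R_{1}(\delta)$ of $\mathcal{L}_{a,\epsilon}$ coincides with the order of vanishing of $D$ at $\lambda^{*}$. So the proof reduces to checking $D'(0)\neq 0$.

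First, I would recall from the proof of Theorem \ref{the.lambda approximate} that the leading part of $D(\lambda)$ is
$$
-\lambda M_{f}\bigl(\lambda M_{b,1}+M_{b,2}\bigr),
$$
and that a Rouch\'{e} argument there produced exactly two zeros of $D$ in $R_{1}(\delta)$ counted with multiplicity, identified as $\lambda_{0}=0$ and $\lambda_{1}$. I would then observe that $\lambda_{0}\neq \lambda_{1}$: by the bounds \eqref{eq.varphi1 bound}, the eigenfunction $\varphi_{1}(\xi)$ at $\lambda_{1}$ is concentrated only near $\xi=Z_{a,\epsilon}$, while $e^{-\eta\xi}\phi'_{a,\epsilon}(\xi)$, the eigenfunction at $\lambda_{0}=0$, is nontrivially supported on both the front and the back layers, so the two cannot be scalar multiples of each other. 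Since $D$ has total zero-multiplicity two and two distinct zeros, each zero must be simple; in particular $D'(0)\neq 0$.

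Second, I would convert simplicity of the zero of $D$ into algebraic simplicity of the eigenvalue by contradiction: any bounded generalized eigenfunction $\vec{P}$ solving $\mathcal{L}_{a,\epsilon}\vec{P}=\widetilde{\phi}'_{a,\epsilon}$ would, after the same Lyapunov--Schmidt reduction of the matching conditions \eqref{exmatch1} that produced \eqref{eq.74}, yield a solution of the $\lambda$-derivative of the matching system, forcing $D$ to vanish to order at least two at $\lambda_{0}$. This contradicts $D'(0)\neq 0$. The main obstacle is making this last step rigorous within the present framework, since it requires differentiating the projections $Q_{j,\pm}^{u,s}$ and the perturbation maps $\mathcal{H}_{f},\mathcal{H}_{b},\mathcal{H}_{f,-},\mathcal{H}_{b,+}$ in $\lambda$ and tracking how the Jordan chain threads through the entry/exit conditions of Propositions \ref{pro.5}--\ref{pro.7}. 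A cleaner equivalent route, which I would prefer if the bookkeeping becomes cumbersome, is to verify the Fredholm alternative $\langle \widetilde{\phi}'_{a,\epsilon},\Psi_{*}\rangle\neq 0$ directly, where $\Psi_{*}$ is the adjoint vector from \eqref{eq.Mb1 and Mb2}; this inner product is, up to a nonvanishing normalization involving $M_{f}$, precisely $D'(0)$, which was shown above to be nonzero.
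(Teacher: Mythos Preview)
Your observation that Rouch\'e plus the distinctness $\lambda_{0}\neq\lambda_{1}$ (already established in Theorem~\ref{the.lambda approximate}) forces $D$ to have a \emph{simple} zero at $\lambda_{0}=0$ is correct and is a cleaner route to $D'(0)\neq 0$ than computing $D'(0)$ directly. The gap lies in the step you flag yourself: the assertion that the order of vanishing of the reduced matching determinant $D$ equals the algebraic multiplicity of the eigenvalue of $\mathcal{L}_{a,\epsilon}$. This is indeed the standard Evans-function statement, but $D$ here is the determinant of the $2\times 2$ system \eqref{eq.74} obtained \emph{after} the Lyapunov--Schmidt elimination of $\zeta_{f},\zeta_{b,+}$ in \eqref{eq.65}--\eqref{eq.66}; showing that this reduction preserves multiplicities requires an argument you do not supply. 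Your Route~B is also imprecise: $\Psi_{*}$ in \eqref{eq.Mb1 and Mb2} is a fixed vector in $\mathbb{C}^{3}$ and $M_{b,2}=\langle\Psi_{*},\phi'_{a,\epsilon}(Z_{a,\epsilon}-L_{\epsilon})\rangle$ is a pointwise pairing, not the $L^{2}$ inner product of $\widetilde{\phi}'_{a,\epsilon}$ with the adjoint eigenfunction that the Fredholm alternative requires.

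The paper bypasses the multiplicity correspondence entirely and instead carries out your Route~A explicitly. It supposes a bounded solution $\psi$ of the generalized eigenvalue problem $\mathcal{L}_{a,\epsilon}\psi=\widetilde{\phi}'_{a,\epsilon}$ exists, rewrites this as the inhomogeneous first-order system \eqref{eq.78}, and notes that $\partial_{\lambda}\varphi_{f,-}(\xi,0)$, $\partial_{\lambda}\varphi^{sl}(\xi,0)$, $\partial_{\lambda}\varphi_{b,+}(\xi,0)$ are particular solutions on the three subintervals (since the piecewise solutions from Propositions~\ref{pro.5}--\ref{pro.7} are analytic in $\lambda$). Thus $\psi$ equals these derivatives plus multiples $\alpha_{1},\alpha_{2},\alpha_{3}$ of the homogeneous solution $e^{-\eta\xi}\phi'_{a,\epsilon}$ on each piece. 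Pairing the jump conditions at $\xi=0$ and $\xi=Z_{a,\epsilon}$ against the vectors $\varphi_{f,a,\epsilon}=(v'_{a,\epsilon}(0),-u'_{a,\epsilon}(0),0)^{T}$ and $\varphi_{b,a,\epsilon}$ (chosen orthogonal to $\phi'_{a,\epsilon}$ so the $\alpha_{i}$ drop out) produces the linear system
\[
\begin{pmatrix}-M_{f}+O(\epsilon|\log\epsilon|)&O(e^{-q/\epsilon})\\ O(e^{-q/\epsilon})&-M_{b,1}+O(\epsilon|\log\epsilon|)\end{pmatrix}\begin{pmatrix}\beta_{f}\\\beta_{b}\end{pmatrix}=0,
\]
where $(\beta_{f},\beta_{b})$ are the parameters determining $e^{-\eta\xi}\phi'_{a,\epsilon}$ itself in the matching framework. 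Since $M_{f},M_{b,1}$ are bounded away from zero, this forces $\beta_{f}=\beta_{b}=0$, contradicting nontriviality of the translational eigenfunction. This direct argument is self-contained and avoids invoking the Evans-function multiplicity theorem; your approach would be shorter if that theorem were available in the present reduced setting, but establishing it here is comparable in effort to the paper's computation.
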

\begin{proof}
{According to Theorem \ref{the.lambda approximate}, we obtain that the eigenspace of the shifted eigenvalue system \eqref{eq.shift} at
$\lambda_{0}=0$ is spanned by $e^{-\eta\xi}\phi'_{a,\epsilon}(\xi)$. Thus, translating back to
system \eqref{full matrix}, it holds that the kernel $\ker(\mathcal{L}_{a,\epsilon})$ is of one dimensional and is spanned by $\widetilde{\phi}'_{a,\epsilon}(\xi)=(u_{a,\epsilon}(\xi),w_{a,\epsilon}(\xi))^{T}$. Thereby, the geometric multiplicity of $\lambda_{0}=0$ for { $\mathcal{L}_{a,\epsilon}$} is equal to one.}
{ Next, we will prove that the algebraic multiplicity of $\lambda_{0}=0$ is also equal to one}, i.e., there is no exponentially localized solutions
$\widetilde{\psi}(\xi)$ to the generalized eigenvalue problem $\mathcal{L}_{a,\epsilon}\widetilde{\psi}=\widetilde{\phi}'_{a,\epsilon}(\xi)$.
This problem can be rewritten as
\begin{equation}\label{eq.77}
\begin{array}{ll}
\check{\psi}_{\xi}=A_{0}(\xi,0)\check{\psi}+ \partial_{\lambda}A_{0} (\xi,0)\phi'_{a,\epsilon}(\xi),
\end{array}
\end{equation}
with $A_{0}(\xi,0)$ as that in system \eqref{full matrix}. Recall from Proposition \ref{pro.ess} and Lemma \ref{lem.1} that the asymptotic matrices of $A_{0}(\xi,\lambda)$ and
{ its} shifted version $A(\xi,\lambda)$ have precisely one eigenvalue with positive real part at $\lambda=0$.
Since $e^{-\eta\xi}\phi'_{a,\epsilon}(\xi)$ is exponentially localized, it follows that
$\check{\psi}_{\xi}$ is an exponentially localized solution to system \eqref{eq.77} if and only if $\psi_{\xi}=e^{-\eta\xi}\check{\psi}_{\xi}$ is an exponentially localized solution to the system
\begin{equation}\label{eq.78}
\psi_{\xi}=A(\xi,0)\psi+e^{-\eta\xi} \partial_{\lambda}A (\xi,0)\phi'_{a,\epsilon}(\xi),
\end{equation}
where $A(\xi,0)$ is the coefficient matrix of the shifted eigenvalue problem \eqref{eq.shift} at $\lambda=0$.

Again using the fact that $e^{-\eta\xi}\phi'_{a,\epsilon}(\xi)$ is an exponentially localized solution to \eqref{eq.shift} at $\lambda=0$, together with Propositions \ref{pro.5}, \ref{pro.6} and \ref{pro.7}, one can get { the solutions of system \eqref{eq.shift} on different intervals}, $\varphi_{f,-}(\xi,\lambda)$, $\varphi^{sl}(\xi,\lambda)$ and $\varphi_{b,+}(\xi,\lambda)$, which are analytic in $\lambda$ and satisfy
\[
\begin{split}
e^{-\eta\xi}\phi'_{a,\epsilon}(\xi)&=\varphi_{f,-}(\xi,0),\ \ \ ~\xi\in (-\infty,0],\\
 e^{-\eta\xi}\phi'_{a,\epsilon}(\xi)&=\varphi^{sl}(\xi,0),\ \ \ \ \ ~\xi\in [0,Z_{a,\epsilon}],\\
 e^{-\eta\xi}\phi'_{a,\epsilon}(\xi)&=\varphi_{b,+}(\xi,0), \ \ \  ~\xi\in [Z_{a,\epsilon},+\infty),
\end{split}
\]
for some $\beta_{f,-},~\beta_{f},~\zeta_{f},~\beta_{b},~\beta_{b,+},~\zeta_{b,+}\in \mathbb{C}$, {  which are given in Propositions \ref{pro.5}, \ref{pro.6} and \ref{pro.7}.}
As in the proof of Theorem \ref{the.lambda approximate}, applying the projections $Q_{j,-}^{u}(0),~j=f,~b$ to the differences $\varphi_{f,-}(0,0)-\varphi^{sl}(0,0)$
and $\varphi^{sl}(Z_{a,\epsilon},0)-\varphi_{b,+}(Z_{a,\epsilon},0)$ yield $\beta_{f,-}=\beta_{f}$ and $\beta_{b,+}=\beta_{b}$.
According to \eqref{eq.66}, one knows that $\zeta_f$ and $\zeta_{b,+}$ are also treated as functions of $\beta_b$ and $\beta_f$. Moreover, we derive
\begin{equation*}\label{eq.79}
\begin{split}
\zeta_{f}=\zeta_{f}(\beta_{b}, \ & \beta_{f}),~\zeta_{b,+}=\zeta_{b,+}(\beta_{b},\beta_{f}),\\
|\zeta_{f}(\beta_{b},\beta_{f})|  &\leq C\left(\epsilon|\log\epsilon||\beta_{f}|+e^{-q/\epsilon}|\beta_{b}|\right),\\
|\zeta_{b,+}(\beta_{b},\beta_{f})|  &\leq C\left(\epsilon|\log\epsilon||\beta_{b}|+e^{-q/\epsilon}|\beta_{f}|\right).
\end{split}
\end{equation*}
where $C>0$ is a constant independent of $a$ and $\epsilon$.

{Note that $ \partial_{\lambda}\varphi_{f,-} (\xi,0),~ \partial_{\lambda}\varphi^{sl} (\xi,0)$ and $ \partial_{\lambda}\varphi_{b,+} (\xi,0)$ are
particular solutions to equation \eqref{eq.78} on $(-\infty,0],~[0,Z_{a,\epsilon}]$ and $[Z_{a,\epsilon},+\infty)$ respectively, and that the space of exponentially localized solutions to the homogeneous problem \eqref{eq.shift} associated to \eqref{eq.78} is spanned by $e^{-\eta\xi}\phi'_{a,\epsilon}(\xi)$.
Suppose $\psi(\xi)$ is an exponentially localized solution to \eqref{eq.78}.}
Then, it holds
\begin{equation}\label{eq.80}
\begin{split}
\psi(\xi)&= \partial_{\lambda}\varphi_{f,-} (\xi,0)+\alpha_{1}e^{-\eta\xi}\phi'_{a,\epsilon}(\xi),\ \ \ ~\xi\in (-\infty,0], \\
\psi(\xi)&= \partial_{\lambda}\varphi^{sl} (\xi,0)+\alpha_{2}e^{-\eta\xi}\phi'_{a,\epsilon}(\xi),\ \ \ \ \ ~\xi\in [0,Z_{a,\epsilon}], \\
\psi(\xi)&= \partial_{\lambda}\varphi_{b,+} (\xi,0)+\alpha_{3}e^{-\eta\xi}\phi'_{a,\epsilon}(\xi),\ \ \ ~\xi\in [Z_{a,\epsilon},+\infty),
\end{split}
\end{equation}
for some $\alpha_{1},~\alpha_{2},~\alpha_{3}\in \mathbb{C}$. Differentiating the analytic expressions \eqref{eq.varphi f-} and \eqref{eq.varphi sl f} with respect to $\lambda$ derives
\begin{equation}\label{eq.81}
\begin{split}
 \partial_{\lambda}\varphi_{f,-} (0,0) &=\beta_{f}\int_{-\infty}^{0}T_{f,-}^{s}(0,\hat{\xi})\partial_{\lambda}B_{f}(\hat{\xi},0)\omega_{f}(\hat{\xi})d\hat{\xi}
+\mathcal{H}_{1}(\beta_{f}), \\
\partial_{\lambda} \varphi^{sl}(0,0) &=
\beta_{f}\int_{L_{\epsilon}}^{0}T_{f,+}^{u}(0,\hat{\xi})\partial_{\lambda}B_{f}(\hat{\xi},0)\omega_{f}(\hat{\xi})d\hat{\xi}+\mathcal{H}_{2}(\beta_{f},\beta_{b}),\\
\| \mathcal{H}_{1}(\beta_{f})  \|  &\leq C\epsilon|\log\epsilon||\beta_{f}|,\\
\| \mathcal{H}_{2}(\beta_{f},\beta_{b})  \|  &\leq C(\epsilon|\log\epsilon||\beta_{f}|+e^{-q/\epsilon}|\beta_{b}|),
\end{split}
\end{equation}
where
$$\partial_{\lambda}B_{f}(\hat{\xi},0)=\left(
    \begin{array}{ccc}
     0 & 0 & 0 \\
      F(w_{a,\epsilon}(\xi)) & 0 & 0 \\
      0 & 0 & -\frac{1}{c} \\
    \end{array}
  \right) =:  \widetilde{B}.
$$
By Theorem \ref{the.expression}$(i)$, for $\xi\in J_{f}=(-\infty,L_{\epsilon}]$, we obtain
\begin{equation}\label{eq.82}
\begin{array}{ll}
\| \varphi_{f,a,\epsilon}-\varphi_{1,f}  \|\leq C\epsilon|\log \epsilon|,~~\mathrm{where}~ \varphi_{f,a,\epsilon}=\left(\begin{array}{c}
                                                                                                                    v_{a,\epsilon}'(0) \\
                                                                                                                    -u_{a,\epsilon}'(0) \\
                                                                                                                    0
                                                                                                                  \end{array}\right).
\end{array}
\end{equation}
Some calculations show that $\varphi_{f,a,\epsilon} \perp \phi'_{a,\epsilon}(0)$. Moreover, by expressions \eqref{eq.Q projections}, it holds
$$
\varphi_{1,f}\in \left(R(Q_{f,-}^{s}(0))\right)\cap \left( R(Q_{f,+}^{u}(0))\right).
$$
Combining these results with \eqref{eq.80}, \eqref{eq.81}, \eqref{eq.82} and \eqref{eq.nu bound} yield
\begin{equation}\label{eq.83}
\begin{split}
0&=\left\langle \varphi_{f,a,\epsilon}, \ \partial_{\lambda}\varphi_{f,-} (0,0)-  \partial_{\lambda} \varphi^{sl} (0,0)+(\alpha_{1}-\alpha_{2})\phi'_{a,\epsilon}(0)   \right \rangle \\
&=\left\langle \varphi_{f,a,\epsilon}, \ \partial_{\lambda}\varphi_{f,-} (0,0)- \partial_{\lambda} \varphi^{sl} (0,0)   \right\rangle\\
&=\beta_{f}\left(\int_{-\infty}^{L_{\epsilon}} \left\langle T_{f}(0,\xi)^{*}\varphi_{1,f},\widetilde{B}\omega_{f}(\xi)  \right\rangle d\xi+O(\epsilon|\log\epsilon|)\right)+\beta_{b}O(e^{-q/\epsilon})\\
&=\beta_{f}(-M_{f}+O(\epsilon|\log\epsilon|))+\beta_{b}O(e^{-q/\epsilon}),
\end{split}
\end{equation}
with the asymptotic expression $a$--uniformly, where $M_{f}$ is defined in \eqref{eq.75}. Let $\varphi_{b,a,\epsilon}=(v'_{a,\epsilon}(Z_{a,\epsilon}),-u'_{a,\epsilon}(Z_{a,\epsilon}),0)^{T}$. Similar calculation as above shows
\begin{equation}\label{eq.84}
\begin{split}
0&=\left\langle \varphi_{b,a,\epsilon}, \  \partial_{\lambda}\varphi^{sl} (Z_{a,\epsilon},0)- \partial_{\lambda} \varphi_{b,+} (Z_{a,\epsilon},0)+(\alpha_{2}-\alpha_{3})e^{-\eta Z_{a,\epsilon}}\phi'_{a,\epsilon}(Z_{a,\epsilon}) \right\rangle \\
&=\beta_{f}(-M_{b,1}+O(\epsilon|\log\epsilon|))+\beta_{f}O(e^{-q/\epsilon}),
\end{split}
\end{equation}
with the asymptotic expression $a$--uniformly, where $M_{b,1}$ is defined in \eqref{eq.Mb1 and Mb2}. The conditions \eqref{eq.83} and \eqref{eq.84} form a system
\begin{equation}\label{eq.83 and 84}
\begin{array}{ll}
\left(
  \begin{array}{cc}
    -M_{f}+O(\epsilon|\log\epsilon|) & O(e^{-q/\epsilon}) \\
    O(e^{-q/\epsilon}) & -M_{b,1}+O(\epsilon|\log\epsilon|) \\
  \end{array}
\right)\left(\begin{array}{c}
               \beta_{f} \\
               \beta_{b}
             \end{array}
\right)=\vec{0}.
\end{array}
\end{equation}
Since $M_{f},~M_{b,1}>0$ are independent of $\epsilon$ and bounded below away from $0$ uniformly in $a$, system \eqref{eq.83 and 84} has only the trivial soluton $\beta_{f}=\beta_{b}=0$. We are in contradiction with the fact that $e^{-\eta\xi}\phi'_{a,\epsilon}(\xi)$ is not the zero solution to the shifted eigenvalue system \eqref{eq.shift}.
So far, we arrive the conclusions that system \eqref{eq.78} has no exponentially localized solution and that the algebraic multiplicity of the eigenvalue $\lambda=0$ of $\mathcal{L}_{a,\epsilon}$ is also equal to one.
\end{proof}

\subsubsection{Approximate calculation of $\lambda_{1}$}
\hspace{0.13in}

{By Theorem \ref{the.lambda approximate}, the second eigenvalue $\lambda_{1}\in {R_{1}(\delta)}$ of the shifted eigenvalue system \eqref{eq.shift} is $a$--uniformly $O(|\epsilon\log\epsilon|^2)$--close to $-M_{b,2}M_{b,1}^{-1}$. Thus, we need to show $-M_{b,2}M_{b,1}^{-1}\leq -b_{0}\epsilon$ for proving our main stability results with a constant independent of $a$ and $\epsilon$.}

\begin{proposition}\label{pro.lambda1 approximate}
For $k_1>0$ small given in Lemma \ref{lem.1}, there exists an $\epsilon_{0}>0$ such that for each $(a,\epsilon)\in [0,\frac{1}{2}-k_{1}]\times (0,\epsilon_{0})$, $M_{b,2}$ in Theorem \ref{the.lambda approximate} has the $a$--uniformly approximated expression
\begin{equation*}\label{eq.lambda1 approximate}
\begin{split}
M_{b,2}=&-\frac{\epsilon}{c_{0}}(u_{b}^{1}-\gamma w_{b}) \left( \int_{-\infty}^{\infty}u'_{b}(z)e^{-c_{0}F^{2}(w_{b})z}F(w_{b})u_{b}(z)dz\right. \\
&\ \ \ \ \ \ \ \qquad \left. +c_{0}F_{w}(w_{b}) \int_{-\infty}^{\infty}(u'_{b}(z))^2e^{-c_{0}F^{2}(w_{b})z}dz \right)+O\left(\epsilon^{2}|\log\epsilon|\right).
\end{split}
\end{equation*}
Especially, we have $M_{b,2}\geq {\epsilon}/{k_{2}}$ for some $k_{2}>1$, independent of $a$ and $\epsilon$.
\end{proposition}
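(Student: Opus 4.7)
The plan is to expand $M_{b,2}=\langle\Psi_*,\phi'_{a,\epsilon}(Z_{a,\epsilon}-L_\epsilon)\rangle$ componentwise and isolate the single $O(\epsilon)$ contribution. First I would approximate $\phi'_{a,\epsilon}(Z_{a,\epsilon}-L_\epsilon)=(u'_{a,\epsilon},v'_{a,\epsilon},w'_{a,\epsilon})^T(Z_{a,\epsilon}-L_\epsilon)$ using Theorem \ref{the.expression}$(ii)$ together with the Fenichel reduction of \eqref{eqfast} on $S_0^r$. Near the jump--off point, $u_{a,\epsilon}=u_b^1+O(\epsilon|\log\epsilon|)$, $w_{a,\epsilon}=w_b+O(\epsilon|\log\epsilon|)$ and $v_{a,\epsilon}=\tfrac{\epsilon}{c_0 F(w)}U_2'(w)(u-\gamma w)+O(\epsilon^2)$, so that
$$w'_{a,\epsilon}(Z_{a,\epsilon}-L_\epsilon)=\tfrac{\epsilon}{c_0}(u_b^1-\gamma w_b)+O(\epsilon^2|\log\epsilon|),\quad u'_{a,\epsilon}=F(w_{a,\epsilon})v_{a,\epsilon}=O(\epsilon),\quad v'_{a,\epsilon}=O(\epsilon).$$

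Second, I would show the first two components of $\Psi_*$ give only a negligible contribution. Since $(u_b,v_b)\to(u_b^1,0)$ at the sharp exponential rate $F(w_b)\sqrt{k/2}\,u_b^1$ as $\xi\to-\infty$, the quantities $|u_b'(-L_\epsilon)|,|v_b'(-L_\epsilon)|$ are bounded by $C e^{-F(w_b)\sqrt{k/2}\,u_b^1 L_\epsilon}$. Using the matching relation \eqref{eq.wb and w0 relation} I rewrite $c_0F^2(w_b)=\sqrt{k/2}F(w_b)(2U_1(w_b)-U_2(w_b))$, so the product $e^{c_0F^2(w_b)L_\epsilon}\{|u_b'(-L_\epsilon)|+|v_b'(-L_\epsilon)|\}$ is bounded by $\epsilon^{\nu\sqrt{2k}F(w_b)(U_2(w_b)-U_1(w_b))}$. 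The choice of $\nu$ in \eqref{eq.nu bound} makes this power large enough, so $\Psi_{*,1}u'_{a,\epsilon}+\Psi_{*,2}v'_{a,\epsilon}$ is absorbed in the $O(\epsilon^2|\log\epsilon|)$ remainder. The same exponential bounds extend the truncated integral in $\Psi_{*,3}$ to $-\int_{-\infty}^{\infty}u_b'(z)e^{-c_0F^2(w_b)z}\Delta_b(z)\,dz$ up to the same order of error.

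Third, I would simplify $\Delta_b$. Since $f_w(u,w)=u$ from the explicit form of $f$, and $u_b''=c_0F^2(w_b)u_b'+F^2(w_b)f(u_b,w_b)$ from the layer equations,
$$\Delta_b=F(w_b)u_b+2c_0F_w(w_b)u_b'+2F_w(w_b)f(u_b,w_b).$$
The crucial identity is $e^{-c_0F^2(w_b)z}F(w_b)f(u_b,w_b)=\tfrac{d}{dz}\!\bigl[e^{-c_0F^2(w_b)z}v_b\bigr]$, which follows from $v_b'=c_0F^2(w_b)v_b+F(w_b)f(u_b,w_b)$. Together with $u_b'=F(w_b)v_b$ and a double integration by parts --- whose boundary terms vanish because the exponent $2F(w_b)\sqrt{k/2}\,u_b^1-c_0F^2(w_b)=\sqrt{k/2}F(w_b)(3U_2(w_b)-2U_1(w_b))$ is positive by \eqref{eq.wb and w0 relation} --- this gives $\int u_b'(z)f(u_b,w_b)e^{-c_0F^2(w_b)z}dz=-\tfrac{c_0}{2}\int(u_b'(z))^2 e^{-c_0F^2(w_b)z}dz$. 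The two $F_w(w_b)$ contributions then collapse to a single $c_0F_w(w_b)\int(u_b')^2e^{-c_0F^2(w_b)z}dz$ term, which combined with the $F(w_b)u_b$ piece and Step 1 yields the claimed $M_{b,2}$.

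For the positivity lower bound, $u_b^1-\gamma w_b>0$ uniformly in $a\in[0,\tfrac12-k_1]$ since $u_b^1=U_2(w_b)$ and $\gamma$ is fixed small; on the back $u_b>0$ and $u_b'<0$, so $\int u_b'u_b\, e^{-c_0F^2z}dz<0$, and from \eqref{eqFHN2013-1} one checks $F_w(w_b)<0$, giving also $c_0 F_w(w_b)\int(u_b')^2e^{-c_0F^2z}dz<0$. Continuity of these integrals in $a$ on the compact set $[0,\tfrac12-k_1]$ provides a uniform positive lower bound of order $\epsilon$. The main technical obstacle will be the uniform-in-$a$ management of the delicate balance between the growing factor $e^{c_0F^2(w_b)L_\epsilon}$ and the decaying $u_b'(-L_\epsilon)$, together with the boundary-term analysis in the integration by parts; both hinge on the special relation \eqref{eq.wb and w0 relation} connecting $c_0$, $F(w_b)$, $U_1(w_b)$ and $U_2(w_b)$, without which the sign of the critical exponent would not be guaranteed.
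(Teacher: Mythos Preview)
Your proposal is correct and follows essentially the same route as the paper's proof: both compute $w'_{a,\epsilon}(Z_{a,\epsilon}-L_\epsilon)=\tfrac{\epsilon}{c_0}(u_b^1-\gamma w_b)+O(\epsilon^2|\log\epsilon|)$ directly from the slow equation, pair it with the third entry of $\Psi_*$, and then reduce the $\Delta_b$ integral to the two displayed terms before invoking $u_b'<0$, $u_b>0$, $F_w(w_b)<0$ for the sign. The paper's write-up is terser---it passes silently from the $\tfrac{2F_w}{F^2}u_b''$ form to the $c_0F_w\int(u_b')^2e^{-c_0F^2z}dz$ form and does not spell out why the first two components of $\langle\Psi_*,\phi'_{a,\epsilon}\rangle$ are negligible---whereas you supply the integration-by-parts identity $\int u_b'\,f(u_b,w_b)\,e^{-c_0F^2z}dz=-\tfrac{c_0}{2}\int(u_b')^2e^{-c_0F^2z}dz$ and the exponent bookkeeping via \eqref{eq.wb and w0 relation} that makes the boundary terms vanish; these details are exactly what is implicit in the paper's jump between its second and third displayed equalities.
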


\begin{proof}
The back solution $\phi_{b}(\xi)$ to system (3.5) converges to the $(u_{b}^{1},0)^{T}$ as $\xi\rightarrow -\infty$ with  the exponential rate
$\sqrt{\frac{k}{2}}F(w_{b})U_{2}(w_{b})$, where $$u_{b}^{1}\triangleq U_{2}(w_{b})=\frac{1+a}{2}+\frac{1}{2}\sqrt{(1-a)^{2}-\frac{4w_{b}}{k}}.$$
Combining this with Theorem \ref{the.expression}$(i)$, the condition \eqref{eq.nu bound} and $c=c_{0}+O(\epsilon)$, we obtain the estimation
\begin{equation*}
\begin{split}
w'_{a,\epsilon}(Z_{a,\epsilon-L_{\epsilon}})&=\frac{\epsilon}{c}(u_{a,\epsilon}(Z_{a,\epsilon-L_{\epsilon}})-\gamma w_{a,\epsilon}(Z_{a,\epsilon-L_{\epsilon}})) \\
&=\frac{\epsilon}{c_{0}}(u_{b}(-L_{\epsilon})-\gamma w_{b})+O(\epsilon^{2}|\log\epsilon|)  \\
&=\frac{\epsilon}{c_{0}}(u_{b}^{1}-\gamma w_{b})+O(\epsilon^{2}|\log\epsilon|).
\end{split}
\end{equation*}
This follows that
\begin{equation*}
\begin{split}
M_{b,2}&=\left\langle \left(\begin{array}{c}
 \displaystyle                  e^{c_{0}F^{2}(w_{b})L_{\epsilon}}v_{b}'(-L_{\epsilon}) \\
 \displaystyle                    -e^{c_{0}F^{2}(w_{b})L_{\epsilon}}u_{b}'(-L_{\epsilon}) \\
 \displaystyle                    \int_{\infty}^{-L_{\epsilon}} u'_{b}(z)e^{-c_{0}F^{2}(w_{b})z}\Delta_{b}(z)dz
                 \end{array}\right),
                 \left(\begin{array}{c}
  \displaystyle                   u_{a,\epsilon}'(Z_{a,\epsilon}-L_{\epsilon}) \\
  \displaystyle                   v_{a,\epsilon}'(Z_{a,\epsilon}-L_{\epsilon}) \\
  \displaystyle                   w_{a,\epsilon}'(Z_{a,\epsilon}-L_{\epsilon})
                 \end{array}\right)
  \right\rangle \\
&=\frac{\epsilon}{c_{0}}\left(u_{b}^{1}-\gamma w_{b}\right)+\int_{\infty}^{-\infty}u'_{b}(z) e^{-c_{0}F^{2}(w_{b})z}\left(F(w_{b})u_{b}(z)+\frac{2F_{w}(w_{b})}{F^{2}(w_{b})}u_{b}''(z)\right)dz  \\
&\qquad \ \ \ \ \ \ \ \ ~~+O(\epsilon^{2}|\log\epsilon|)\\
&=-\frac{\epsilon}{c_{0}}(u_{b}^{1}-\gamma w_{b})\left(\int_{-\infty}^{+\infty}u'_{b}(z) e^{-c_{0}F^{2}(w_{b})z}F(w_{b})u_{b}(z)dz \right. \\
&\qquad \ \ \ \ \ \ \ \  \left. ~+c_{0}F_{w}(w_{b})\int_{-\infty}^{+\infty}(u'_{b}(z))^{2}e^{-c_{0}F^{2}(w_{b})z}dz\right)+O(\epsilon^{2}|\log\epsilon|).
\end{split}
\end{equation*}
Recall that $u_{b}^{1}-\gamma w_{b}>0$, $u_{b}(z)>0$, $u'_{b}(z)=v_{b}(z)<0$ and
$$
F_{w}(w_{b})=-\frac{1}{2c_{1}}\frac{1}{\sqrt{(1+\frac{M}{2c_{1}})^{2}-\frac{2}{c_{1}}w_{b}}}<0.
$$
It holds clearly that $M_{b,2}\geq {\epsilon}/{k_{2}}$ for some $k_{2}>1$, independent of $a$ and $\epsilon$.
\end{proof}

\subsection{ The Region $R_{2}(\delta,\widetilde{M})$}
\hspace{0.13in}

The purpose of this section is to prove that the region $R_{2}(\delta,\widetilde{M})$ does not contain any eigenvalue of system \eqref{eq.shift} for any $\widetilde{M}>0$  and each $\delta>0$ sufficiently small.

As mentioned in the previous sections, our method is to prove that system \eqref{eq.shift} allows exponential dichotomies on each
of the intervals $I_{f},~I_{r},~I_{b}$ and $I_{l}$, which together form a partition of the entire real line $\mathbb{R}$.
{Recall that system \eqref{eq.shift} allows the exponential dichotomies on $I_{r}$ and $I_{l}$  by Proposition \ref{pro.2}.}
Using the roughness results, the exponential dichotomies of the reduced eigenvalue problem generate the exponential dichotomies of system \eqref{eq.shift} on $I_{f}$ and $I_{b}$. Our plan is to compare the projections of the above exponential dichotomies at the endpoints of the intervals $I_{f},~I_{r},~I_{b}$ and $I_{l}$. The resulting estimates conclude that for $\lambda\in R_{2}$, any exponential localized solution of system \eqref{eq.shift} must be trivial.


\subsubsection{ A reduced eigenvalue problem}\label{subsec.R-2}
\hspace{0.13in}

Similar to Section \ref{section.Reduced Eigenvalue Problem 1}, we obtain a reduced eigenvalue problem by setting $\epsilon$ to $0$ in
system \eqref{eq.shift} for $\xi$ in $I_{f}$ or $I_{b}$ and $\lambda \in R_{2}.$  Thus, the reduced eigenvalue problem is of the form
\begin{equation}\label{eq.refuced eigenvalue in R2}
\varphi'=A_{j}(\xi,\lambda)\varphi, ~j=f,~b,
\end{equation}
where
\begin{eqnarray*}\label{eq.j=f,b matrx in R2}
\aligned
A_{j}(\xi,\lambda)= A_{j}(\xi,\lambda;a)
 =\left(
                                            \begin{array}{ccc}
                                              -\eta & F(w_{j}) & 0 \\
                                              F(w_{j})\left(f_{u}(u_{j},w_{j})+\lambda\right) & c_{0}F^{2}(w_{j})-\eta & \Delta_{2,j} \\
                                              0 & 0 & -\frac{\lambda}{c_{0}}-\eta \\
                                            \end{array}
                                          \right),
\endaligned
\end{eqnarray*}
and
$$\Delta_{2,j}=F(w_{j})f_{w}(u_{j},w_{j})+2\frac{F_{w}(w_{j})}{F^{2}(w_{j})}u_{j}''-\frac{\lambda}{c_{0}F_{w}(w_{j})}u_{j}',~j=f,~b.$$
Here $u_{j}(\xi)$ denotes the $u$--component of $\phi_{j}$, $a\in[0,\frac{1}{2}-k_{1}]$ and $\lambda \in R_{2}(\delta,\widetilde{M})$.
{By the particular structure of the coefficient matrix $A_{j}(\xi,\lambda)$, the linear differential system \eqref{eq.refuced eigenvalue in R2} admits an invariant subspace $\mathbb{C}^2\times \{0\}\subset \mathbb{C}^3$ on which the dynamics are given by}
\begin{eqnarray}\label{eq.fast subsystem's eigenvalue in R2}
\aligned
\psi'=C_{j}(\xi,\lambda)\psi,~\quad j=f,~b,
\endaligned
\end{eqnarray}
with
\[
C_{j}(\xi,\lambda)=\left(
                                              \begin{array}{ccc}
                                                -\eta & F(w_{j}) \\
                                                F(w_{j})\left(f_{u}(u_{j},w_{j})+\lambda\right) & c_{0}F^{2}(w_{j})-\eta \\
                                              \end{array}
                                              \right).
\]

Next, we will show that systems \eqref{eq.refuced eigenvalue in R2} and \eqref{eq.fast subsystem's eigenvalue in R2} admit exponential dichotomies on both the half--lines.
The equation for $u_{f}$ is
\begin{equation*}\label{eq.uf}
\begin{array}{ll}
u_{f}'=F(0)v_{f},\\
v_{f}'=c_{0}F^{2}(0)v_{f}+f(u_{f},0)F(0).
\end{array}
\end{equation*}
The reduced linear eigenvalue problem along the front $u_{f}$ is given by
\begin{equation*}
\begin{array}{ll}
\left(\begin{array}{c}
        p \\
        q
      \end{array}
\right)'=\left(
              \begin{array}{ccc}
                  0 & F(0) \\
                  F(0)\left(f_{u}(u_{f},0)+\lambda \right) & c_{0}F^{2}(0)  \\
              \end{array}
           \right)
\left(\begin{array}{c}
        p \\
        q
      \end{array}
\right),
\end{array}
\end{equation*}
and the associated shifted eigenvalue problem is
\begin{equation*}\label{eq.uf shift eigenvalue}
\begin{array}{ll}
\left(\begin{array}{c}
        p \\
        q
      \end{array}
\right)'=\left(
              \begin{array}{ccc}
                  -\eta & F(0) \\
                  F(0)(f_{u}(u_{f},0)+\lambda) & c_{0}F^{2}(0)-\eta  \\
              \end{array}
           \right)
\left(\begin{array}{c}
        p \\
        q
      \end{array}
\right).
\end{array}
\end{equation*}
Analogously, the reduced equation along the back is given by
\begin{equation*}\label{eq.ub}
\begin{split}
u_{b}'&=F(0)v_{b},\\
v_{b}'&=c_{0}F^{2}(w_{b})v_{b}+f(u_{b},w_{b})F(w_{b}).
\end{split}
\end{equation*}
and the associated shifted eigenvalue problem is
\begin{equation*}\label{eq.ub shift eigenvalue}
\begin{array}{ll}
\left(\begin{array}{c}
        p \\
        q
      \end{array}
\right)'=\left(
              \begin{array}{ccc}
                  -\eta & F(w_{b}) \\
                  F(w_{b})\left(f_{u}(u_{b},w_{b})+\lambda \right) & c_{0}F^{2}(w_{b})-\eta  \\
              \end{array}
           \right)
\left(\begin{array}{c}
        p \\
        q
      \end{array}
\right).
\end{array}
\end{equation*}
So $e^{-\eta\xi}\phi_{j}'(\xi)$ is the exponential localized solution of system \eqref{eq.fast subsystem's eigenvalue in R2} at $\lambda=0$ and it has no zeros. According to Sturm--Liouville Theorem \ref{th.SL}, the eigenvalues of system \eqref{eq.fast subsystem's eigenvalue in R2} are finite and simple.
Moreover, $\lambda=0$ is the maximum eigenvalue. Thus, system \eqref{eq.fast subsystem's eigenvalue in R2} admits no exponentially localized solutions for $\lambda \in R_{2}(\delta,\widetilde{M})$ with $\delta>0$ sufficiently small. Then system \eqref{eq.fast subsystem's eigenvalue in R2}  admits exponential dichotomy on $\mathbb{R}$. This is the content of the following proposition.

\begin{proposition}\label{pro.exponential dichotomies on R}
Let $k_{1},~\widetilde{M}>0$. For each $\delta>0$ sufficiently small, $a\in[0,\frac{1}{2}-k_{1}]$
and $\lambda\in R_{2}(\delta,\widetilde{M})$ system \eqref{eq.refuced eigenvalue in R2} admits exponential dichotomies on $\mathbb{R}$ with $\lambda$-- and $a$--independent constants $C,~ {\mu}/{2}>0$.
\end{proposition}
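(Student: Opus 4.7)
The plan is to exploit the block upper--triangular structure of the coefficient matrix $A_{j}(\xi,\lambda)$ of \eqref{eq.refuced eigenvalue in R2}: its $2\times 2$ upper block $C_{j}(\xi,\lambda)$ governs the subsystem \eqref{eq.fast subsystem's eigenvalue in R2}, while the $(3,3)$-entry $-\lambda/c_{0}-\eta$ governs the decoupled $w$-equation. I would first produce an exponential dichotomy on $\mathbb{R}$ for each block separately, then assemble the full $3\times 3$ dichotomy via the variation of constants formula, in direct analogy with Proposition \ref{pro.4}.

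The scalar block is essentially free: for $\lambda\in R_{2}(\delta,\widetilde{M})$ and $\delta$ small enough relative to $\eta c_{0}$, one has $\mathrm{Re}(-\lambda/c_{0}-\eta)\le \delta/c_{0}-\eta\le -\eta/2$, so $w'=(-\lambda/c_{0}-\eta)w$ admits an exponential dichotomy on $\mathbb{R}$ with stable subspace $\mathbb{C}$, trivial unstable subspace, and rate at least $\eta/2$, uniformly in $(\lambda,a)\in R_{2}(\delta,\widetilde{M})\times[0,1/2-k_{1}]$. For the $2\times 2$ subsystem, the asymptotic matrices $C_{j,\pm\infty}(\lambda)$ inherit the uniform spectral gap $\mu>0$ established in Lemma \ref{lem.1}, hence are hyperbolic throughout $R_{2}(\delta,\widetilde{M})$. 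By the Sturm--Liouville discussion preceding the statement (Theorem \ref{th.SL}), the point spectrum of \eqref{eq.fast subsystem's eigenvalue in R2} is finite, real and has maximum $0$; after shrinking $\delta$ if necessary, $R_{2}(\delta,\widetilde{M})$ is disjoint from this point spectrum, so no nontrivial exponentially localized solutions exist. Combining asymptotic hyperbolicity with this absence via Palmer's theorem (Theorem \ref{th.dichotomy}) yields an exponential dichotomy of \eqref{eq.fast subsystem's eigenvalue in R2} on $\mathbb{R}$ with rate $\mu/2$.

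With the two block dichotomies in hand, the evolution of \eqref{eq.refuced eigenvalue in R2} inherits the block upper--triangular form
\[
T_{j}(\xi,\hat{\xi};\lambda)=\begin{pmatrix}\Phi_{j}(\xi,\hat{\xi};\lambda) & \displaystyle\int_{\hat{\xi}}^{\xi}\Phi_{j}(\xi,z;\lambda)F_{2,j}(z)\,e^{(-\lambda/c_{0}-\eta)(z-\hat{\xi})}\,dz\\ 0 & e^{(-\lambda/c_{0}-\eta)(\xi-\hat{\xi})}\end{pmatrix},
\]
where $\Phi_{j}$ is the evolution of \eqref{eq.fast subsystem's eigenvalue in R2} and $F_{2,j}(z)=(0,\Delta_{2,j}(z))^{T}$. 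Plugging the two block projections into the same upper--triangular template as in \eqref{eq.Q projections} produces the desired dichotomy projections for \eqref{eq.refuced eigenvalue in R2} on $\mathbb{R}$; the convolution integrals converge absolutely because both block rates are strictly positive, so one obtains uniform constants $C,\,\mu/2>0$ by a routine compactness argument over the compact set $R_{2}(\delta,\widetilde{M})\times[0,1/2-k_{1}]$, together with analyticity of the projections in $\lambda$ and continuity in $a$.

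The main obstacle I anticipate is the Sturm--Liouville step itself: as commonly stated, Theorem \ref{th.SL} yields a real, discrete spectrum for a self-adjoint problem, whereas \eqref{eq.fast subsystem's eigenvalue in R2} in its $2\times 2$ first-order form is not manifestly symmetric. The resolution will be to eliminate $q$ to obtain a second-order scalar equation for $p$, conjugate by an appropriate exponential weight to put it in Liouville normal form, and then apply standard self-adjoint spectral theory to exclude complex eigenvalues in a neighbourhood of the real line. Uniformity of the resulting spectral gap in $a$ will follow from continuous dependence of the front and back orbits $\phi_{j}(\xi;a)$ and of the weight $\eta$ on the compact parameter interval $[0,1/2-k_{1}]$.
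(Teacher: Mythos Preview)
Your approach is essentially the same as the paper's: establish an exponential dichotomy on $\mathbb{R}$ for the $2\times 2$ subsystem \eqref{eq.fast subsystem's eigenvalue in R2} using asymptotic hyperbolicity plus the Sturm--Liouville fact that $\lambda=0$ is the largest eigenvalue, then lift to the full $3\times 3$ system \eqref{eq.refuced eigenvalue in R2} by variation of constants exactly as in Proposition~\ref{pro.4}. The scalar block and the final rate bookkeeping are also in agreement: the paper records the combined rate as $\min\{\mu,\eta-\delta/c_{0}\}\ge \mu/2$.

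There is, however, one concrete mis-citation you should fix. The implication ``asymptotic hyperbolicity $+$ no nontrivial bounded solutions $\Rightarrow$ exponential dichotomy on $\mathbb{R}$'' is indeed Palmer's result \cite{P1984}, but Theorem~\ref{th.dichotomy} in the Appendix is \emph{not} that statement: it requires the coefficient matrix to be pointwise hyperbolic with slowly varying derivative, neither of which holds for $C_{j}(\xi,\lambda)$ along the front or back. The paper instead proceeds in two steps: (i) obtain exponential dichotomies on $\mathbb{R}_{\pm}$ separately from the hyperbolic asymptotic matrices via Theorems~\ref{th.dichotomy} and~\ref{th.dichotomy intergral} (the constant asymptotic system is trivially slowly varying, and $C_{j}(\xi,\lambda)-C_{j,\pm\infty}(\lambda)$ is an exponentially decaying, hence integrable, perturbation), and then (ii) paste the two half-line dichotomies at $\xi=0$ using Coppel \cite[pp.~16--19]{1978book}, which is permissible precisely because the Sturm--Liouville argument rules out bounded solutions for $\lambda\in R_{2}(\delta,\widetilde{M})$. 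Either route works; just make sure you cite the result you are actually using.
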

\begin{proof}
{According to Lemma \ref{lem.1}, it follows that  the asymptotic matrices $$C_{j,\pm\infty}(\lambda)=C_{j,\pm\infty}(\lambda;a):=\lim_{\xi\rightarrow\pm\infty}C_{j}(\xi,\lambda)$$ of system \eqref{eq.fast subsystem's eigenvalue in R2} admit a uniform spectral gap larger than $\mu>0$ for $a\in[0,\frac{1}{2}-k_{1}]$ and $\lambda\in R_{2}(\delta,\widetilde{M})$. So system \eqref{eq.fast subsystem's eigenvalue in R2} admits  exponential dichotomies on both the half--lines with constants $C,~\mu>0$ and projections $\Pi^{u,s}_{j,\pm}(\xi,\lambda)=\Pi^{u,s}_{j,\pm}(\xi,\lambda;a),~j=f,~b$ by Theorems \ref{th.dichotomy} and \ref{th.dichotomy intergral}.
Since $R_{2}(\delta,\widetilde{M})\times [0,\frac{1}{2}-k_{1}]$ is compact, the constant $C>0$ can be chosen independent of $\lambda$ and $a$.}

{Since $\lambda=0$ is the maximum eigenvalue of the coefficient matrix associated to system \eqref{eq.fast subsystem's eigenvalue in R2}, this linear differential system admits no bounded solutions for $\lambda\in R_{2}(\delta,\widetilde{M})$.
According to \cite[p.16--19]{1978book}, we can paste the exponential dichotomies  by defining $\Pi_{j}^{s}(0,\lambda)$ to be the projection onto ${\rm R}(\Pi_{j,+}^{s}(0,\lambda))$ along ${\rm R}(\Pi_{j,-}^{u}(0,\lambda))$.
Thus, there exists an exponential dichotomy for $(\lambda,a)\in R_{2}(\delta,\widetilde{M})\times [0,\frac{1}{2}-k_{1}]$ to system \eqref{eq.fast subsystem's eigenvalue in R2} on $\mathbb{R}$ with $\lambda$-- and $a$--independent constants $C,~\mu>0$ and projections $\Pi_{j}^{u,s}(\xi,\lambda)=\Pi_{j}^{u,s}(\xi,\lambda;a),~j=f,~b$.}

{Similar to the proof of Proposition \ref{pro.4},  by the variation of constants formula the exponential dichotomy of the subsystem \eqref{eq.fast subsystem's eigenvalue in R2} on $\mathbb{R}$ can be transferred to the full system \eqref{eq.refuced eigenvalue in R2}.
By taking $\delta>0$ sufficiently small, the exponential dichotomy on $\mathbb{R}$ of system \eqref{eq.refuced eigenvalue in R2} has the constant $C$ independent of $a$ and $\lambda$ and the constant $\min\{\mu,\eta-\frac{\delta}{c_{0}}\}\geq\frac{\mu}{2}$.}
\end{proof}

Next, we will show that the shifted eigenvalue system \eqref{eq.shift} admits no nontrivial exponentially localized solution in the region $R_{2}(\delta,\widetilde{M})$, namely, the operator $\mathcal{L}_{a,\epsilon}$ admits no spectrum in the region $R_{2}(\delta,\widetilde{M})$.

\subsection{Absence of point spectrum in $R_{2}(\delta,\widetilde{M})$}
\hspace{0.13in}

For proving our results we need the next result, which provides an estimation on projections of the evolution operator of a linear differential system in case of exponential dichotomy.

\begin{lemma}$($\cite[{\bf Lemma}~6.19]{CRS}$)$ \label{lem.CRS 2016}
Let $n\in \mathbb{N},~a,~b\in\mathbb{R}$ with $a<b$ and $A\in C([a,b],~\mathbf{Mat}_{n\times n}(\mathbb{C}))$. Suppose that the linear differential system
\begin{equation}\label{eq.CRS 2016 lemma}
\varphi_{x}=A(x)\varphi,
\end{equation}
has an exponential dichotomy on $[a,b]$ with constants $C,m>0$ and projections
$P_{1}^{u,s}(x)$. Denote by $T(x,y)$ the evolution of system \eqref{eq.CRS 2016 lemma}. Let $P_{2}$ be a projection such that
$\|P_{1}^{s}(b)-P_{2}\|\leq \delta_{0}$ for some $\delta_{0}>0$, and let $\nu \in \mathbb{C}^{n}$ be a vector such that $\|P_{1}^{s}(a)\nu\|\leq k\|P_{1}^{u}(a)\nu\|$ for some $k\geq 0$. If $\delta_{0}(1+kC^{2}e^{-2m(b-a)})<1$, then it holds
$$\|P_{2}T(b,a)\nu\|\leq\frac{\delta_{0}+kC^{2}e^{-2m(b-a)}(1+\delta_{0})}{1-\delta_{0}(1+kC^{2}e^{-2m(b-a)})}\|(1-P_{2})T(b,a)\nu\|.$$
\end{lemma}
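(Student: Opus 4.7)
The strategy is a direct book-keeping argument based on the defining properties of exponential dichotomy together with the projection perturbation bound $\|P_1^s(b)-P_2\|\leq \delta_0$. I would split $\nu$ into its stable and unstable components at $x=a$, propagate each using the dichotomy estimates, and then compare the resulting vectors against the perturbed splitting induced by $P_2$ at $x=b$.

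First, set $\nu_s=P_1^s(a)\nu$, $\nu_u=P_1^u(a)\nu$, and $w_s=T(b,a)\nu_s$, $w_u=T(b,a)\nu_u$. By the invariance of the dichotomy subspaces, $w_s\in R(P_1^s(b))$ and $w_u\in R(P_1^u(b))$, so $P_1^s(b)w_s=w_s$ and $P_1^s(b)w_u=0$. The dichotomy inequalities give the forward decay $\|w_s\|\leq Ce^{-m(b-a)}\|\nu_s\|$ and the backward decay on the unstable part, which inverts to $\|w_u\|\geq C^{-1}e^{m(b-a)}\|\nu_u\|$. Combining these with $\|\nu_s\|\leq k\|\nu_u\|$ yields the crucial comparison
\[
\|w_s\|\;\leq\;kC^{2}e^{-2m(b-a)}\,\|w_u\|\;=\;\alpha\|w_u\|,
\]
where I abbreviate $\alpha:=kC^{2}e^{-2m(b-a)}$.

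Next, I would write $P_2T(b,a)\nu = P_2(w_u+w_s) = w_s + (P_2-P_1^s(b))(w_u+w_s)$ using $P_1^s(b)w_u=0$ and $P_1^s(b)w_s=w_s$. Taking norms and applying $\|P_2-P_1^s(b)\|\leq\delta_0$ together with the comparison $\|w_s\|\leq \alpha\|w_u\|$ gives
\[
\|P_2T(b,a)\nu\|\;\leq\;\bigl(\delta_0+\alpha(1+\delta_0)\bigr)\|w_u\|.
\]
Symmetrically, $(1-P_2)T(b,a)\nu = w_u - (P_2-P_1^s(b))(w_u+w_s)$, so
\[
\|(1-P_2)T(b,a)\nu\|\;\geq\;\bigl(1-\delta_0(1+\alpha)\bigr)\|w_u\|,
\]
which is strictly positive by the hypothesis $\delta_0(1+\alpha)<1$. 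Dividing the two inequalities cancels $\|w_u\|$ and yields exactly the claimed ratio $\bigl(\delta_0+\alpha(1+\delta_0)\bigr)/\bigl(1-\delta_0(1+\alpha)\bigr)$.

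There is no deep obstacle here; everything is a triangle-inequality bookkeeping once one sees that $P_2-P_1^s(b)$ is the only source of mixing between the two pieces and that the $kC^{2}e^{-2m(b-a)}$ factor encodes the relative size of the stable image against the unstable image after propagation. The only place where care is needed is making sure the invariance identity $P_1^s(b)T(b,a)P_1^u(a)=0$ is applied correctly (so that $w_u$ is genuinely killed by $P_1^s(b)$ and not merely made small), and that the inverse estimate on the unstable subspace is invoked with the right orientation, so that $\|w_u\|$ appears on the favorable side of both inequalities.
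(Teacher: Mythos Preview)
The paper does not prove this lemma; it merely quotes it from \cite{CRS} and applies it in Proposition~\ref{pro.no exponential localized in R2}. So there is no in-paper proof to compare against. Your argument is correct and is exactly the natural one: split $\nu$ along the dichotomy at $x=a$, use the forward stable estimate and the inverted unstable estimate to compare $\|w_s\|$ against $\|w_u\|$, then perturb from $P_1^s(b)$ to $P_2$ via the triangle inequality. The two identities $P_2(w_u+w_s)=w_s+(P_2-P_1^s(b))(w_u+w_s)$ and $(1-P_2)(w_u+w_s)=w_u-(P_2-P_1^s(b))(w_u+w_s)$ are the heart of the bookkeeping and you have them right; the division step is legitimate because the hypothesis $\delta_0(1+\alpha)<1$ makes the lower bound on $\|(1-P_2)T(b,a)\nu\|$ strictly positive.
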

{ Recall again that $1$ denotes the unit matrix or the identity operator in case of no confusion.}

\begin{proposition}\label{pro.no exponential localized in R2}
Let $k_{1},\widetilde{M}>0$. For each $\delta>0$ sufficiently small, $a\in[0,\frac{1}{2}-k_{1}]$
and $\lambda\in R_{2}(\delta,\widetilde{M})$, system \eqref{eq.shift} admits no nontrivial exponentially localized solution.
\end{proposition}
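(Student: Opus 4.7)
The plan is to build exponential dichotomies of the shifted system \eqref{eq.shift} on each interval of the partition $\mathbb{R}=I_{f}\cup I_{r}\cup I_{b}\cup I_{l}$, to compare the associated stable/unstable projections at the common endpoints $L_{\epsilon}$, $Z_{a,\epsilon}-L_{\epsilon}$ and $Z_{a,\epsilon}+L_{\epsilon}$, and then to run Lemma~\ref{lem.CRS 2016} along the pieces to rule out a nontrivial exponentially localized solution. On $I_{r}$ and $I_{l}$ we already have exponential dichotomies with rate $\mu>0$ from Proposition~\ref{pro.2}. On $I_{f}=(-\infty,L_{\epsilon}]$ the coefficient matrix of \eqref{eq.shift} is an $O(\epsilon|\log\epsilon|)$--perturbation of the coefficient matrix $A_{f}(\xi,\lambda)$ of the reduced system \eqref{eq.refuced eigenvalue in R2} (cf.~the matrix $B_{f}$ used in Proposition~\ref{pro.5}), and on $I_{b}=[Z_{a,\epsilon}-L_{\epsilon},Z_{a,\epsilon}+L_{\epsilon}]$ an $O(\epsilon|\log\epsilon|)$--perturbation of $A_{b}(\xi,\lambda)$. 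Since by Proposition~\ref{pro.exponential dichotomies on R} both reduced systems have exponential dichotomies on $\mathbb{R}$ with $\lambda$-- and $a$--independent constants $C,\mu/2>0$, the roughness theorem (Theorem~\ref{th.dichotomy perturbation}) yields exponential dichotomies of \eqref{eq.shift} on $I_{f}$ and $I_{b}$ for all $\epsilon$ small enough, with projections $\widetilde{\mathcal{Q}}_{f,b}^{u,s}(\xi,\lambda)$ that are $O(\epsilon|\log\epsilon|)$--close to the corresponding spectral projections $\Pi_{f,b}^{u,s}(\xi,\lambda)$ of the reduced problems.

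Next I would compare projections at the three matching points. Exactly as in the derivation of \eqref{eq.36} in Proposition~\ref{pro.6}, the coefficient matrix of \eqref{eq.shift} approaches its asymptotic value with the fast rate $\sqrt{k/2}\,F(w_{j})$ (for $j=f,b$), and $L_{\epsilon}=-\nu\log\epsilon$ with $\nu$ satisfying \eqref{eq.nu bound}, so at the four endpoints
\[
\bigl\|\widetilde{\mathcal{Q}}_{f}^{u,s}(L_{\epsilon},\lambda)-\mathcal{Q}_{r}^{u,s}(L_{\epsilon},\lambda)\bigr\|,\ \
\bigl\|\widetilde{\mathcal{Q}}_{b}^{u,s}(Z_{a,\epsilon}-L_{\epsilon},\lambda)-\mathcal{Q}_{r}^{u,s}(Z_{a,\epsilon}-L_{\epsilon},\lambda)\bigr\|,
\]
\[
\bigl\|\widetilde{\mathcal{Q}}_{b}^{u,s}(Z_{a,\epsilon}+L_{\epsilon},\lambda)-\mathcal{Q}_{l}^{u,s}(Z_{a,\epsilon}+L_{\epsilon},\lambda)\bigr\|
\]
are all bounded by $C\epsilon|\log\epsilon|$ uniformly in $\lambda\in R_{2}(\delta,\widetilde{M})$ and $a\in[0,\tfrac{1}{2}-k_{1}]$.

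Suppose now that $\varphi(\xi)$ is a nontrivial exponentially localized solution of \eqref{eq.shift}. Because by Lemma~\ref{lem.1} the asymptotic matrix $\hat{A}(0,0,\lambda,a,\epsilon)$ has exactly one eigenvalue of positive real part, the stable and unstable subspaces of each dichotomy above are one-- and two--dimensional. Decaying at $-\infty$ forces $\varphi(L_{\epsilon})\in R(\widetilde{\mathcal{Q}}_{f}^{s}(L_{\epsilon},\lambda))$, and decaying at $+\infty$ forces $\varphi(Z_{a,\epsilon}+L_{\epsilon})\in R(\widetilde{\mathcal{Q}}_{l}^{s}(Z_{a,\epsilon}+L_{\epsilon},\lambda))$. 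Applying Lemma~\ref{lem.CRS 2016} to $\varphi$ on $I_{r}$ with $P_{1}=\mathcal{Q}_{r}^{s}$, $P_{2}=\widetilde{\mathcal{Q}}_{b}^{s}(Z_{a,\epsilon}-L_{\epsilon},\lambda)$, $\delta_{0}=O(\epsilon|\log\epsilon|)$ and $k=O(\epsilon|\log\epsilon|)$ (from the previous step applied on $I_{f}$), together with the exponentially small factor $C^{2}e^{-2\mu|I_{r}|}=O(e^{-q/\epsilon})$ coming from $Z_{a,\epsilon}=O(\epsilon^{-1})$, gives
\[
\bigl\|\widetilde{\mathcal{Q}}_{b}^{s}(Z_{a,\epsilon}-L_{\epsilon},\lambda)\varphi(Z_{a,\epsilon}-L_{\epsilon})\bigr\|
\leq C\epsilon|\log\epsilon|\,\bigl\|\widetilde{\mathcal{Q}}_{b}^{u}(Z_{a,\epsilon}-L_{\epsilon},\lambda)\varphi(Z_{a,\epsilon}-L_{\epsilon})\bigr\|.
\]
Iterating the same argument on $I_{b}$ and then on $I_{l}$, and finally on $I_{f}$, each step contracts by a factor $O(\epsilon|\log\epsilon|)$, so composing the four estimates yields
\[
\bigl\|\widetilde{\mathcal{Q}}_{f}^{s}(L_{\epsilon},\lambda)\varphi(L_{\epsilon})\bigr\|
\leq\bigl(C\epsilon|\log\epsilon|\bigr)^{4}\bigl\|\widetilde{\mathcal{Q}}_{f}^{s}(L_{\epsilon},\lambda)\varphi(L_{\epsilon})\bigr\|,
\]
which forces $\varphi(L_{\epsilon})=0$, hence $\varphi\equiv 0$, contradicting nontriviality.

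The delicate step, which I expect to be the main obstacle, is the bookkeeping in the chain of applications of Lemma~\ref{lem.CRS 2016}: one must verify that at each endpoint the ``input'' ratio $k$ produced by the previous interval and the projection mismatch $\delta_{0}$ together satisfy the smallness hypothesis $\delta_{0}(1+kC^{2}e^{-2m(b-a)})<1$ of the lemma, and that the $O(e^{-q/\epsilon})$ terms from the two long intervals $I_{r}$ and $I_{l}$ genuinely dominate the possibly larger $O(\epsilon|\log\epsilon|)$ contributions from the short ones $I_{f}$, $I_{b}$. Once this uniform--in--$(\lambda,a,\epsilon)$ contraction is established, the conclusion that $R_{2}(\delta,\widetilde{M})$ contains no point spectrum is immediate.
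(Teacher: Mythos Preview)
Your overall strategy is exactly the paper's: build dichotomies on $I_f,I_r,I_b,I_l$, compare projections at the three endpoints (your displayed estimates are the paper's \eqref{eq.6.8}), and then chain Lemma~\ref{lem.CRS 2016}. The setup is fine; the closing of the argument is not.

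Two concrete problems. First, the stable/unstable labels are reversed. With the convention in Appendix~B, a solution decaying as $\xi\to-\infty$ lies in the \emph{unstable} range, i.e.\ $\widetilde{\mathcal{Q}}_f^{s}(\xi,\lambda)\varphi(\xi)=0$ on $I_f$, not $\varphi\in R(\widetilde{\mathcal{Q}}_f^{s})$; correspondingly (Lemma~\ref{lem.1}) the unstable subspace is one--dimensional and the stable one two--dimensional, the opposite of what you wrote. Second, and this is the real gap, your ``iterate on $I_b$, then on $I_l$, and finally on $I_f$'' cannot close: $I_l=[Z_{a,\epsilon}+L_\epsilon,\infty)$ does not reconnect to $I_f$, so there is no fourth application of the lemma returning you to $\xi=L_\epsilon$. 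Even if there were, the self--referential inequality
\[
\bigl\|\widetilde{\mathcal{Q}}_f^{s}(L_\epsilon,\lambda)\varphi(L_\epsilon)\bigr\|\le(C\epsilon|\log\epsilon|)^4\bigl\|\widetilde{\mathcal{Q}}_f^{s}(L_\epsilon,\lambda)\varphi(L_\epsilon)\bigr\|
\]
only forces $\widetilde{\mathcal{Q}}_f^{s}(L_\epsilon,\lambda)\varphi(L_\epsilon)=0$, which (after correcting the labels) is your starting hypothesis and says nothing about the unstable component; it does not give $\varphi(L_\epsilon)=0$.

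The paper closes the chain linearly, not cyclically. Start with $\mathcal{Q}_f^{s}(0,\lambda)\varphi(0)=0$ (so $k=0$ in Lemma~\ref{lem.CRS 2016}) and apply the lemma successively on $[0,L_\epsilon]$, on $I_r$, and on $I_b$, each time producing $\|\mathcal{Q}^{s}(\cdot)\varphi\|\le C\epsilon|\log\epsilon|\,\|\mathcal{Q}^{u}(\cdot)\varphi\|$ at the next right endpoint for the next dichotomy. After three steps one has, at $\xi=Z_{a,\epsilon}+L_\epsilon$,
\[
\|\mathcal{Q}_l^{s}(Z_{a,\epsilon}+L_\epsilon,\lambda)\varphi\|\le C\epsilon|\log\epsilon|\,\|\mathcal{Q}_l^{u}(Z_{a,\epsilon}+L_\epsilon,\lambda)\varphi\|.
\]
Now the decay at $+\infty$ supplies the \emph{second} boundary condition $\mathcal{Q}_l^{u}(Z_{a,\epsilon}+L_\epsilon,\lambda)\varphi(Z_{a,\epsilon}+L_\epsilon)=0$, so the right side vanishes, hence $\varphi(Z_{a,\epsilon}+L_\epsilon)=0$ and therefore $\varphi\equiv 0$. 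Replace your cycle with this termination step and the proof goes through.
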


\begin{proof}
By Theorem \ref{the.expression}$(i)$, there exists an $\epsilon_{0}>0$ such that for $\epsilon\in(0,\epsilon_{0})$ one has the next estimation
\begin{equation}\label{eq.estimate difference}
\begin{split}
\|A(\xi,\lambda)-A_{f}(\xi,\lambda)  \|   & \leq C\epsilon|\log\epsilon|,\ \ \ ~\xi\in(-\infty,L_{\epsilon}], \\
\|A(Z_{a,\epsilon}+\xi,\lambda)-A_{b}(\xi,\lambda)\|  &\leq C\epsilon|\log\epsilon|,\ \ \ ~\xi\in[-L_{\epsilon},L_{\epsilon}],
\end{split}
\end{equation}
with $C>0$ a constant independent of $\lambda,~a,~\epsilon$.
{From Proposition \ref{pro.exponential dichotomies on R}, there exists an exponential dichotomy to system \eqref{eq.refuced eigenvalue in R2} on $\mathbb{R}$ with $\lambda$-- and $a$-- independent constants $C,~{\mu}/{2}>0$ and projections $Q_{j}^{u,s}(\xi,\lambda)=Q_{j}^{u,s}(\xi,\lambda;a),~j=f,~b.$
The spectral projection onto the (un)stable eigenspace of the asymptotic matrices $A_{j,\pm}(\lambda)=A_{j,\pm}(\lambda;a)$ of system \eqref{eq.refuced eigenvalue in R2} is denoted by $P_{j,\pm}^{u,s}(\lambda)=P_{j,\pm}^{u,s}(\lambda;a)$.}
As in the proof of Proposition \ref{pro.6}, it follows that
\begin{equation}\label{eq.6.5}
\begin{split}
\left\|Q_{f}^{u,s}(\pm\xi,\lambda)-P_{f,\pm}^{u,s}(\lambda)\right\| &\leq C\left(e^{-\sqrt{\frac{k}{2}}F(0)\xi}+e^{-\frac{\mu}{2}\xi}\right),\\
\left\|Q_{b}^{u,s}(\pm\xi,\lambda)-P_{b,\pm}^{u,s}(\lambda)\right\| &\leq C\left(e^{-\sqrt{\frac{k}{2}}F(w_{b})U_{2}(w_{b})\xi}+e^{-\frac{\mu}{2}\xi}\right)
\end{split}
\end{equation}
for $\xi\geq 0$. By the estimations in \eqref{eq.estimate difference} and Theorem \ref{th.dichotomy perturbation}, the shifted eigenvalue problem \eqref{eq.shift} admits exponential dichotomies on $I_{f}=(-\infty,L_{\epsilon}]$ and on $I_{b}=[Z_{a,\epsilon}-L_{\epsilon},Z_{a,\epsilon}+L_{\epsilon}]$  with $\lambda$-- and $a$--independent constants $C,{\mu}/{2}>0$ and projections $\mathcal{Q}_{j}^{u,s}(\xi,\lambda)=\mathcal{Q}_{j}^{u,s}(\xi,\lambda;a,\epsilon),~j=f,~b$, which satisfy
\begin{equation}\label{eq.6.6}
\begin{split}
\left\|\mathcal{Q}_{f}^{u,s}(\xi,\lambda)-Q_{f}^{u,s}(\xi,\lambda)\right\|  &\leq C\epsilon|\log\epsilon|,\\
\left\|\mathcal{Q}_{b}^{u,s}(Z_{a,\epsilon}+\xi,\lambda)-Q_{b}^{u,s}(\xi,\lambda)\right\| &\leq C\epsilon|\log\epsilon|,
\end{split} \ \ \ \ \ \  ~|\xi|\leq L_{\epsilon}.
\end{equation}
Simultaneously, by Proposition \ref{pro.2}, system \eqref{eq.shift} admits exponential dichotomies on $I_{r}=[L_{\epsilon},Z_{a,\epsilon}-L_{\epsilon}]$ and on $I_{l}=[Z_{a,\epsilon}+L_{\epsilon},\infty)$  with $\lambda$-- and $a$--independent constants $C,\mu>0$ and projections $\mathcal{Q}_{r,l}^{u,s}(\xi,\lambda)=\mathcal{Q}_{r,l}^{u,s}(\xi,\lambda;a,\epsilon)$, which satisfy
\begin{equation}\label{eq.6.7}
\|(\mathcal{Q}_{r}^{s}-\mathcal{P})(L_{\epsilon},\lambda)\|,~
\|(\mathcal{Q}_{r}^{s}-\mathcal{P})(Z_{a,\epsilon}-L_{\epsilon},\lambda)\|,~
\|(\mathcal{Q}_{l}^{s}-\mathcal{P})(Z_{a,\epsilon}+L_{\epsilon},\lambda)\|
\leq C\epsilon |\log\epsilon|,
\end{equation}
where $\mathcal{P}(\xi,\lambda)=\mathcal{P}(\xi,\lambda;a,\epsilon)$ is the spectral projection onto the stable
eigenspace of $A(\xi,\lambda)$.

Since $A_{f}(\xi,\lambda)$ (resp. $A_{b}(\xi,\lambda)$) converges at the exponential rate $\sqrt{\frac{k}{2}}F(0)$ (resp. $\sqrt{\frac{k}{2}}F(w_{b})U_{2}(w_{b})$) to the asymptotic
matrix $A_{f,\pm}(\lambda)$ (resp. $A_{b,\pm}(\lambda)$) as $\xi\rightarrow\pm\infty$. Combining this with \eqref{eq.estimate difference} and \eqref{eq.nu bound}, we obtain
\[
\|A(L_{\epsilon},\lambda)-A_{f,+}(\lambda)\|,~\|A(Z_{a,\epsilon}\pm L_{\epsilon},\lambda)-A_{b,\pm}(\lambda)\|\leq C\epsilon|\log\epsilon|.
\]
{By continuity, the spectral projections associated with these matrices admit the same bound, namely,}
\begin{equation}\label{eq.6.7.1}
\begin{split}
\|(\mathcal{P}-P_{f,+}^{s})(L_{\epsilon},\lambda)\|,& ~\|[(1-\mathcal{P})-P_{f,+}^{u}](L_{\epsilon},\lambda)\|\leq C\epsilon|\log\epsilon|,\\
\|(\mathcal{P}(Z_{a,\epsilon}\pm L_{\epsilon},\lambda)-P_{f,\pm}^{s}(\lambda)\|, & ~\|(1-\mathcal{P})(Z_{a,\epsilon}\pm L_{\epsilon},\lambda)-P_{b,\pm}^{u}(\lambda)\|\leq C\epsilon|\log\epsilon|.
\end{split}
\end{equation}
By \eqref{eq.6.5}, \eqref{eq.6.6}, \eqref{eq.6.7}, \eqref{eq.6.7.1} and {\eqref{eq.nu bound}}, it holds
\begin{equation}\label{eq.6.8}
\begin{split}
\|(\mathcal{Q}_{r}^{u,s}-\mathcal{Q}_{f}^{u,s})(L_{\epsilon},\lambda)\|&\leq C\epsilon|\log\epsilon|,\\
\|(\mathcal{Q}_{l}^{u,s}-\mathcal{Q}_{b}^{u,s})(Z_{a,\epsilon}+L_{\epsilon},\lambda)\|, \ &
\|(\mathcal{Q}_{r}^{u,s}-\mathcal{Q}_{b}^{u,s})(Z_{a,\epsilon}-L_{\epsilon},\lambda)\|\leq C\epsilon|\log\epsilon|.
\end{split}
\end{equation}
{Denote by $\varphi(\xi)$ an exponentially localized solution of system \eqref{eq.shift} at some $\lambda\in  R_{2}(\delta,\widetilde{M})$, and it holds $\mathcal{Q}_{f}^{s}(0,\lambda)\varphi(0)=0$.}
Combining \eqref{eq.6.8} with $\nu\geq 2/\mu$, together with Lemma \ref{lem.CRS 2016}, we obtain
\begin{equation}\label{eq.6.9}
\|\mathcal{Q}_{r}^{s}(L_{\epsilon},\lambda)\varphi(L_{\epsilon})\|\leq C\epsilon|\log\epsilon| \|\mathcal{Q}_{r}^{u}(L_{\epsilon},\lambda)\varphi(L_{\epsilon})\|.
\end{equation}
At the endpoint $Z_{a,\epsilon}-L_{\epsilon}$, applying Lemma \ref{lem.CRS 2016} to the inequality \eqref{eq.6.9} and using \eqref{eq.6.8} we get a similar inequality as that in \eqref{eq.6.9}
\begin{equation*}\label{eq.6.9.1}
\|\mathcal{Q}_{b}^{s}(Z_{a,\epsilon}-L_{\epsilon},\lambda)\varphi(Z_{a,\epsilon}-L_{\epsilon})\|\leq C\epsilon|\log\epsilon| \|\mathcal{Q}_{b}^{u}(Z_{a,\epsilon}-L_{\epsilon},\lambda)\varphi(Z_{a,\epsilon}-L_{\epsilon})\|.
\end{equation*}
Applying Lemma \ref{lem.CRS 2016} again yields
\begin{equation*}\label{eq.6.10}
\begin{array}{ll}
\|\mathcal{Q}_{l}^{s}(Z_{a,\epsilon}+L_{\epsilon},\lambda)\varphi(Z_{a,\epsilon}+L_{\epsilon})\|&\leq C\epsilon|\log\epsilon| \|\mathcal{Q}_{l}^{u}(Z_{a,\epsilon}+L_{\epsilon},\lambda)\varphi(Z_{a,\epsilon}+L_{\epsilon})\| \\
&=0.
\end{array}
\end{equation*}
This proves that $\varphi(\xi)$ is the trivial solution of system \eqref{eq.shift}.
\end{proof}

Now we have enough preparations to prove our main results.

\section{Proof of the main results}\label{sx6}

\subsection{Proof of Theorem \ref{the.stability}}

In the regime $a\in(0,1/2)$, the essential spectrum of $\mathcal{L}_{a,\epsilon}$ is contained in the half plane $\left\{\lambda\in\mathbb C|\ \mbox{\rm Re}(\lambda)\leq \max\{-\epsilon \gamma,-ka\}\right\}$ by Proposition \ref{pro.ess}.
{From subsection 5.1 and Proposition \ref{pro.no exponential localized in R2} the regions $R_{2}$ and $R_{3}$  do not intersect the point spectrum of $\mathcal{L}_{a,\epsilon}$.
By Theorem \ref{the.lambda approximate} and Proposition \ref{pro.lambda0 simple}, the point spectrum in $R_{1}(\delta)$ to the right--hand side of the essential spectrum admits at most two eigenvalues. One of the eigenvalues is the simple translational eigenvalue $\lambda_{0}=0$ and the other real eigenvalue is $\lambda_{1}$, which is approximated by $-M_{b,2}M_{b,1}^{-1}$, where $M_{b,1}>0$ is $\epsilon$--independent and bounded by an $a$--independent constant.}
Finally, applying Proposition \ref{pro.lambda1 approximate} to estimate
$M_{b,2}$ arrives the conclusion that there exists a constant $b_{0}>0$ such that $\lambda_{1}\leq -\epsilon b_{0}$. This finishes the proof of the theorem.
\qed

\subsection{Proof of Theorem \ref{the.stability for a=0}}\label{sx62}

The proof follows directly from Propositions \ref{pro.ess} and \ref{pro.1.a=0}, subsection 5.1 and the fact that the shifted eigenvalue system \eqref{eq.shift} does not have an eigenvalue in the region $\Omega_{+}$. We are done.
\qed

\section{Appendix }\label{sxa}
\hspace{0.13in}

\subsection{Appendix A: The proof of Theorem \ref{the.expression}}
\hspace{0.13in}

For $a\in (0,\ 1/2)$, by Fenichel theory \cite{Fen71,Fen74,Fen79}, the segments $S_{0}^{r}$ and $L_{0}$ persist for sufficiently small $\epsilon>0$ as locally invariant manifolds $S_{\epsilon}^{r}$ and $L_{\epsilon}$. { When $a=0$, this argument holds too,  which was obtained by Shen and Zhang \cite{SZ} combining Fenichel's three theorems and the center manifold theorem. In both cases $L_\epsilon$ approaches the origin as $\xi \rightarrow +\infty$.}  In addition, the center--stable manifold $W^{s}(S_{0}^{r})$ and the  center--unstable manifold $W^{u}(S_{0}^{r})$
persist as locally invariant center--stable and center--unstable manifolds $W^{s}(S_{\epsilon}^{r})$ and $W^{u}(S_{\epsilon}^{r})$, respectively. Similarity, the center--stable manifold $W^{s}(L_{0})$ and the center--unstable manifold $W^{u}(L_{0})$
persist as locally invariant center--stable and center--unstable manifolds $W^{s}(L_{\epsilon})$ and $W^{u}(L_{\epsilon})$, respectively.

For any $r\in \mathbb{N}$, there exists a $C^{r}$--change of coordinates $\Psi_{\epsilon}: \mathcal{U}\rightarrow \mathbb{R}^3$, with $\mathcal{U}$ an $\epsilon$--independent open neighborhood of $S_{\epsilon}^{r}$, such that the flow under this new coordinates is given by the Fenichel normal form system \cite{Fen79,Jones95} (with notation abuse)
\begin{equation}\label{eq.Fenichel normal form}
\begin{split}
U'&=-\Lambda(U,V,W;c,a,\epsilon)U,\\
V'&=\Gamma(U,V,W;c,a,\epsilon)V,\\
W'&=\epsilon(1+H(U,V,W;c,a,\epsilon)UV).
\end{split}
\end{equation}
Here the functions $\Lambda,~\Gamma$ and $H$ are $C^{r}$, and $\Lambda$ and $\Gamma$ are bounded below away from zero.
The slow manifold $S_{\epsilon}^{r}$ is represented by $U=V=0$, and the manifolds $W^{u}(S_{\epsilon}^{r})$ and $W^{s}(S_{\epsilon}^{r})$
are given by $U=0$ and $V=0$, respectively.  Consider the Fenichel neighborhood $\Psi_{\epsilon}(\mathcal{U})$, which contains the  box
$$
\left\{(U,V,W){\in \mathbb R^3}|~U,V\in [-\Theta,\Theta],~W\in [-\Theta,W^{*}+\Theta] \right \}
$$
for $W^{*}>0,~0<\Theta\ll W^{*}$, both independent of $\epsilon$. And then, we define two {manifolds}
\begin{equation*}\label{entry,exit manifolds}
\begin{split}
N_{1}&:=\left\{(U,V,W)\in \mathbb R^3|\ ~U=\Theta,~V\in [-\Theta,\Theta],~W\in [-\Theta,\Theta]  \right\},\\
{N_{2}}&:=\left\{(U,V,W)\in \mathbb R^3|\ ~U,V\in [-\Theta,\Theta],~W=W_{0}  \right\},
\end{split}
\end{equation*}
with the flow of the Fenichel normal form system entering in $N_1$ and exiting from $N_2$, called \textit{entry manifold} and \textit{exit manifold}, where $0<W_{0} <W^{*}$. The next result will also be used in the proof of Theorem \ref{the.expression}.

\begin{theorem}\label{th.Eszter}$($\cite{CRS,Eszter,HDK}$)$
Assume that
\begin{itemize}
\item $\Xi(\epsilon)$ is a continuous function of $\epsilon$ satisfying
\begin{equation}\label{eq.Xi epsilon}
\begin{array}{ll}
\lim\limits_{\epsilon\rightarrow 0}\Xi(\epsilon)=\infty,~\lim\limits_{\epsilon\rightarrow 0}\epsilon\Xi(\epsilon)=0.
\end{array}
\end{equation}
\item there is a one--parameter family of solutions $(U,V,W)(\xi,\cdot)$
to the Fenichel normal form system \eqref{eq.Fenichel normal form} with $$(U,V,W)(\xi_{1},\epsilon)\in N_{1},~(U,V,W)(\xi_{2}(\epsilon),\epsilon)\in N_{2}$$
and $\lim\limits_{\epsilon\rightarrow 0}W(\xi_{1},\epsilon)=0$ for some ${ \xi_{1},~\xi_{2}(\epsilon)}\in \mathbb{R}$.
\end{itemize}
Let $U_{0}(\xi)$ be the solution to the system
\begin{equation}\label{eq.U}
\begin{array}{ll}
U'=-\Lambda(U,0,0;c,a,0)U,
\end{array}
\end{equation}
satisfying $U_{0}(\xi_{1})=\Theta+\widetilde{U}_{0}$ with $|\widetilde{U}_{0}|\ll \Theta$. Then, for $0<\epsilon\ll 1$, there holds the next estimation
\begin{equation*}\label{bound.epsilon Xi epsilon}
\begin{array}{ll}
\| (U,V,W)(\xi,\epsilon)-(U_{0}(\xi),0,0) \|\leq C\left(\epsilon\Xi(\epsilon)+|\widetilde{U}_{0}|+|W(\xi_{1},\epsilon)|\right),
\end{array}
\end{equation*}
for $\xi\in[\xi_{1},\Xi(\epsilon)]$, where $C>0$ is a constant independent of $a$ and $\epsilon$.
\end{theorem}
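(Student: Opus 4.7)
The strategy exploits the slow--fast structure of the Fenichel normal form \eqref{eq.Fenichel normal form}, in which $U$ is exponentially contracting, $V$ is exponentially expanding, and $W$ evolves at the slow rate $\epsilon$. The aim is to bound the three components separately, handling $W$, then the unstable direction $V$, and finally the contracting direction $U$ by a Gronwall--type comparison with $U_{0}$.

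First I would control $W$. From \eqref{eq.Fenichel normal form} together with $|U|,|V|\leq \Theta$ and the uniform boundedness of $H$ on $\Psi_{\epsilon}(\mathcal{U})$, one has $|W'(\xi)|\leq \epsilon(1+C\Theta^{2})$. Integrating on $[\xi_{1},\xi]$ with $\xi\in[\xi_{1},\Xi(\epsilon)]$ gives
\[
|W(\xi,\epsilon)|\leq |W(\xi_{1},\epsilon)|+C\epsilon\,\Xi(\epsilon),
\]
which already matches the right hand side of the desired estimate. As a by--product, since $W(\xi_{2}(\epsilon),\epsilon)=W_{0}>0$ while $W(\xi_{1},\epsilon)\to 0$, the exit time satisfies $\xi_{2}(\epsilon)-\xi_{1}\geq W_{0}/(2C\epsilon)$ for all small $\epsilon$.

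Next I would control $V$, which is the main subtlety. Forward integration is useless because $V'=\Gamma V$ with $\Gamma\geq\gamma_{0}>0$ is expanding. Instead I would integrate backward from $\xi_{2}(\epsilon)$, where $|V(\xi_{2}(\epsilon),\epsilon)|\leq\Theta$, to get
\[
|V(\xi,\epsilon)|\leq\Theta\, e^{-\gamma_{0}(\xi_{2}(\epsilon)-\xi)},\qquad \xi\leq\xi_{2}(\epsilon).
\]
The hypothesis $\epsilon\,\Xi(\epsilon)\to 0$ forces $\Xi(\epsilon)\leq W_{0}/(4C\epsilon)$ for all small $\epsilon$, so combining with the lower bound on $\xi_{2}(\epsilon)-\xi_{1}$ above yields $\xi_{2}(\epsilon)-\xi\geq W_{0}/(4C\epsilon)$ uniformly in $\xi\in[\xi_{1},\Xi(\epsilon)]$. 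Consequently $|V(\xi,\epsilon)|\leq\Theta\, e^{-\gamma_{0}W_{0}/(4C\epsilon)}$, which is transcendentally small in $\epsilon$ and hence absorbed by the $\epsilon\,\Xi(\epsilon)$ term.

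Finally I would control $U$ by comparison. Setting $\widetilde U:=U-U_{0}$ and subtracting \eqref{eq.U} from the $U$--equation in \eqref{eq.Fenichel normal form} yields
\[
\widetilde U{}'=-\Lambda(U,V,W;c,a,\epsilon)\,\widetilde U-\bigl[\Lambda(U,V,W;c,a,\epsilon)-\Lambda(U_{0},0,0;c,a,0)\bigr]U_{0}.
\]
Since $\Lambda\geq\lambda_{0}>0$ on the Fenichel neighborhood, the first term is strictly contracting, while the $C^{r}$ regularity of $\Lambda$ gives the Lipschitz bound
\[
\bigl|\Lambda(U,V,W;c,a,\epsilon)-\Lambda(U_{0},0,0;c,a,0)\bigr|\leq L\bigl(|\widetilde U|+|V|+|W|+\epsilon\bigr).
\]
Starting from $|\widetilde U(\xi_{1})|=|\widetilde U_{0}|$, a Gronwall argument on $[\xi_{1},\Xi(\epsilon)]$ together with the estimates for $|V|$ and $|W|$ above produces
\[
|\widetilde U(\xi)|\leq |\widetilde U_{0}|+C\bigl(\epsilon\,\Xi(\epsilon)+|W(\xi_{1},\epsilon)|\bigr).
\]
Assembling the three coordinate bounds gives the claim. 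The main obstacle is the treatment of the unstable component $V$: without the exit condition at $\xi_{2}(\epsilon)$ combined with the hypothesis $\epsilon\,\Xi(\epsilon)\to 0$, $V$ could grow past the Fenichel box well before $\xi=\Xi(\epsilon)$. The estimates for $W$ and $U$ are then standard integration and Gronwall calculations once $V$ is known to be negligible.
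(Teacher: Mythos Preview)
The paper does not supply its own proof of this theorem: it is quoted from the cited references \cite{CRS,Eszter,HDK} and then immediately used as a black box in the proof of Theorem~\ref{the.expression}. So there is no ``paper's proof'' to compare against here.

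Your argument is the standard one from those references and is sound. The decomposition into the slow component $W$ (direct integration), the unstable component $V$ (backward integration from the exit section $N_{2}$, using that $\xi_{2}(\epsilon)-\Xi(\epsilon)$ is of order $\epsilon^{-1}$ so that $V$ is exponentially small on the window $[\xi_{1},\Xi(\epsilon)]$), and the stable component $U$ (Gronwall comparison with $U_{0}$) is exactly how Eszter and the later authors proceed. One small point worth tightening in the $U$--step: the linear coefficient $-\Lambda+L|U_{0}|$ in your differential inequality for $|\widetilde U|$ need not be negative near $\xi=\xi_{1}$ when $|U_{0}|\approx\Theta$, so a naive Gronwall on the full interval $[\xi_{1},\Xi(\epsilon)]$ could in principle produce an $e^{C\Xi(\epsilon)}$ factor. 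This is handled by splitting at a fixed ($\epsilon$--independent) time $\xi^{*}$ after which $|U_{0}(\xi)|\leq \lambda_{0}/(2L)$; on the bounded piece $[\xi_{1},\xi^{*}]$ Gronwall gives a harmless constant, and on $[\xi^{*},\Xi(\epsilon)]$ the inequality $g'\leq -\tfrac{\lambda_{0}}{2}g+CK$ yields a uniform bound. With that routine adjustment your proof is complete.
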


Having the above preparation, we can prove Theorem \ref{the.expression}.

\begin{proof}
Recall that $\Xi_{\tau}(\epsilon):=-\tau \log\epsilon$, for every $\tau>0$, satisfies the condition \eqref{eq.Xi epsilon}.

\noindent $(i)$.  By the geometric singular perturbation theory, we know that the solution $\phi_{a,\epsilon}(\xi)$ is $a$--uniformly $O(\epsilon)$--close to $(\phi_{f}(\xi),0)$ upon entry in $N_{1}$ at $\xi_{f}=O(1)$. Since $(\phi_{f}(\xi),0)$ exponentially converges to $(1,0,0)$ for $\epsilon=0$, it must be located in $W^{s}(S_{0}^{r})$. Thus, it holds that $\Psi_{0}(\phi_{f}(\xi),0)=(U_{0}(\xi),0,0)$, where $U_{0}(\xi)$ is a solution to system \eqref{eq.U}. Let $\Psi_{\epsilon}(\phi_{a,\epsilon}(\xi))=(U_{a,\epsilon}(\xi),V_{a,\epsilon}(\xi),W_{a,\epsilon}(\xi))$. Then we derive
$$\| (U_{a,\epsilon}(\xi),V_{a,\epsilon}(\xi),W_{a,\epsilon}(\xi))-(U_{0}(\xi),0,0) \|\leq C\epsilon \Xi_{\tau}(\epsilon)$$
for $\xi\in[\xi_{f},\Xi_{\tau}(\epsilon)]$ by Theorem \ref{th.Eszter}. Since the transformation $\Psi_{\epsilon}$ for getting the Fenichel normal form is $C^{r}$--smooth in $\epsilon$, when transforming back to the $(u,v,w)$--coordinates there incur at most $O(\epsilon)$ error.
Therefore, $\phi_{a,\epsilon}(\xi)$ is $a$--uniform $O(\epsilon\Xi_{\tau}(\epsilon))$--close to $(\phi_{f}(\xi),0)$ for $\xi\in[\xi_{f},\Xi_{\tau}(\epsilon)]$. This proves the first estimation of statement $(i)$.

Again by the geometric singular perturbation theory, the pulse solution $\phi_{a,\epsilon}(\xi)$ arrives a neighborhood of the slow manifold $S_{\epsilon}^{r}$, where the flow spends the time being of order $\epsilon^{-1}$.
{Let $Z_{a,\epsilon}=O(\epsilon^{-1})$ be the leading order of the time at which the pulse solution along the back exits the Fenichel neighborhood
$\mathcal{U}$ of  $S_{\epsilon}^{r}$. Using the similar manner as treating the flow in a neighborhood of the left slow
manifold $L_{\epsilon}$,} we can obtain the second estimation in $(i)$ by using similar arguments
as those in the proof of the first estimation.

{\noindent $(ii)$. Taking the $a$-- and $\epsilon$--independent neighborhood $\mathcal{U}$ smaller if necessary
and choosing the $a$-- and $\epsilon$--independent $\xi_{0}$ sufficiently large, there holds that $\phi_{a,\epsilon}(\xi)$ lies in the region $\mathcal{U}$ for
$\xi\in[\xi_{0},Z_{a,\epsilon}-\xi_{0}]$.} This verifies the estimation in $(ii)$ along the right branch $S_{0}^{r}$.

\noindent $(iii)$. The estimation could be proved using similar arguments as those in the proof of $(ii)$ along the left branch $L_0$.

It completes the proof of Theorem \ref{the.expression}.
\end{proof}

\subsection{Appendix B}
\hspace{0.13in}

For readers' convenience we recall here some results on exponential dichotomy and trichotomy. For details, we refer readers to \cite{CRS}. We also recall a theorem on Sturm--Liouville eigenvalue problem on an infinite interval.

The linear differential system
\begin{equation}\label{Appendix linear system}
  \varphi'=A(x)\varphi
\end{equation}
is said to have an \textit{exponential dichotomy} on $J$ if there are constants $~C,\mu>0$ and linear projections
$P^{s}(x),~P^{u}(x):~\mathbb{C}^{n}\rightarrow \mathbb{C}^{n},~x\in J$ satisfying, for all $x,~y\in J$,
\begin{itemize}
\item $P^{s}(x)+P^{u}(x)=1$,
 \item $P^{s,u}(x)T(x,y)=T(x,y)P^{s,u}(y)$,
\item $\|T(x,y)P^{s}(y)\|,~\|T(y,x)P^{u}(x)\|\leq Ce^{-\mu(x-y)}$ for $x\geq y$.
\end{itemize}
Here, $T(x,y)$ is the evolution operator of the linear system \eqref{Appendix linear system}, and the interval $J$ is typically
taken to be either $\mathbb{R}$, or $\mathbb{R_{+}}$ or $\mathbb{R_{-}}.$

Equation \eqref{Appendix linear system} has an \textit{exponential trichotomy} on $J$ with constants $C,~\mu,~\nu>0$ and
projections $P^{s}(x),~P^{u}(x),~P^{c}(x):~\mathbb{C}^{n}\rightarrow \mathbb{C}^{n},~x\in J$ if the next conditions hold for all $x,~y\in J$,
\begin{itemize}
\item $P^{s}(x)+P^{u}(x)+P^{c}(x)=1$,
 \item $P^{s,u,c}(x)T(x,y)=T(x,y)P^{s,u,c}(y)$,
\item $\|T(x,y)P^{s}(y)\|,~\|T(y,x)P^{u}(x)\|\leq Ce^{-\mu(x-y)}$ for $x\geq y$,
 \item  $\|T(x,y)P^{c}(y)\|\leq Ce^{-\nu|x-y|}$.
\end{itemize}
We often use the abbreviations $T^{s,u,c}(x,y)=T(x,y)P^{s,u,c}(y)$ to implicitly represent the corresponding projections of the dichotomy or of the trichotomy.

There are many known results on existence of exponential dichotomies. Here we only recall those we have used in the proof of our main results.

\begin{theorem}\label{th.dichotomy perturbation}$($\cite{1978book,HDK}$)$
Let $\varphi\in \mathbb{R}^{n}$ and suppose that the
linear system \eqref{Appendix linear system} 
has an exponential dichotomy on $J$. Then the linear system
$\varphi'=(A(x)+B(x))\varphi$ also has an exponential dichotomy on $J$ provided $\|B(x)\| <\delta$ for all $x\in J$, where $\delta>0$ is chosen to be sufficiently small.
\end{theorem}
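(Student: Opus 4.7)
The plan is to prove this classical roughness result by the variation of constants / fixed point method. Without loss of generality I would take $J=\mathbb{R}_+$ (the cases $J=\mathbb{R}$ and $J=\mathbb{R}_-$ are analogous, and for $J=\mathbb{R}$ one pastes the two half-line constructions together). Let $T(x,y)$ denote the evolution of the unperturbed system with projections $P^{s,u}(x)$ and write $T^{s}(x,y)=T(x,y)P^{s}(y)$, $T^{u}(x,y)=T(x,y)P^{u}(y)$ satisfying the exponential bounds $\|T^{s}(x,y)\|, \|T^{u}(y,x)\|\leq Ce^{-\mu(x-y)}$ for $x\geq y$. By the variation of constants formula, a function $\varphi$ is a solution of the perturbed equation on $[x_0,\infty)$ if and only if, for any fixed $v\in\operatorname{R}(P^{s}(x_0))$,
\begin{equation*}
\varphi(x)=T^{s}(x,x_0)v+\int_{x_0}^{x}T^{s}(x,z)B(z)\varphi(z)\,dz-\int_{x}^{\infty}T^{u}(x,z)B(z)\varphi(z)\,dz.
\end{equation*}

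The first step is to fix $0<\hat{\mu}<\mu$ and work in the Banach space of continuous functions $\varphi\colon[x_0,\infty)\to\mathbb{C}^n$ endowed with the weighted norm $\|\varphi\|_{\hat{\mu}}=\sup_{x\geq x_0}\|\varphi(x)\|e^{\hat{\mu}(x-x_0)}$. Denote by $\mathcal{F}_v$ the right-hand side above. A direct estimate using $\|B(z)\|<\delta$ shows $\|\mathcal{F}_v(\varphi_1)-\mathcal{F}_v(\varphi_2)\|_{\hat{\mu}}\leq \frac{2C\delta}{\mu-\hat{\mu}}\|\varphi_1-\varphi_2\|_{\hat{\mu}}$, so for $\delta>0$ sufficiently small the map $\mathcal{F}_v$ is a contraction, producing a unique fixed point $\varphi_v=\varphi_v(\cdot;x_0)$ depending linearly on $v$ and decaying like $e^{-\hat{\mu}(x-x_0)}$. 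The map $v\mapsto\varphi_v(x_0;x_0)$ is then injective and its range defines a candidate stable subspace $\tilde{E}^{s}(x_0)\subset\mathbb{C}^n$.

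The second step is the symmetric construction on the unstable side: for $w\in\operatorname{R}(P^{u}(x_0))$ I would solve the analogous fixed-point problem on a compact subinterval and pass to the limit, or equivalently use the dual variation of constants to construct an unstable subspace $\tilde{E}^{u}(x_0)$ of bounded solutions in backward time (on $\mathbb{R}_+$, this produces the subspace complementary to $\tilde E^s(x_0)$ at $x_0$). One then shows $\tilde{E}^{s}(x_0)\oplus\tilde{E}^{u}(x_0)=\mathbb{C}^n$: for $\delta$ small, $\tilde{E}^{s,u}(x_0)$ are $O(\delta)$-perturbations of $\operatorname{R}(P^{s,u}(x_0))$, so the projection along $\tilde{E}^{u}(x_0)$ onto $\tilde{E}^{s}(x_0)$ is a well-defined linear map $\tilde{P}^{s}(x_0)$ close to $P^{s}(x_0)$. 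Propagating this construction along the flow gives projections $\tilde{P}^{s,u}(x)$ that commute with the perturbed evolution $\tilde{T}(x,y)$.

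The third step is to verify the exponential bounds $\|\tilde{T}(x,y)\tilde{P}^{s}(y)\|, \|\tilde{T}(y,x)\tilde{P}^{u}(x)\|\leq \tilde{C}e^{-\tilde{\mu}(x-y)}$ for $x\geq y$ with some $\tilde{\mu}\in(0,\mu)$ and $\tilde{C}>0$; these follow by inserting the contraction estimate for $\varphi_v$ back into the defining equation and applying the original dichotomy bounds together with Gronwall-type arguments. I expect the main technical obstacle to be the book-keeping showing that $\tilde{E}^{s}(x_0)$ and $\tilde{E}^{u}(x_0)$ together span $\mathbb{C}^n$ with the complementary projection of norm $O(1)$ uniformly in $x_0\in J$; this is where the smallness of $\delta$ relative to the constants $C$ and $\mu$ of the unperturbed dichotomy enters in an essential way, and where one must be careful that constants are chosen uniformly along the entire interval $J$ rather than locally.
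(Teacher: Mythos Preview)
The paper does not give its own proof of this statement: Theorem~\ref{th.dichotomy perturbation} is listed in Appendix~B as a recalled fact from the references \cite{1978book,HDK} (Coppel's roughness theorem), with no argument supplied. Your outline via the variation-of-constants formula and a contraction in a weighted space is precisely the classical Lyapunov--Perron proof found in those sources, so there is nothing to compare against in the paper itself; your sketch is correct and matches the cited literature. One small caution: on a half-line $J=\mathbb{R}_+$ only the stable subspace is uniquely determined by the dichotomy, so your ``unstable side'' construction should simply produce \emph{some} complement (e.g.\ by perturbing $\operatorname{R}(P^u(x_0))$ directly), not a canonical one---your parenthetical remark suggests you are aware of this, but it is worth stating explicitly.
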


\begin{theorem}\label{th.dichotomy intergral}$($\cite{1965book,HDK}$)$
If the linear system \eqref{Appendix linear system} 
has an exponential dichotomy on $J$, and
$$\int_{J}\|B(x)\|dx <\infty,$$
then the system $\varphi'=(A+B(x))\varphi$ also has an exponential dichotomy on $J$  with the
same decay rates as those for system \eqref{Appendix linear system}.
\end{theorem}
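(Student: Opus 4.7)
My plan is to carry out the classical Lyapunov--Perron construction: build the perturbed stable and unstable subspaces directly from an integral equation whose contraction constant is controlled by $\int_{J}\|B(z)\|\,dz$ rather than by $\sup_{J}\|B\|$. The $L^{1}$ hypothesis will both ensure the contraction and, through a weighted Gronwall argument, preserve the unperturbed decay rate exactly, which is what sets this statement apart from a direct application of Theorem~\ref{th.dichotomy perturbation}.

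\textbf{Key steps.} Let $T(x,y)$ and $P^{s,u}(x)$ denote the evolution and projections of the unperturbed dichotomy with constants $C,\mu>0$. First, for $y\in J$ and $\xi\in\mathrm{R}(P^{s}(y))$, I would look for a bounded solution of the perturbed system on $[y,\infty)\cap J$ satisfying the Lyapunov--Perron equation
\[
\tilde\varphi(x)=T^{s}(x,y)\xi+\int_{y}^{x}T^{s}(x,z)B(z)\tilde\varphi(z)\,dz-\int_{x}^{\infty}T^{u}(x,z)B(z)\tilde\varphi(z)\,dz.
\]
Using $\|T^{s}(x,z)\|,\|T^{u}(z,x)\|\le C$ for $x\ge z$, the right--hand side defines an affine map on $C_{b}([y,\infty),\mathbb{C}^{n})$ whose Lipschitz constant is at most $2C\int_{y}^{\infty}\|B(z)\|\,dz$. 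Since $\|B\|\in L^{1}(J)$, this constant is $\le 1/2$ for all $y\ge y_{0}$ with $y_{0}$ sufficiently large, so Banach's theorem yields a unique fixed point $\tilde T^{s}(\cdot,y)\xi$ depending linearly on $\xi$. An analogous construction on the opposite tail (with the roles of $T^{s}$ and $T^{u}$ interchanged) produces $\tilde T^{u}$.

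Next, to recover the exact rate $\mu$, I would apply a weighted Gronwall estimate. Setting $f(x)=e^{\mu(x-y)}\|\tilde\varphi(x)\|$ and multiplying the integral identity by $e^{\mu(x-y)}$, the kernel factors $e^{\mu(x-z)}T^{s}(x,z)$ and $e^{-\mu(z-x)}T^{u}(x,z)$ remain uniformly bounded by the dichotomy estimates, so
\[
f(x)\le C\|\xi\|+C\int_{y}^{\infty}\|B(z)\|f(z)\,dz.
\]
Gronwall's inequality then gives $f(x)\le C\|\xi\|\exp(C\int_{J}\|B\|)$, hence $\|\tilde T^{s}(x,y)\xi\|\le C_{1}e^{-\mu(x-y)}\|\xi\|$ with the same rate $\mu$. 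Finally, I would extend the dichotomy from the tail to all of $J$ by propagating: set $\tilde P^{s,u}(y)=\tilde T(y,y_{0})\tilde P^{s,u}(y_{0})\tilde T(y_{0},y)$ via the full perturbed evolution, which is well defined and invertible on the compact intermediate interval.

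\textbf{Main obstacle.} The critical difficulty is preserving the decay rate $\mu$ exactly rather than losing an arbitrarily small amount, and the weighted Gronwall step is the mechanism that accomplishes it: the weight $e^{\mu(x-z)}$ is absorbed by the dichotomy kernel so that only $\|B(z)\|$ survives in the integrand, after which $L^{1}$ integrability closes the estimate uniformly in $y$. A secondary technical point is to verify that the perturbed subspaces $\mathrm{R}(\tilde P^{s,u}(y_{0}))$ are genuinely complementary, not merely close to complementary; this follows from the fact that they are $O(\int_{|z|\ge y_{0}}\|B\|)$--close to the unperturbed complementary subspaces provided $y_{0}$ is chosen large enough, and complementarity then propagates along $\tilde T(x,y)$ to all of $J$.
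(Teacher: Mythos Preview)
The paper does not supply its own proof of this theorem: it is stated in Appendix~B as a known result, with citations to Coppel~\cite{1965book} and Holzer--Doelman--Kaper~\cite{HDK}, and the surrounding text explicitly says these results are recalled for readers' convenience. So there is no in--paper argument to compare your proposal against.

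That said, your Lyapunov--Perron construction is exactly the classical route taken in the cited sources. The fixed--point argument on the tail $[y_0,\infty)\cap J$ with contraction constant controlled by $\int_{|z|\ge y_0}\|B(z)\|\,dz$, followed by the weighted estimate to recover the rate $\mu$ and propagation to all of $J$ via the perturbed evolution, is the standard mechanism. One small remark on your Gronwall step: the inequality you write,
\[
f(x)\le C\|\xi\|+C\int_{y}^{\infty}\|B(z)\|f(z)\,dz,
\]
has a right--hand side independent of $x$, so you are really taking a supremum and solving $\sup f\le C\|\xi\|+C\bigl(\int\|B\|\bigr)\sup f$, which requires $C\int_{[y,\infty)}\|B\|<1$ rather than Gronwall per se; this is harmless since you have already restricted to $y\ge y_0$ with that tail integral small. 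With that clarification your outline is correct and matches the literature argument.
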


\begin{theorem}\label{th.dichotomy}$($\cite{1978book,HDK}$)$
Suppose that
\begin{itemize}
\item the $n \times n$ matrix $A(x)$
is bounded and hyperbolic for all $x\in J$ with $k$ eigenvalues having their real parts less than
$-\alpha< 0$ and $n-k$ eigenvalues having their real parts greater than $\beta> 0$;
\item $A(x)$ is continuously differentiable and there exists a $\delta > 0$ such that $\|A(x)\|<\delta$
for all $x \in J$.
\end{itemize}
Then the linear system \eqref{Appendix linear system} 
has an exponential dichotomy on $J$.
\end{theorem}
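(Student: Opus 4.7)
\textbf{Proof proposal for Theorem \ref{th.dichotomy}.} The statement is a classical result on slowly varying hyperbolic systems (interpreting the second bullet as a smallness condition on $\|A'(x)\|$ rather than $\|A(x)\|$, which the hypothesis of a uniform hyperbolic gap forbids from being small). The plan is the ``freezing'' argument: exploit that on very short sub-intervals the equation $\varphi'=A(x)\varphi$ is close to a constant-coefficient hyperbolic system, and then patch the local dichotomies together using the roughness result Theorem \ref{th.dichotomy perturbation}.

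The first step is to construct candidate projections. Because $A(x)$ is hyperbolic uniformly in $x\in J$ with spectral gap bounded away from zero by $\min\{\alpha,\beta\}$, one can enclose the stable and unstable parts of $\sigma(A(x))$ by two fixed contours $\Gamma_s,\Gamma_u\subset\mathbb{C}$ that are independent of $x$. The Dunford--Riesz projections
\[
P^s(x)=\frac{1}{2\pi i}\oint_{\Gamma_s}(\zeta I-A(x))^{-1}\,d\zeta,\qquad P^u(x)=I-P^s(x),
\]
are continuously differentiable in $x$, and $\|(P^{s,u})'(x)\|\le C\|A'(x)\|\le C\delta$ for a constant $C$ depending only on the contours and on $\sup\|A\|$.

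The second step is local construction. Fix a mesh $x_n=x_0+nh$ with $h>0$ to be chosen. On $[x_n,x_{n+1}]$ the frozen system $\psi'=A(x_n)\psi$ admits an exponential dichotomy with projections $P^{s,u}(x_n)$ and constants $1,\min\{\alpha,\beta\}$. Since $\|A(x)-A(x_n)\|\le \delta h$ uniformly on this interval, Theorem \ref{th.dichotomy perturbation} (applied at fixed length $h$, so that the smallness threshold depends only on $h$ and on the spectral gap) gives an exponential dichotomy on $[x_n,x_{n+1}]$ for the true evolution $T(x,y)$ with projections $\widetilde P^{s,u}_n(x)$ satisfying $\|\widetilde P^{s,u}_n(x)-P^{s,u}(x_n)\|\le C'\delta h$. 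Choose $h$ first so that the local roughness argument applies, then require $\delta$ so small that $C'\delta h<\frac12$ say.

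The third, and principal, step is patching. One needs to check that the local projections $\widetilde P^{s,u}_n$ on adjacent intervals are compatible so that the union is a genuine dichotomy on all of $J$. Since $\|P^{s,u}(x_n)-P^{s,u}(x_{n+1})\|\le C\delta h$ and each $\widetilde P^{s,u}_n$ is $O(\delta h)$-close to $P^{s,u}(x_n)$, the ranges and kernels glue with an $O(\delta h)$ mismatch at each mesh point. This is precisely the situation handled by iterating Theorem \ref{th.dichotomy perturbation} on the union $[x_0,x_N]$: define a piecewise-constant approximation $\bar A(x):=A(x_n)$ on each $[x_n,x_{n+1})$, whose evolution admits a global dichotomy (the spectral projections of $\bar A$ can be made to agree across mesh points up to a near-identity change of basis that costs $O(\delta h)$ per step and is controlled by a discrete Gronwall-type summation). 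Roughness then upgrades this to a dichotomy of the true equation $\varphi'=A(x)\varphi$ on $J$, with decay rate $\mu$ slightly smaller than $\min\{\alpha,\beta\}$ and a constant $C$ depending on $h$, the gap, and $\sup\|A\|$.

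The main obstacle is the patching at step three: one must show that the cumulative drift of the local projections across many mesh points does not destroy the global dichotomy. The classical way around this is to work with the invariant stable and unstable bundles directly, writing them as graphs over $R(P^s(x))$ and $R(P^u(x))$ and setting up a Riccati equation whose linearization is block-diagonal with hyperbolic blocks; the nonlinearity has a factor of $\|A'\|\le\delta$, so for $\delta$ small a Banach fixed-point argument in the space of bounded graph maps produces globally invariant subspaces $V^{s,u}(x)$. Exponential decay on these bundles follows from comparing with the frozen evolutions. This Riccati route cleanly avoids the combinatorial bookkeeping of the patching approach and delivers the dichotomy in one stroke.
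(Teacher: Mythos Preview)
The paper does not prove this theorem; it is quoted in Appendix B as a known result from Coppel \cite{1978book} and \cite{HDK}, with no proof supplied. So there is nothing in the paper to compare against.

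That said, your sketch is essentially the classical argument (Coppel, \emph{Dichotomies in Stability Theory}, Proposition~1 of Chapter~6). You correctly diagnose the typo: the smallness hypothesis must be on $\|A'(x)\|$, not on $\|A(x)\|$, since a uniformly small $A(x)$ cannot have eigenvalues with real parts bounded away from zero. The two mechanisms you outline --- (i) freezing plus roughness with patching, and (ii) the Riccati/graph-transform construction of the invariant bundles --- are both standard, and Coppel's presentation is closest to the second: one shows that the stable and unstable eigenspaces of $A(x)$, which vary slowly because $\|A'(x)\|$ is small, can be perturbed to genuine invariant subspaces of the evolution by solving a contraction in the space of bounded sections. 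Your identification of the patching bookkeeping as the weak point of route (i) and the Riccati argument as the clean alternative is accurate. Nothing is missing at the level of a proof outline; for a complete proof one would need the explicit contraction estimate, which is where the precise dependence of the admissible $\delta$ on $\alpha$, $\beta$, and $\sup_x\|A(x)\|$ enters.
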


Finally we recall Sturm--Liouville theorem on infinite interval.
A Sturm--Liouville operator $\mathcal{L}$ is a second order differential operator of the form
$$\mathcal{L}u:=\partial^2_{x}u+a_{1}(x)\partial_{x}u+a_{0}(x)u.$$
Consider the Sturm--Liouville operator $\mathcal{L}$  acting on $H^{2}(\mathbb{R})$ with smooth coefficients
$a_{0}(x)$ and $a_{1}(x)$, which decay exponentially to constants $a_{0}^{\pm},~a_{1}^{\pm}\in \mathbb{R}$ as $x\rightarrow \pm\infty.$ Recall that $H^{2}(\mathbb{R})$ is the Hilbert space formed by second order differentiable functions defined on $\mathbb R$ with its element $u$ having the norm $\|u\|_{H^2}:=\left(\sum\limits_{i=0}\limits^{2}\|\partial_{x}^{i}u\|_{L^{2}}^{2}\right)^{\frac{1}{2}}<\infty$.

The associated eigenvalue problem
$$\mathcal{L}u=\lambda u$$
has the following well--known properties, see e.g. \cite{2013book}.

\begin{theorem}\label{th.SL}$($\cite{2013book}$)$
Consider the eigenvalue problem $\mathcal{L}u=\lambda u$ on the space $H^{2}(\mathbb{R})$,
with the coefficients of $\mathcal L$ decaying exponentially to the constants $a_{0}^{\pm},~a_{1}^{\pm}\in \mathbb{R}$ as $x\rightarrow \pm\infty.$ The following statements hold.
\begin{itemize}
\item The point spectrum of $\mathcal L$, $\sigma_{p}(\mathcal{L})$, consists of a finite number, possibly zero, of simple eigenvalues, which can be enumerated in a strictly descending order
$$\lambda_{0}>\lambda_{1}>\cdots >\lambda_{N}>b:=\max\{a_{0}^{-},a_{0}^{+}\}.$$
\item For $j=0,1,\cdots,N$, the eigenfunction $u_{j}(x)$ associated with the eigenvalue $\lambda_{j}$ can be
normalized and it has {exactly} $j$ simple zeros.
\end{itemize}
\end{theorem}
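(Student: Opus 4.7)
The plan is to reduce the problem to a self--adjoint Schrödinger operator, locate the essential spectrum using the asymptotic matrices, and then exploit exponential dichotomy together with classical Sturm oscillation to count zeros and pin down simplicity. First I would multiply $\mathcal{L}u=\lambda u$ by the integrating factor $\rho(x):=\exp\bigl(\int_0^x a_1(y)\,dy\bigr)$, which is well defined and satisfies $\rho(x)\rho^{-1}(\pm\infty)\to 1$ exponentially because $a_1(x)\to a_1^\pm$ exponentially. Writing $v=\rho^{1/2}u$ and absorbing the first--order term one obtains an equivalent Schrödinger equation $-v''+q(x)v=\mu v$ with potential $q(x)$ tending exponentially to $q^\pm:=-a_0^\pm+\tfrac14(a_1^\pm)^2$ and a shifted spectral parameter $\mu=-\lambda+\tfrac14(a_1)^2$--type correction. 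This step is a bookkeeping computation; I would only record that the eigenvalue problem on $H^2(\mathbb R)$ is equivalent to a self--adjoint Schrödinger problem on $L^2(\mathbb R)$.

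Next I would identify the essential spectrum using Weyl's theorem: since $q(x)-q^\pm$ decays exponentially at $\pm\infty$, the essential spectrum is the union of the essential spectra of the two constant--coefficient limits, namely $[\min\{-q^+,-q^-\},\infty)$ in $\mu$, which translates back to $(-\infty,b]$ with $b=\max\{a_0^-,a_0^+\}$. Equivalently, for $\lambda>b$ the asymptotic coefficient matrix of the first--order companion system is hyperbolic with one stable and one unstable direction at each end, so by Theorem \ref{th.dichotomy} combined with Theorem \ref{th.dichotomy perturbation} (applied to the exponentially small perturbation from the limits) the companion system admits exponential dichotomies on $\mathbb R_\pm$. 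Then $\lambda$ is an eigenvalue precisely when the one--dimensional stable space at $+\infty$ and the one--dimensional unstable space at $-\infty$ intersect nontrivially, i.e.\ when a certain Evans--type Wronskian $W(\lambda)$ vanishes. Since both subspaces depend analytically on $\lambda$ in $\{\mathrm{Re}\,\lambda>b\}$ and $W(\lambda)\to$ the Wronskian of two linearly independent exponentials as $\lambda\to+\infty$, $W$ is a nonconstant analytic function and its zeros form a discrete, hence finite (by a tail estimate for large $\lambda$) set of real numbers. Simplicity of each eigenvalue follows from the standard identity $W'(\lambda_j)=\int_{\mathbb R}u_j^2\,dx\ne 0$, obtained by differentiating the eigenvalue equation in $\lambda$ and integrating against $u_j$.

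For the oscillation statement, I would invoke the Sturm comparison theorem on the Schrödinger form: eigenfunctions are real, and because they decay exponentially at $\pm\infty$ they have only finitely many zeros. A standard monotonicity argument (Prüfer angle) shows that the number of zeros is a nondecreasing function of $\lambda$ as $\lambda$ runs down through the eigenvalues, and that consecutive eigenvalues differ in zero count by exactly one. Normalizing so that the ground state $u_0$ (which can be taken strictly positive by a Perron--Frobenius type argument, since the resolvent is positivity improving for $\lambda$ just above the top of $\sigma_p$) has zero nodes, induction then gives that $u_j$ has exactly $j$ simple nodal zeros.

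The main obstacle I expect is not any single step but the clean enumeration and finiteness argument for the point spectrum above $b$. Dichotomy theory supplies the Evans--Wronskian and analyticity, and Sturm theory supplies node counting, but one still needs a quantitative upper tail estimate to rule out accumulation of eigenvalues at $+\infty$. The cleanest way is to observe that for $\lambda>b+\|q-q^\pm\|_\infty$ the shifted operator is strictly positive via a quadratic form estimate, so no eigenvalues can exceed this bound, and combined with the fact that the zeros of $W$ can only accumulate at the boundary $\{\mathrm{Re}\,\lambda=b\}$, this confines $\sigma_p(\mathcal L)\cap(b,\infty)$ to a finite set and yields the strictly descending enumeration $\lambda_0>\lambda_1>\cdots>\lambda_N>b$.
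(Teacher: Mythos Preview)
The paper does not give its own proof of Theorem \ref{th.SL}; the result is simply quoted from \cite{2013book} (Kapitula--Promislow) in Appendix B as background, so there is nothing to compare your argument against. Your sketch is a reasonable outline of the standard proof (Liouville transform to a self--adjoint Schr\"odinger operator, Weyl essential spectrum, Evans--Wronskian for simplicity, Pr\"ufer/Sturm oscillation for the node count), and it is essentially what one finds in the cited reference; there is no gap beyond the usual bookkeeping you already flag.
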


\section*{Acknowledgments}

The authors sincerely appreciate the referees for their nice comments and suggestions which greatly improve this paper both in mathematics and presentations.

This work is  partially supported by National Key R$\&$D Program of China grant
number 2022YFA1005900.

The authors are partially supported by National Natural Science Foundation (NNSF) of China grant numbers 12071284 and 12161131001. The second author is also partially supported by NNSF of China grant number 11871334, by Innovation Program of Shanghai Municipal Education Commission grant number 2021-01-07-00-02-E00087.

\vspace{0.1in}

\medskip


\medskip


\begin{thebibliography}{99}

\bibitem{AGJ} J. Alexander, R. Gardner, C.K.R.T. Jones, A topological invariant arising in the stability of travelling waves, \emph{J. Reine Angew. Math.} \textbf{410}  (1990)   167--212.

\bibitem{BCD} R. Bastiaansen, P. Carter, A. Doelman, Stable planar vegetation stripe patterns on sloped terrain in dryland ecosystems, \emph{Nonlinearity} \textbf{32}  (2019)   2759--2814.

\bibitem{Carpenter} G. Carpenter, A geometric approach to singular perturbation problems with applications to nerve impulse equations.  \emph{J. Differential Equations} \textbf{23} (1977) 335--367.

\bibitem{CRS} P. Carter, B. de Rijk,  B. Sandstede, Stability of traveling pulses with oscillatory tails in the FitzHugh-Nagumo system, \emph{J. Nonlinear Sci.} \textbf{26}  (2016)  1369--1444.

\bibitem{CS} P. Carter, B. Sandstede, Fast pulses with oscillatory tails in the FitzHugh-Nagumo system, \emph{SIAM J. Math. Anal.} \textbf{47} (2015) 3393--3441.

\bibitem{CG} C. Conley, R. Gardner, An application of the generalized Morse index to travelling wave solutions of a competitive reaction-diffusion model, \emph{Indiana Univ. Math. J.} \textbf{33} (1984)  319--343.

\bibitem{1965book} W.A Coppel, Stability and Asymptotic Behavior of Differential Equations. Heath, Boston (1965).

\bibitem{1978book} W.A Coppel, Dichotomies in stability theory, Lecture Notes in Mathematics, vol. 629, Springer-Verlag, Berlin Heidelberg (1978).

\bibitem{DGK01} A. Doelman, R.A. Gardner, T.J. Kaper, Large stable pulse solutions in reaction-diffusion equations, \emph{Indiana Univ. Math. J.} \textbf{50} (2001) 443--507.

\bibitem{DGK02} A. Doelman, R.A. Gardner, T.J. Kaper,  A stability index analysis of 1-D patterns of the Gray-Scott model, \emph{Mem. Amer. Math. Soc.} \textbf{155} (2002) xii+64 pp.

\bibitem{DHV} A. Doelman, G. Hek, N. Valkhoff, Stabilization by slow diffusion in a real Ginzburg-Landau system, \emph{J. Nonlinear Sci.} \textbf{14} (2004) 237--278.


\bibitem{Eszter} E.G. Eszter, An Evans function analysis of the stability of periodic travelling wave solutions of the FitzHugh-Nagumo system, Ph.D. thesis, University of Massachusetts (1999).


\bibitem{Evans1972III} J.W. Evans, Nerve axon equations: III. Stability of the nerve impulse, \emph{Indiana Univ. Math. J.} \textbf{22} (1972) 577--593.

\bibitem{Evans1975} J.W. Evans, Nerve axon equations: IV. The stable and the unstable impulse, \emph{Indiana Univ. Math. J.} \textbf{24} (1975) 1169--1190.

\bibitem{Fen71} N. Fenichel, Persistence and smoothness of invariant manifolds for ows, \emph{Indiana Univ. Math. J.} \textbf{21}  (1971) 193--226.

\bibitem{Fen74} N. Fenichel, Asymptotic stability with rate conditions, \emph{Indiana Univ. Math. J.} \textbf{23} (1974) 1109--1137.

\bibitem{Fen79} N. Fenichel, Geometric singular perturbation theory for ordinary differential equations, \emph{J. Differential Equations} \textbf{31} (1979) 53--98.


\bibitem{FitzHugh} R. FitzHugh, Impulses and physiological states in theoretical models of nerve membrane, \emph{Biophys. J.}  \textbf{1} (1961) 445--466.

\bibitem{Flores} G. Flores, Stability analysis for the slow travelling pulse of the FitzHugh-Nagumo system, \emph{SIAM J. Math. Anal.}  \textbf{22} (1991) 392--399.


\bibitem{GJ90} R.A. Gardner, C.K.R.T. Jones,  Traveling waves of a perturbed diffusion equation arising in a phase field model, \emph{Indiana Univ. Math. J.} \textbf{39} (1990) 1197--1222.

\bibitem{GJ91} R.A. Gardner, C.K.R.T. Jones, Stability of travelling wave solutions of diffusive predator-prey systems, \emph{Trans. Amer. Math. Soc.} \textbf{327} (1991) 465--524.

\bibitem{Hastings1976} S.P. Hastings, On the existence of homoclinic and periodic orbits for the FitzHugh-Nagumo equations, \emph{Quart. J. Math. Oxford Ser. (2)} \textbf{27} (1976)  123--134.

\bibitem{Hastings1982} S.P. Hastings, Single and multiple pulse waves for the Fitzhugh-Nagumo, \emph{ SIAM J. Appl. Math.} \textbf{42} (1982)  247--260.


\bibitem{HDK} M. Holzer, A. Doelman, T.J. Kaper, Existence and stability of traveling pulses in a Reaction-Diffusion-Mechanics system, \emph{J. Nonlinear Sci.} \textbf{23}  (2013)  129--177.

\bibitem{Jones1984}C.K.R.T. Jones, Stability of the travelling wave solution of the FitzHugh-Nagumo system, \emph{Trans. Amer. Math. Soc.} \textbf{286} (1984) 431--469.

\bibitem{Jones95} C.K.R.T. Jones, Geometrical singular perturbation theory. In: Johnson, R. (ed.) Dynamical Systems, Lecture Notes in Mathematics, vol. 1609, Springer, New York (1995).

\bibitem{2013book} T. Kapitula, K. Promislow, Spectral and Dynamical Stability of Nonlinear Waves, Applied Mathematical Sciences, vol. 185, Springer, New York (2013).

\bibitem{Langer} R. Langer, Existence of homoclinic travelling wave solutions to the FitzHugh-Nagumo equations, Ph.D. Thesis, Northeastern University (1980).

\bibitem{Levinson} N. Levinson, The asymptotic nature of solutions of linear systems of differential equations, \emph{Duke Math. J.} \textbf{15} (1948) 111--126.

\bibitem{Lin} X.B. Lin, Using Melnikov's method to solve Silnikov's problems, \emph{Proc. Roy. Soc. Edinburgh Sect. A} \textbf{116} (1990) 295--325.

\bibitem{Nagumoetal} J. Nagumo, S. Afimoto, S. Yoshizawa, An active pulse transmission line simulating nerve axon, \emph{Proc. IRE} \textbf{50} (1962) 2061--2070.

\bibitem{NP} M.P. Nash, A.V. Panfilov, Electromechanical model of excitable tissue to study reentrant cardiac arrhythmias, \emph{Prog. Biophys. Mol. Biol.} \textbf{85} (2004) 501--522.

\bibitem{NMIF} Y. Nishiura, M. Mimura, H. Ikeda, H. Fujii, Singular limit analysis of stability of traveling wave solutions in bistable reaction-diffusion systems, \emph{SIAM J. Math. Anal.} \textbf{ 21} (1990) 85--122.

\bibitem{NS} Y. Nishiura, H. Suzuki, Higher dimensional SLEP equation and applications to morphological stability in polymer problems, \emph{SIAM J. Math. Anal.} \textbf{36} (2004/05) 916--966.

\bibitem{P1984} K. Palmer, Exponential dichotomies and transversal homoclinic points, \emph{J. Differential Equations} \textbf{55} (1984) 225--256.
%
\bibitem{P1988} K. Palmer,  Exponential dichotomies and Fredholm operators, \emph{Proc. Amer. Math. Soc.} \textbf{104} (1988) 149--156.

\bibitem{PKN} A.V. Panfilov, R.H. Keldermann, M.P. Nash, Self-organized pacemakers in a coupled reaction-diffusion-mechanics system, \emph{Phys. Rev. Lett.} \textbf{95} (2005) 258104.

\bibitem{Sandstede1998} B. Sandstede, Stability of multiple-pulse solutions, \emph{Trans. Amer. Math. Soc.} \textbf{350} (1998) 429--472.

\bibitem{Sandstede} B. Sandstede, Stability of Travelling Waves In: Handbook of Dynamical Systems, vol. 2, pp. 983--1055. North-Holland, Amsterdam (2002)

\bibitem{SZ} J.H. Shen, X. Zhang, Traveling pulses in a coupled FitzHugh-Nagumo equation, \emph{Phys. D} \textbf{418}  (2021)  132848.



\bibitem{WZ} Y. Wu, X. Zhao, The existence and stability of travelling waves with transition layers for some singular cross-diffusion systems, \emph{Phys. D} \textbf{200} (2005)  325--358.


\bibitem{Yanagida} E. Yanagida, Stability of fast travelling pulse solutions of the FitzHugh-Nagumo equations. \emph{J. Math. Biol.} \textbf{22} (1985) 81--104.


\end{thebibliography}
\end{document}